\crefname{equation}{}{}
\newenvironment{proofof}[1]{\par
  \pushQED{\qed}%
  \normalfont \topsep6\p@\@plus6\p@\relax
  \trivlist
  \item[\hskip\labelsep
        \bfseries
    Proof of #1\@addpunct{.}]\ignorespaces
}{%
  \popQED\endtrivlist\@endpefalse
}
\newcommand{\ve}{\varepsilon}
\newcommand{\R}{\mathbb{R}}
\newcommand{\N}{\mathbb{N}}
\newcommand{\Cinf}{\mathcal{C}^{\infty}}
\newcommand{\Cl}{\mathcal{C}^{\ell}}
\newcommand{\M}{\mathcal{M}}
\newcommand{\Sen}{\Sigma^{\en}}
\newcommand{\Wc}{\mathcal{W}^{^C}}
\newcommand{\eq}[1]{ 
\begin{equation}
  \begin{split}
    #1 
  \end{split}
\end{equation}
}
\newcommand{\parc}[1]{\dfrac{\partial}{\partial #1}} 
\newcommand{\parcs}[2]{\dfrac{\partial #1}{\partial #2}}
\newcommand{\tparc}[1]{\tfrac{\partial}{\partial #1}} 
\newcommand{\tparcs}[2]{\tfrac{\partial #1}{\partial #2}}  
\newtheorem{remark}{Remark}[section] 
\newtheorem{definition}{Definition}[section] 
\newtheorem{theorem}{Theorem}[section] 
\newtheorem{lemma}{Lemma}[section] 
\newtheorem{proposition}{Proposition}[section] 
\newtheorem{corollary}{Corollary}[section]
\renewcommand{\H}{\mathcal{H}}
\renewcommand{\r}{\rangle}
\DeclareMathOperator{\en}{en}
\DeclareMathOperator{\ex}{ex}
\DeclareMathOperator{\inn}{inner}
\DeclareMathOperator{\Graph}{Graph}
\DeclareMathOperator{\Supp}{Supp}
\DeclareMathOperator{\Div}{div}
\DeclareMathOperator{\Id}{Id}
\begin{document}

\begin{frontmatter}
  \title{Analysis of a slow-fast system near a cusp singularity}
  \author[jbi,cor1]{H. Jardón-Kojakhmetov}
  \ead{h.jardon.kojakhmetov@rug.nl}
  \author[jbi]{Henk. W. Broer}
  \ead{h.w.broer@rug.nl}
  \author[bur]{R. Roussarie}
  \ead{Robert.Roussarie@u-bourgogne.fr}
  \address[jbi]{Johann Bernoulli Institute for Mathematics and Computer Science, University of Groningen, P.O. Box 407, 9700 AK, Groningen, The Netherlands.}
  \address[bur]{Institut de Mathématique de Bourgogne, U.M.R. 5584 du C.N.R.S., Université de Bourgogne, B.P. 47 870, 21078 Dijon Cedex, France. }
  \cortext[cor1]{Corresponding author}
  \begin{abstract}
    This paper studies a slow-fast system whose principal characteristic is that the slow manifold is given by the critical set of the cusp catastrophe. Our analysis consists of two main parts: first, we recall a formal normal form suitable for systems as the one studied here; afterwards, taking advantage of this normal form, we investigate the transition near the cusp singularity by means of the blow up technique. Our contribution relies heavily in the usage of normal form theory, allowing us to refine previous results.
  \end{abstract}

\end{frontmatter}

\tableofcontents

\section{Introduction}\label{sec:intro}

A \emph{slow-fast system} (SFS) is a singularly perturbed ordinary differential equation of the form
\eq{\label{intro:sf1}
  \dot x &= f(x,z,\ve)\\
  \ve\dot z &= g(x,z,\ve),
}

where $x\in\R^m$, $z\in\R^n$ are local coordinates and where $\ve>0$ is a small parameter. The over-dot denotes the derivative with respect to the time parameter $t$. Throughout this text, we assume that the functions $f$ and $g$ are of class $\Cinf$. In applications (e.g \cite{Zeeman1}), $z(t)$ represents states or measurable quantities of a process while $x(t)$ stands for control parameters. The parameter $\ve$ models the difference of the rates of change between the variables $z$ and $x$. That is why systems like \cref{intro:sf1} are often used to model phenomena with two time scales. Observe that the smaller $\ve$ is, the faster $z$ evolves with respect to $x$. Therefore we refer to $x$ (resp. $z$) as the \emph{slow} (resp. \emph{fast}) variable. The time parameter $t$ is known as the \emph{slow time}. For $\ve\neq 0$, we can define a new time parameter $\tau$ by the relation $t=\ve\tau$. With this time reparametrization \cref{intro:sf1} can be written as
\eq{\label{intro:sf2}
x' &= \ve f(x, z,\ve)\\
z' &= g(x,z,\ve),
}

where now the prime denotes the derivative with respect to the rescaled time parameter $\tau$, which we call \emph{the fast time}. Since we consider only autonomous systems, we often omit to indicate the time dependence of the variables. In the rest of this document, we prefer to work with slow-fast systems presented as \cref{intro:sf2}.\smallskip

Observe that as long as $\ve\neq 0$ and $f$ is not identically zero, systems \cref{intro:sf1} and \cref{intro:sf2} are equivalent. A first approach to understand the qualitative behavior of slow-fast systems is to study the limit $\ve\to0$. The slow equation \cref{intro:sf1} restricted to $\ve=0$ reads as
\eq{\label{intro:cde1}
\dot x &= f(x,z,0)\\
0 &= g(x,z,0).
}

A system of the form \cref{intro:cde1} is called \emph{constrained differential equation} (CDE) \cite{Jardon1,Takens1}. On the other hand, in the limit $\ve\to0$, a system given by \cref{intro:sf2} becomes
\eq{\label{intro:layer}
x' &= 0\\
z' &= g(x,z,0),
}

which is called \emph{the layer equation}. Associated to both systems, \cref{intro:cde1} and \cref{intro:layer}, the slow manifold $S$ is defined by
\eq{
S=\left\{ (x,z)\in\R^m\times\R^n\, | \, g(x,z,0)=0 \right\},
}
which serves as the phase space of the CDE \cref{intro:cde1} and as the set of equilibrium points of the layer equation \cref{intro:layer}. In the latter context, it is useful to recall the concept of Normally Hyperbolic Invariant Manifold (NHIM).

\begin{definition}[Normally Hyperbolic Invariant Manifold] Consider a slow-fast system given by a vector field of the form
\eq{X_\ve=\ve f(x,z,\ve)\parc{x}+g(x,z,\ve)\parc{z}.
}

The associated slow (invariant) manifold $S=\left\{ g(x,z,0)=0\right\}$ is said to be normally hyperbolic if each point of $S$ is a hyperbolic equilibrium point of $X_0$.
\end{definition} 

NHIMs are relevant in the context of the geometric study of slow-fast systems, see for example \cite{Fenichel}. It is known that compact NHIMs persist under $\mathcal C^1$ small perturbation of the vector field \cite{Jones,Kaper}. In the particular context presented above, a normally hyperbolic compact subset of the slow manifold $S$  persists as an invariant manifold of the slow-fast system $X_\ve$. We show in \cref{fig:intro1}  a schematic of the previous description.

\begin{figure}[htbp]\centering
  \includegraphics{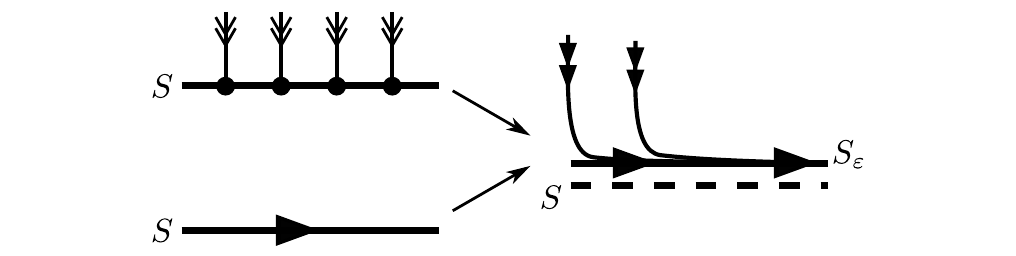}
  \caption{A schematic representation of the persistence of a NHIM under the perturbation of the corresponding vector field. $S$ denotes the slow manifold. Left-above: $S$ is a set of hyperbolic equilibrium points of the layer equation. Left-below: $S$ is the phase space of the constrained equation. Right: since $S$ is a NHIM, it persists as an invariant manifold $S_\ve$ under small perturbations of the vector field. }
  \label{fig:intro1}
\end{figure}

After this intruduction, we turn into the subject of this paper. Our goal is to understand the dynamics of a particular slow-fast system which has one fast and two slow variables given as

\eq{\label{intro:eqcusp}
X_\ve=\ve(1+\ve f_1)\parc{x_1}+\ve^2f_2\parc{x_2}-\left( z^3+x_2z+x_1+\ve f_3 \right)\parc{z},
}

where the functions $f_i=f_i(x_1,x_2,z)$, for $i=1,2,3$, are smooth and vanish at the origin. The corresponding slow manifold is defined by
\eq{
  S=\left\{ (x_1,x_2,z)\in\R^3 \,| \, z^3+x_2z+x_1=0  \right\}.
}

\begin{remark}
  The slow manifold $S$ can be regarded as the critical set of the cusp (or $A_3$) catastrophe, which is given as \rm\cite{Arnold_singularities,Brocker}\eq{\label{eqV}
  V(x_1,x_2,z)=\frac{1}{4}z^4+\frac{1}{2}x_2z^2+x_1z.
  }
\end{remark}

 We denote by $\Delta$ the set of points in $S$ at which $S$ is tangent to the fast direction, that is
 \eq{\label{eqD}
 \Delta=\left\{ (x_2,z)\in S \, | \, 3z^2+x_2=0\right\}.
 }

In other words, $\Delta$ is the set of degenerate critical points of \cref{eqV}. See figure \cref{fig:qual1} for a description of the slow manifold and the set $\Delta$.

\begin{figure}[htbp]\centering
  \begin{tikzpicture}
    \pgftext{\includegraphics{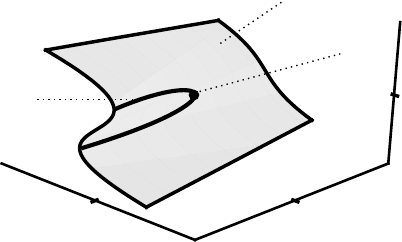}}
    \node at (1,1.4) {$S$};
    \node at (-2,0.25) {$\Delta$};
    \node at (1.6,.75) {$C$};
    \node at (2.2,.25) {$z$};
    \node at (1.2,-1) {$x_2$};
    \node at (-1.2,-1) {$x_1$};
  \end{tikzpicture}
  \caption{The manifold $S$ is two dimensional and can be defined as the critical set of the potential function $V(x_1,x_2,z)=\tfrac{1}{4}z^4 + \tfrac{1}{2}x_2z^2+x_1z$. The curve $\Delta$ is defined by the set of degenerate critical points of $V$. Geometrically, $B$ is the set of point of $S$ where $S$ is tangent to the fast direction, and $C$ denotes the cusp point.}
  \label{fig:qual1}
\end{figure}

Our interest in studying \cref{intro:eqcusp} is due to the fact that the origin $(x_1,x_2,z)=(0,0,0)$ is a \emph{non-hyperbolic equilibrium point} of $X_0$. This implies that a compact subset, around the origin, of the slow manifold $S$ is not a NHIM of $X_0$, and therefore, the Geometric Singular Perturbation Theory \cite{Fenichel,Jones,Kaper} is not enough.

\subsection{Motivation}

There have been several studies, e.g. \cite{Krupa3,Krupa20102841}, dealing with a SFS of the form
\eq{
X_\ve=\ve(1+f_1)\parc{x_1}-\left( z^2+x_1+ \ve h \right)\parc{z},
} 

whose slow manifold is the critical set of the fold catastrophe. The next natural step is to consider the following case in the Thom list \cite{Stewart1}, i.e., a slow-fast system induced by the cusp catastrophe. That is
\eq{\label{mot:eqcusp}
X_\ve=\ve(1+f_1)\parc{x_1}+\ve f_2\parc{x_2}-\left( z^3+x_2z+x_1+\ve f_3 \right)\parc{z}.
}

In \cite{BKK}, the system \cref{mot:eqcusp} is studied in a qualitative way. Here, however, we aim to refine the results by heavily using techniques from normal form theory. Moreover, we remark that the methods presented here are applicable to a larger class of slow-fast system given by
\eq{
  X_\ve=\ve(1+f_1)\parc{x_1}+\sum_{i=2}^{k-1}\ve f_i\parc{x_i}-\left( z^k+\sum_{j=1}^{k-1}x_jz^{j-1}-\ve f_k \right)\parc{z},
}

which is called (regular) $A_k$-SFS, see \cite{JardonThesis}.

\subsection{Statement}

We shall study the SFS

\eq{\label{s1}
  X_\ve=\ve (1+f_1)\parc{x_1}+\ve f_2\parc{x_2}-\left( z^3+x_2z+x_1 + \ve f_3 \right)\parc{z},
}

where the functions $f_i=f_i(x_1,x_2,z,\ve)$ are smooth. To avoid working with an $\ve$-parameter family of vector fields as \cref{s1}, it is customary to extend \cref{s1} by adding the trivial equation $\ve'=0$, and thus consider a smooth vector field in $\R^4$ which reads as
\eq{\label{s2}
  X=\ve (1+ f_1)\parc{x_1}+\ve f_2\parc{x_2}-\left( z^3+x_2z+x_1 + \ve f_3 \right)\parc{z} + 0\parc{\ve}.
}

We regard \cref{s2} as a perturbation of ``the principal part'' $F$ which is given as
\eq{\label{s3}
  F=\ve \parc{x_1}+0\parc{x_2}-\left( z^3+x_2z+x_1  \right)\parc{z} + 0\parc{\ve}.
}

 Note that in a qualitative sense, $F$ contains the essential elements of $X$. To state our main result, we first define the sections

\eq{
  \Sigma^-=\left\{ (x_1,x_2,z,\ve)\in\R^4\, |\, x_1=-x_1^{i}  \right\}\\
  \Sigma^-=\left\{ (x_1,x_2,z,\ve)\in\R^4\, |\, x_1=x_1^{f}  \right\},
} 

where $x_1^{i}>0$ and $x_1^{f}>0$ are arbitrarily large constants. For $\ve>0$ but sufficiently small, the sections $\Sigma^{-}$ and $\Sigma^+$ are transversal to the flow of $X_\ve$. Next, let $\Pi:\Sigma^{-}\to\Sigma^{+}$ be the Poincaré map induced by the flow of $X_\ve$. We shall prove the following.

\paragraph{{\bfseries{Transition along the cusp}} ({\rm see \cref{teo:main}})} {
Consider a slow-fast system given by \cref{s2}. Let $\Sigma^-$, $\Sigma^+$ and $\Pi:\Sigma^-\to\Sigma^+$ be defined as above. Then, we can choose coordinates in $\Sigma^-$ and in $\Sigma^+$ such that the map $\Pi$ reads as
\eq{
\Pi(X_2,Z,\ve)=(\tilde X_2,\tilde Z,\tilde \ve),
}

where $\tilde X_2=X_2+H(X_2,\ve)$ (with $H$ flat at $(X_2,\ve)=(0,0)$), $\tilde\ve=\ve$ and where

\eq{\label{s4}
  \tilde Z=\Phi(X_2,\ve)+Z\exp\left( -\frac{1}{\ve}(A(X_2,\ve)+\ve\Psi(X_2,Z,\ve)) \right),
}

where $A(X_2,0)>0$. Details of the functions $\Phi$, $A$, and $\Psi$ are given in \cref{teo:main}. In an heuristic way, this result is described in \cref{figcusp}.

\begin{figure}\centering
  \begin{tikzpicture}
    \pgftext{\includegraphics[scale=1.5]{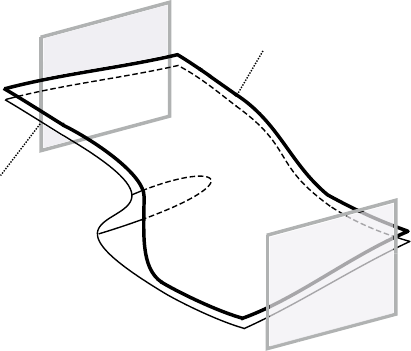}}
    \node at (1,2) {$S_\ve$};
    \node at (-3.3,-0.1) {$S$};
    \node at (-2,2.5) {$\Sigma^-$};
    \node at (3,-0.1) {$\Sigma^+$};
  \end{tikzpicture}
  \caption{Description of our main result. We may choose appropriate coordinates at the sections $\Sigma^-$ and $\Sigma^+$ under which the invariant manifold $S_\ve$ is given by $Z=0$. Moreover form \cref{s3} we have that all other trajectories starting at $\Sigma^-$ are exponentially attracted to the invariant manifold $S_\ve$. In this paper we provide quantitative information regarding this exponential contraction.}
  \label{figcusp}
\end{figure}

}

\subsection{Idea of the proof}

Our proof consists of two main steps.

\begin{enumerate}
  \item From \cite{Jardon2}, it is known that there exists a formal transformation bringing \cref{s2} into 
\eq{\label{s5}
  F=\ve \parc{x_1}+0\parc{x_2}-\left( z^3+x_2z+x_1  \right)\parc{z} + 0\parc{\ve}.
}

Then, by Borel's lemma \cite{Brocker}, the vector field $F$ can be realized as a smooth normal form $X^N=F+R$ of \cref{s2} and where $R$ is flat at $(x_1,x_2,z,\ve)=(0,0,0,0)$. See more details in \cref{sec:formal_nf}.

\item Based the previous normalization, next we use the geometric desingularization or blow up method (as introduced in \cite{DumRou2}) to study the flow of the normal form $X^N=F+R$. This is detailed in \cref{sec:GeomDes}.
\end{enumerate}

\begin{remark} With this document we aim at two goals:

\begin{enumerate}
  \item To refine the results of \cite{BKK}. This is, we do not only provide a qualitative description of the transition $\Pi$, but details on the differentiability of such a map is also presented.
  \item To prepare a framework for the geometric desingularization of $A_k$ slow-fast systems. These are a generalization of \cref{s2} given as
  \eq{
  X=\ve(1+ f_1)\parc{x_1} + \sum_{i=1}^{k-1}\ve f_i\parc{x_i} - \left( z^k+\sum_{j=1}^{k-1}x_jz^{j-1}+\ve f_k \right)\parc{z}+0\parc{\ve}.
  }
\end{enumerate}
  
\end{remark}

The rest of this document is arranged as follows: in \cref{sec:preliminaries} we provide a brief recollection of preliminary results that will simplify our later studies. Next, in \cref{sec:GeomDes} we pose our result and prove it by means of the geometric desingularization method and the results of  \cref{sec:preliminaries}. For readability purposes, many technicalities have been put in the appendix.

\section{Preliminaries of slow-fast systems}\label{sec:preliminaries}
In this section, we provide a number preliminary results that will be used later in \cref{sec:GeomDes}. First of all, we consider slow-fast systems along normally hyperbolic regions of the slow manifold. Afterwards, we recall a result from \cite{Jardon2} dealing with the normal form of \cref{s2}. We remark that we only consider SFS with one fast variable. Let us be more precise with the type of SFS that we shall study first.

\begin{definition} A slow-fast system is said to be (locally) regular around a point $p_0$, if its corresponding slow manifold is normally hyperbolic in a some neighborhood of $p_0$.
\end{definition}

\subsection{The slow vector field}

Let us consider a slow-fast system given by 
\eq{\label{sdi1}
  X_\ve = \sum_{i=1}^m \ve f_i(x,z,\ve)\parc{x_i}+H(x,z,\ve)\parc{z},
}

where $x\in\R^m$, $z\in\R$, and as usual $0<\ve\ll 1$. Furthermore, assume that $f(0,0,0)\neq 0$, $H(0,0,0)=0$ and $\tparcs{H}{z}(0,0,0)<0$. Thus $X_\ve$ is regular around $0\in\R^{m+2}$. The slow manifold associated to \cref{sdi1} is defined by 
\eq{
  S=\left\{ (x,z)\in\R^{m+1}\, | \, H(x,z,0)=0 \right\}.
}

From the defining assumptions of \cref{sdi1}, we have that $S$ is a NHIM in a neighborhood of the origin. By looking at the Jacobian of $X_\ve$ at $0$, it follows that there exists an $m+1$ dimensional a center manifold. Since $X$ is smooth, we can choose a $\Cl$  center manifold $\Wc$ for any $\ell<\infty$. The manifold $\Wc$ is given as a graph $z=\phi(x,\ve)$ where $\phi$ is a $\Cl$ function.

\begin{remark} Along the rest of the document we frequently make use of a finite class of differentiability. As it is customary in the present context, when we say that a manifold (or a map) is $\Cl$, we mean that such a manifold (or map) is $\ell$-differentiable for $\ell$ as large as necessary.
\end{remark}

The slow manifold $S$ is naturally given by the restriction $\Wc|_{\ve=0}=S$. Next, let us consider the vector field $\frac{1}{\ve}X_\ve(x,\phi,\ve)$. Since $\Wc$ is locally invariant, it follows that $\frac{1}{\ve}X_\ve$ is tangent to $\Wc$. Therefore the vector field
\eq{
  X^{slow}=\lim_{\ve\to 0}\frac{1}{\ve}X_\ve(x,\phi,\ve),
}

is tangent to $S$ at each point of $S$, and we call it \emph{the slow vector field}. We remark that the slow vector field $X^{slow}$ is only well defined whenever $\phi$ is invertible. 

\subsubsection{The slow divergence integral}\label{sec:sdi}

Associated to a regular slow-fast system and the corresponding slow vector field, the \emph{slow divergence integral} is defined here. For this, let $\Sigma^-$ and $\Sigma^+$ be two sections which are transversal to the flow of $X_\ve$ given by \cref{sdi1}. For $\ve\neq 0$ but sufficiently small, these sections are also transversal to the slow manifold $S$. Let $\gamma_\ve$ be a solution curve of $X_\ve$ chosen along a center manifold $\Wc$, thus $\gamma_\ve$ is transversal to the sections $\Sigma^-$ and $\Sigma^+$. In the limit $\ve=0$, the curve $\gamma_0$ is a curve along the slow manifold $S$. The idea now is to borrow the well-known divergence theorem \cite{Spivak} to get some sense on how the trajectories of $X_\ve$ are attracted to $S$ (recall that we made the assumption $\tparcs{H}{z}<0$). The divergence of $X_\ve$ (given by \cref{sdi1}) reads as
\eq{
  \Div X_\ve &= \parcs{H(x,z,\ve)}{z}+O(\ve).
} 

We can now take the integral of $\Div X_\ve$ along the orbit $\gamma_\ve$ of $X_\ve$ parametrized by the fast time $\tau$, we have
\eq{\label{sdi2}
  \int_{\gamma_\ve}\Div X_\ve  \, d\tau=\int_{\gamma_\ve} \left(  \parcs{H(x,z,\ve)}{z}+O(\ve) \right)d\tau.
}

The \emph{slow divergence integral} is defined by
\eq{
  I(t)=\int_{\gamma_0} \Div X_0 \, dt,
}

where $t$ is the slow time defined by the slow vector field $X^{slow}$. Our goal then is to relate the divergence integral \cref{sdi2} with $I$.

\begin{proposition} Under the assumptions made in this section, we have that
\eq{
\int_{\gamma_\ve}\Div X_\ve \, d \tau=\frac{1}{\ve}\left( I(t)+o(1)\right),
}

where $I(t)$ is the slow divergence integral.
  
\end{proposition}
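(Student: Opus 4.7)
The strategy is to evaluate both sides by parametrizing $\gamma_\ve$ with the slow time $t=\ve\tau$, exploit the smoothness of the vector field restricted to the center manifold $\Wc$, and pass to the limit $\ve\to 0$ inside the integral.

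First, I would restrict the analysis to the locally invariant center manifold $\Wc = \{z=\phi(x,\ve)\}$, along which $\gamma_\ve$ lies by hypothesis. In the $x$-chart, the restriction of $X_\ve$ to $\Wc$ reads $\sum_{i=1}^{m}\ve f_i(x,\phi(x,\ve),\ve)\,\partial_{x_i}$, so the desingularized vector field $\frac{1}{\ve}X_\ve|_{\Wc}$ extends smoothly to $\ve=0$ and coincides there with $X^{slow}$. By continuous dependence on parameters, the solution $x_\ve(t)$ of the slow equation $\dot x = f(x,\phi(x,\ve),\ve)$ converges $\Cl$-uniformly to $x_0(t)$ on any fixed compact slow-time interval $[t_0,t_1]$ for which $\gamma_0$ stays in the normally hyperbolic region.

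Second, I would perform the change of variables $t=\ve\tau$ in the integral on the left-hand side. Since $d\tau=dt/\ve$ and $\gamma_\ve$ restricted to $\Wc$ is parametrized by $t\mapsto (x_\ve(t),\phi(x_\ve(t),\ve))$, we obtain
\eqn{
\int_{\gamma_\ve}\Div X_\ve\,d\tau
= \frac{1}{\ve}\int_{t_0}^{t_1}\!\Big(\tparcs{H}{z}(x_\ve(t),\phi(x_\ve(t),\ve),\ve)+O(\ve)\Big)\,dt.
}
Using the smoothness of $\partial_z H$ and of $\phi$, the integrand depends continuously on $\ve$ uniformly in $t\in[t_0,t_1]$, and its value at $\ve=0$ is precisely $\Div X_0$ evaluated at $(x_0(t),\phi(x_0(t),0))\in S$. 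Uniform convergence therefore allows us to pass to the limit under the integral sign, yielding
\eqn{
\int_{t_0}^{t_1}\!\Big(\tparcs{H}{z}(x_\ve,\phi(x_\ve,\ve),\ve)+O(\ve)\Big)\,dt
= \int_{t_0}^{t_1}\!\tparcs{H}{z}(x_0,\phi(x_0,0),0)\,dt+o(1) = I(t)+o(1).
}
Combining the two displays gives exactly the claimed formula.

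The main technical point is to ensure that the parametrization by slow time is legitimate on the whole arc $\gamma_0$, which requires that $X^{slow}$ does not vanish along this arc (otherwise $\phi$ would fail to be invertible in the sense of the slow parametrization, as noted in the text above the proposition); this is guaranteed by the transversality of $\Sigma^{\pm}$ to both the fast flow and to $S$. Once that is in place, the remainder of the argument is a standard application of smooth dependence on parameters plus Lebesgue dominated convergence on the compact slow-time interval, so the $o(1)$ error is automatic and uniform in any chosen center manifold of class $\Cl$ with $\ell$ sufficiently large.
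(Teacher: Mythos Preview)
Your proof is correct and follows essentially the same strategy as the paper: both arguments pass to a slow parametrization, extract the factor $1/\ve$, and use continuity in $\ve$ of the integrand along the center manifold to identify the leading term as the slow divergence integral. The only cosmetic difference is that the paper parametrizes $\gamma_\ve$ by the coordinate $x_1$ (so that the integration limits $x_1^-,x_1^+$ are exactly the constants defining $\Sigma^\pm$ and do not move with $\ve$), whereas you parametrize by the slow time $t$; in your version the endpoints $t_0,t_1$ actually depend on $\ve$ through the hitting times of $\Sigma^\pm$, but by the same smooth-dependence argument they converge as $\ve\to 0$ and contribute only a further $o(1)$ term, so nothing is lost.
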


\begin{proof}
  Recall that the slow vector field reads as $X^{slow}=\lim_{\ve\to 0}\frac{1}{\ve}X_\ve(x,\phi,\ve)$, where $\phi=\phi(x,\ve)$ is a $\Cl$ function. By our assumptions, the curve $\gamma_\ve$ is transversal to the sections $\Sigma^-$ and $\Sigma^+$ for $\ve$ small enough. Without loss of generality we can assume that $\gamma_\ve$ is parametrized by $x_1$. Then let $x_1^-$ and $x_1^+$ be defined by $\gamma_\ve(x_1^{-})=\gamma_\ve\cap\Sigma^-$ and $\gamma_\ve(x_1^{+})=\gamma_\ve\cap\Sigma^+$. Next,  the integral of the divergence of $X_\ve$ along $\gamma_\ve$ from $\Sigma^-$ to $\Sigma^+$ reads as

  \eq{
  \int_{\gamma_\ve}\Div X_\ve  \,  d\tau &=\frac{1}{\ve}\int_{x_1^-}^{x_1^+} \left(\parcs{H(x,z,0)}{z}+O(\ve)\right) \frac{dx_1}{f_1(x,z,0)+o(1)}\\
                    &=\frac{1}{\ve} \left(\int_{x_1^-}^{x_1^+} \parcs{H(x,z,0)}{z}\frac{dx_1}{f_1(x,z,0)} + o(1)\right)\\
                    &=\frac{1}{\ve} \left(\int_{\gamma_0}\Div X_0\, dt+o(1)\right),
  }

  where $t$ is the slow time induced by $X^{slow}$, which in coordinates means that $\frac{dx_1}{dt}=f_1$.
\end{proof}

Observe that the slow divergence integral is a first order approximation of the divergence along orbits of $X_\ve$. This will be useful when presenting our main result in \cref{sec:GeomDes}.


\subsubsection{Normal form and transition of a regular slow-fast system}\label{sec:nf_reg}

Now we consider the problem of finding a suitable normal form of a regular SFS. The following is a well-known result but we recall it here for completeness.

\begin{proposition}\label{prop:nf_reg}
  Consider a regular slow-fast system on $\R^{m+3}$ given by 
\eq{\label{eqr}
  X_\ve=\ve(1+ f_1)\parc{u}+\sum_{j=1}^m\ve g_j\parc{v_j}+H\parc{z},
}
where $(u,v_1,\ldots,v_m,z,\ve)\in\R^{m+3}$; where the functions $f_1=f_1(u,v,z,\ve)$ and $g_j=g_j(u,v,z,\ve)$, for $2\geq j\geq k-1$, are smooth and where the function $H=H(u,v,z,\ve)$ is smooth with $H(0,0,0,0)=0$ and $\tparcs{H}{z}(0,0,0,0)<0$. Then, the vector field $X$ is $\Cl$-equivalent to a normal form given by
\eq{
  X_{\ve}^N=\ve\parc{U} + \sum_{j=1}^m 0\parc{V_j}-Z\parc{Z},
}

where $\left\{ Z=0\right\}$ corresponds to a choice of the center manifold $\Wc$ of $X_\ve$.
\end{proposition}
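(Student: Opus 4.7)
The plan is to reach the stated normal form via three successive $\Cl$ changes of coordinates. First, since $\tparcs{H}{z}(0,0,0,0)<0$, the vector field admits a $\Cl$ attracting center manifold $\Wc=\{z=\phi(u,v,\ve)\}$. The substitution $Z=z-\phi(u,v,\ve)$ sends $\Wc$ to $\{Z=0\}$, and invariance forces the $z$-component of $X_\ve$ to factor through $Z$, giving
\[
X_\ve=\ve(1+\bar f_1)\parc{u}+\sum_{j=1}^m\ve\bar g_j\parc{v_j}+Z\,\bar H(u,v,Z,\ve)\parc{Z},
\]
with $\bar H(0,0,0,0)<0$.

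Next, at $Z=0$ the slow vector field is nonsingular (it equals $\parc{u}$ at the origin), so the flow-box theorem with parameter $\ve$ supplies $\Cl$ coordinates $(U,V_1,\ldots,V_m)$ on $\Wc$ in which it reads $\parc{U}$. Fenichel's theorem further provides a $\Cl$ invariant strong-stable foliation $\mathcal{F}^s$ of $\Wc$; extending $(U,V)$ off $\Wc$ as constants along the leaves of $\mathcal{F}^s$ kills the residual $Z$-dependence in the slow components and yields
\[
X_\ve=\ve\parc{U}+\sum_{j=1}^m 0\cdot\parc{V_j}+Z\,\mu(U,V,Z,\ve)\parc{Z},
\]
with $\mu(0,0,0,0)<0$. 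Finally, a $Z$-rescaling $\tilde Z=Z\beta(U,V,Z,\ve)$ with $\beta(0)=1$, obtained order-by-order in $Z$ from the cohomological equation $\ve\partial_U\beta+Z\mu\partial_Z\beta=\beta(\mu_0-\mu)$ with $\mu_0:=\mu|_{Z=0}$, removes the $Z$-dependence of the fast eigenvalue, producing $\dot{\tilde Z}=\mu_0(U,V,\ve)\tilde Z$. A positive time reparametrization $ds=-\mu_0\,dt$ combined with the $\Cl$ change $U\mapsto\tilde U$ determined by $\partial U/\partial\tilde U=-1/\mu_0(U,V,\ve)$ then produces $X_\ve^N$; crucially, since $\mu_0$ no longer depends on $\tilde Z$, this last step preserves the $V$- and $\tilde Z$-equations.

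The main technical difficulty lies in the second step, namely constructing the $\Cl$ invariant strong-stable foliation $\mathcal{F}^s$ with smooth dependence on the parameter $\ve$, which is the content of Fenichel's refined theorem for NHIMs; this is also why the equivalence is claimed only in class $\Cl$ for $\ell$ arbitrarily large but finite, rather than $\Cinf$.
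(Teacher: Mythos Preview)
Your overall architecture is sensible and close in spirit to the paper's, but the two proofs diverge precisely at the hardest step, and yours has a gap there.

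The paper proceeds by (i) dividing by $1+f_1$, (ii) invoking the Takens--Bonckaert normal form for partially hyperbolic fixed points to obtain, in one stroke, a $\Cl$ conjugacy to a vector field in which \emph{both} the center components $\bar G_j$ and the fast coefficient $\bar H$ are independent of $Z$, (iii) dividing by $|\bar H|$, and (iv) flow-boxing the restriction to $\{Z=0\}$. Your Steps~1, 2 and 4 together reproduce (i), (iii), (iv) and the ``center part'' of (ii): the Fenichel strong-stable foliation is exactly what makes the $U,V$-equations $Z$-independent, and this is fine.

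The gap is your Step~3. Solving the cohomological equation
\[
\ve\,\partial_U\beta+Z\mu\,\partial_Z\beta=(\mu_0-\mu)\beta
\]
``order-by-order in $Z$'' produces only a \emph{formal} power series $\beta=\sum_{k\ge 0}\beta_k(U,V,\ve)Z^k$, where each coefficient $\beta_k$ is determined by an equation of the form $\ve\,\partial_U\beta_k+k\mu_0\beta_k=F_k$. You do not explain why this formal series can be realised by a genuine $\Cl$ function of $(U,V,Z,\ve)$; a Borel summation in $Z$ leaves a remainder flat along $\{Z=0\}$, and killing that flat remainder uniformly in $\ve$ is itself a nontrivial normal-form statement. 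In fact, obtaining a $\Cl$ coordinate change that makes the hyperbolic coefficient independent of the hyperbolic variable is precisely the content of the Takens--Bonckaert theorem that the paper cites as a black box. So your attempt to replace that citation by a hands-on construction stops exactly where the real work begins.

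If you want to keep your more geometric route, you should either cite Takens--Bonckaert (or an equivalent $\Cl$ linearisation result for a one-dimensional hyperbolic direction with parameters) at Step~3, or supply the missing argument: Borel-realise the formal $\beta$, then show that the residual term flat in $Z$ can be removed by a further $\Cl$ change---which again is a Takens-type statement.
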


\begin{proofof}{\cref{prop:nf_reg}} 
The first step is to divide the vector field $X$ by $1+f_1$. In a sufficiently small neighborhood of the origin this is a smooth equivalence relation. That is $Y=\tfrac{1}{1+f_1}X$ reads as
\eq{
Y= \ve\parc{u}+\sum_{j=1}^m\ve \tilde g_j\parc{v_j}+\tilde H\parc{z},
}

where $\tilde g_j$, for $2\geq j\geq k-1$, and $\tilde H$ are smooth with $\tilde H(0)=0$ and $\tparcs{\tilde H}{z}(0)<0$. Now we note that the origin of $\R^{m+3}$ is a semyhyperbolic equilibrium point with $(u,v,\ve)$ being center coordinates and $z$ being the hyperbolic coordinate. We can now use Takens-Bonckaert results on normal forms of partially hyperbolic vector fields \cite{Bonckaert1,Bonckaert2,Takens_partially}. Thus, there exists a $\Cl$ change of coordinates (maybe respecting some constraints if required) under which $Y$ is conjugated to
\eq{
  \bar Y=\ve\parc{U}+\sum_{j=1}^m\ve \bar G_j\parc{V_j}+\bar H Z\parc{Z},
}

where $\bar G_j=\bar G_j(U,V,\ve)$, for $2\geq j\geq k-1$, and $\bar H=\bar H(U,V,\ve)$ are $\Cl$ functions, and where $\left\{ Z=0\right\}$ corresponds to a choice center manifold which we denote by $\Wc$. We remark that in the vector field $\bar Y$, the functions $\bar G_j$ and $\bar H$ are independent of $Z$. Furthermore we have
\eq{
  \bar H(0,0,0)=\parcs{\tilde H}{z}(0,0,0,0)<0.
}

This means that in a small neighborhood of the origin $\bar Y$ can be divided by $|\bar H|$. In other words, $\bar Y$ is $\Cl$-equivalent to
\eq{
  \mathcal Y=\ve\mathcal G \parc{U}+\sum_{j=1}^m\ve \bar K_j\parc{V_j}-Z\parc{Z},
}

where $\mathcal G(0,0,0)\neq 0$ and $\bar K_j=\bar K_j(U,V,\ve)$, for $2\geq j\geq k-1$, are $\Cl$. Next, since $\Wc=\left\{ Z=0 \right\}$ is invariant under the flow of $\mathcal Y$, we can study the restriction $\mathcal Y|_{Z=0}$. This is
\eq{
  \mathcal Y|_{Z=0}=\ve\mathcal G \parc{U}+\sum_{j=1}^m\ve \bar K_j\parc{V_j}.
}

For $\ve\neq 0$, the vector field $\mathcal Y|_{Z=0}$ is regular because $\mathcal G(0,0,0)\neq 0$. Thus, by the flow-box theorem, there exists a change of coordinates, depending in a $\Cl$ way on $\ve$, under which $\mathcal Y|_{Z=0}$ can be written as
\eq{
  \ve\parc{U}+\sum_{j=1}^{m}0\parc{V_j}.
}

This implies that $\mathcal Y$ is $\Cl$-equivalent to
\eq{
  X_{reg}^N=\ve \parc{U}+\sum_{j=1}^m0\parc{V_j}-Z\parc{Z},
}

as stated in the proposition.

\end{proofof}

Motivated by \cref{prop:nf_reg} let us now discuss the dynamics of the vector field
\eq{\label{r1} X_{reg}^N=\ve \parc{U}+\sum_{j=1}^m0\parc{V_j}-Z\parc{Z}.
}

The slow manifold $S$, corresponding to the normal form \cref{r1}, is given by
\eq{
  S=\left\{ \ve=0, \, Z =0 \right\}.
}

Furthermore, we can parametrize the solution of \cref{r1} by $U$. Let us define the sections
\eq{\label{sec_reg}
  \Sigma^{-} &= \left\{ (U,V,Z,\ve)\in\R\times\R^m\times\R\times\R\, | \, U=U^- \right\}\\
  \Sigma^{+} &= \left\{ (U,V,Z,\ve)\in\R\times\R^m\times\R\times\R\, | \, U=U^+ \right\},
}

where $U^-<U^+$. The sections $\Sigma^{-}$ and $\Sigma^{+}$ are transversal to the manifold $S$ and therefore, for $\ve\neq 0$, are also transversal to the flow of \cref{r1}. Associated to these sections, we define the transition
\eq{\label{regtr}
  \Pi &:\Sigma^-\to\Sigma^+\\
  &(V,Z,\ve)\mapsto (\tilde V,\tilde Z,\tilde\ve).
}

To compute the component $\tilde Z$ we only need to integrate $\tfrac{dZ}{dU}=-\tfrac{1}{\ve}Z$. Then it follows that $\tilde Z=Z(T)$, where $T$ is the time to go from $\Sigma^-$ to $\Sigma^+$, which is $T=U_f-U_i$. Then it follows that
\eq{
  \tilde V&= V\\
  \tilde Z&=Z\exp\left(-\frac{1}{\ve}(U_f-U_i)\right)\\
  \tilde\ve&=\ve.
}

Observe the particular format of the transition $\Pi$. The $Z$ component is an \emph{exponential} contraction towards the center manifold $\left\{ Z=0 \right\}$. Maps with this characteristic appear frequently in our text and also in several other cases where slow-fast systems are studied. Therefore, in \cref{sec:Exp_trans} we discuss in a rather general way, the properties of such maps.

\subsection{Formal normal form of $A_k$ slow-fast systems}\label{sec:formal_nf}
In this section we recall a normal form of the so-called $A_k$ slow-fast systems. A proof can be found in \cite{Jardon2}. This normalization is important since it eliminates many unwanted terms from the system being studied here.

\begin{definition}\label{def:AkSFS} Let $k\in\N$ with $k\geq 2$. An $A_k$ slow-fast system ($A_k$-SFS) is an ODE of the form
\eq{\label{formal1}
x_1' &= \ve(1+ f_1)\\
x_j' &= \ve f_j\\
z' &= -\left( z^k+\sum_{i=1}^{k-1} x_iz^{i-1} \right) + \ve f_k\\
\ve' &= 0,  
}

where $j=2,\ldots,k-1$, and where the functions $f_i=f_i(x_1,\ldots,x_{k-1},z,\ve)$, for $1\leq i\leq k$, are smooth.
\end{definition}

\begin{remark}\leavevmode
\begin{itemize}
  \item The system investigated in this work is an $A_3$-SFS. 

  \item The slow manifold associated to an $A_k$-SFS is defined by
  \eq{
  S=\left\{ (x,z)\in\R^k\, | \, z^k+\sum_{i=1}^{k-1} x_iz^{i-1}=0  \right\}.
  }

  The manifold $S$ can equivalently be defined as the critical set of an $A_k$ catastrophe \cite{Arnold_singularities}. Hence the name $A_k$-SFS.
\end{itemize}

\end{remark}

Locally, we can regard \cref{formal1} as $X=F+P$ where $F$ and $P$ are smooth vector fields of the form

\eq{\label{formalF}
F &= \ve\parc{x_1}+\sum_{j=2}^{k-1}0\parc{x_j}+g\parc{z}+0\parc{\ve}
}

and
\eq{\label{formalP}
P &=\sum_{i=1}^{k-1}\ve f_i\parc{x_i}+\ve f_k\parc{z}+0\parc{\ve},
}

respectively and where $g=-\left( z^k+\sum_{i=1}^{k-1} x_iz^{i-1} \right)$. We refer to $F$ as the ``principal part'' and to $P$ as the ``perturbation''. Briefly speaking we want to eliminate, via a change of coordinates, the perturbation. The procedure of normalizing the vector field $X$ is motivated by \cite{Sto10}, where normal forms of analytic perturbations of quasihomogeneous vector fields are investigated. The relevant result is the following

\begin{theorem}[Formal normal form \cite{Jardon2}]\label{prop:formal_nf} Let $k\geq 2$ and let $X=F+P$ be a smooth vector field where
\eq{
F=\ve\parc{x_1}+\sum_{i=2}^{k-1}0\parc{x_i}-\left( z^k+\sum_{j=1}^{k-1}x_jz^{j-1} \right)\parc{z}+0\parc{\ve}.
}

and where 
\eq{
  P=\sum_{i=1}^{k-1}P_i\parc{x_i}+P_k\parc{z}+0\parc{\ve},
}

where each $P_i=P_i(x_1,\ldots,x_{k-1},z,\ve)$ is a smooth function. Assume that the following conditions are satisfied
\begin{enumerate}
  \item $P_i(x_1,\ldots,x_{k-1},z,0)=0$,
  \item $\rho(\hat P_i)\geq 2k-i+1$,
\end{enumerate}

where $\hat P_i$ denotes the Taylor expansion of $P_i$ and $\rho(\hat P_i)$ is the quasihomogeneous order of the polynomial $\hat P_i$.
Then, there exists a formal diffeomorphism $\hat\Phi$ such that $\hat\Phi_*\hat X=F$. 
\end{theorem}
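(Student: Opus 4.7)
The natural strategy, in the spirit of \cite{Sto10}, is a graded normal-form argument: construct $\hat\Phi$ as an infinite composition of time-one maps of quasihomogeneous vector fields, each chosen to kill one quasihomogeneous degree of the perturbation. The first step is to equip the space with the weights $w(z)=1$, $w(x_j)=k-j+1$ for $j=1,\dots,k-1$, and $w(\ve)=2k-1$. A direct check shows that every monomial of $z^k+\sum_{j=1}^{k-1}x_jz^{j-1}$ has weight $k$ and that $\ve\parc{x_1}$ has weight $(2k-1)-k=k-1$, so $F$ is quasihomogeneous of weight $\delta:=k-1$. Under the same grading the hypothesis $\rho(\hat P_i)\ge 2k-i+1$ translates into: every term of $P_i\parc{x_i}$ has weight at least $k$. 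Since $P_i|_{\ve=0}=0$ with $w(\ve)=2k-1$, the Taylor expansion of $P$ can be written as $\hat P=\sum_{r\ge k}P^{(r)}$ where each $P^{(r)}$ is quasihomogeneous of weight $r>\delta$ and has zero $\parc{\ve}$-component.

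Second, I would run an inductive elimination on weight. Suppose that after $N-1$ steps we have $(\hat\Phi_{N-1})_*\hat X=F+R_N$ with $R_N$ of quasihomogeneous order $\ge N$, and decompose $R_N=R_N^{(N)}+\cdots$. I seek a quasihomogeneous vector field $Y_N$ of weight $N-\delta$, with $Y_N^{\ve}=0$, solving the homological equation
\eq{[F,Y_N]=R_N^{(N)}.}
Setting $\hat\Phi_N=\exp(Y_N)\circ\hat\Phi_{N-1}$ and expanding $(\exp Y_N)_*(F+R_N)$ as a Lie series, the weight-$N$ contribution cancels by construction, while every remaining term has weight at least $2N-\delta>N$. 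Because the time-one map of $Y_N$ differs from the identity only in quasihomogeneous degrees $\ge N-\delta$, the partial compositions stabilize degree-by-degree in the formal topology, so $\hat\Phi:=\lim_N\hat\Phi_N$ is a well-defined formal diffeomorphism and satisfies $\hat\Phi_*\hat X=F$.

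The main obstacle is solving the homological equation in the correct graded space. Writing $Y_N=\sum_{i=1}^{k-1}Y_i\parc{x_i}+Y_k\parc{z}$ and using $Y_N^\ve=0$, the components of $[F,Y_N]$ have a triangular structure: each $x_i$-component reduces to
\eq{\ve\parcs{Y_i}{x_1}-\Bigl(z^k+\textstyle\sum_{j=1}^{k-1}x_jz^{j-1}\Bigr)\parcs{Y_i}{z}=R_{N,i}^{(N)},}
which decouples from the rest, and the $z$-component yields an analogous transport equation for $Y_k$ once the $Y_i$ have been constructed. The technical heart is to show that the Lie derivative $\mathcal{L}_F$, restricted to the space of quasihomogeneous formal polynomials of a given weight that vanish at $\ve=0$, is surjective onto the admissible space of right-hand sides; the bound $\rho(\hat P_i)\ge 2k-i+1$ is exactly tight for this surjectivity, since it keeps every right-hand side outside the low-weight resonances of $\mathcal{L}_F$. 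A monomial-by-monomial argument, using the transport $\ve\parc{x_1}$ to raise $x_1$-degree and the regularity of $z^k+\sum x_jz^{j-1}$ as a polynomial in $z$, shows that every admissible right-hand side lies in the image. Once this is established the iteration closes and $\hat\Phi$ is obtained.
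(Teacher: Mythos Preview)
The paper does not actually prove this theorem: it is quoted from \cite{Jardon2} with the explicit remark ``A proof can be found in \cite{Jardon2}'', and the surrounding text only adds that the normalization procedure is ``motivated by \cite{Sto10}, where normal forms of analytic perturbations of quasihomogeneous vector fields are investigated.'' So there is no in-paper proof to compare against; the relevant benchmark is the quasihomogeneous graded normal-form scheme of Lombardi--Stolovitch that the paper points to.

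Your sketch is precisely that scheme, and the set-up is correct: with weights $w(z)=1$, $w(x_j)=k-j+1$, $w(\ve)=2k-1$ the principal part $F$ is quasihomogeneous of weight $\delta=k-1$, the hypothesis $\rho(\hat P_i)\ge 2k-i+1$ is exactly the statement that $P$ has vector-field weight $\ge k>\delta$, and the filtered Lie-series elimination you describe is the standard iteration. The triangular structure of $[F,Y]$ you identify (the $\parc{x_i}$-components decouple, the $\parc{z}$-component then picks up the already-constructed $Y_i$'s) is also right. One small imprecision: after conjugating by $\exp(Y_N)$ the residual from $R_N-R_N^{(N)}$ has weight $\ge N+1$, not $2N-\delta$; only the newly created bracket terms sit at weight $\ge 2N-\delta$. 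Since $N\ge k=\delta+1$ both bounds exceed $N$, so the induction still closes.

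The one place your proposal is genuinely incomplete is the solvability of the homological equation. You assert that $\mathcal L_F$ is surjective on the admissible graded pieces and gesture at ``a monomial-by-monomial argument,'' but this is exactly where the two hypotheses interact and where the work in \cite{Jardon2} lies. Condition~(1), $P_i|_{\ve=0}=0$, is what makes the transport operator $\ve\,\partial_{x_1}+g\,\partial_z$ invertible on the relevant monomial spaces: every right-hand side carries at least one factor of $\ve$, which the operator $\ve\,\partial_{x_1}$ can trade for an extra power of $x_1$ without producing denominators, while the nilpotent action of $g\,\partial_z$ on polynomials in $z$ of bounded degree lets you clear the $z$-dependence in finitely many steps. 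If you want the argument to stand on its own you should make this inversion explicit (or cite \cite{Jardon2,Sto10} for it); as written, the reader has to take the key surjectivity on faith.
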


In words, \cref{prop:formal_nf} shows that $\hat X$ and $F$ are conjugated via $\hat\Phi$. It follows that, by Borel's lemma \cite{Brocker}, the formal vector field $\hat X^N=F$ can be realized as a smooth vector field $X^N=F+\tilde P$ where $\tilde P$ is \emph{flat} at $(x,z,\ve)=(0,0,0)$. This has important consequences in the geometric desingularization of an $A_3$-SFS, presented in the following section.

\section{Geometric desingularization of a slow-fast system near a cusp singularity}\label{sec:GeomDes}
\renewcommand{\a}{a}
\renewcommand{\b}{b}
\newcommand{\z}{z}

In this section we study an $A_3$ slow-fast system based on: a) the techniques introduced in \cref{sec:preliminaries} and in \cref{sec:Exp_trans}, and b) the blow up method. To simplify the notation, let us now write the $A_3$-SFS  as
\eq{\label{eq:cusp1}
X &=\ve(1+f_1)\parc{\a}+\ve f_2\parc{\b}-(z^3+\b z+\a+\ve f_3)\parc{z} + 0\parc{\ve},
}

where thanks to \cref{prop:formal_nf}, the smooth functions $f_i=f_i(a,b,z,\ve)$ are flat at the origin of $\R^4$. We invetigate the transition associated to \cref{eq:cusp1} between the sections
\eq{\label{def:cuspSectionsfar}
\Sigma^{-} &= \left\{ (\a,\b,z,\ve)\in\R^4\, | \, \a=-a^-, \, z>0 \right\}\\
\Sigma^{+} &= \left\{ (\a,\b,z,\ve)\in\R^4\, | \, \a=a^+, \, z<0 \right\},
}

where $a^->0$ and $a^+>0$ are arbitrarily large constants. However, since the trajectories of $X$ spend a long time along regular parts of $S$, it will be useful to define the ``entry'' and ``exit'' sections 
\eq{\label{def:cuspSections}
\Sigma^{\en} &= \left\{ (\a,\b,z,\ve)\in\R^4\, | \, \a=-\a_0, \, z>0 \right\}\\
\Sigma^{\ex} &= \left\{ (\a,\b,z,\ve)\in\R^4\, | \, \a=\a_0, \, z<0 \right\},
}

where $\a_0$ is a positive but sufficiently small constant, for reference see \cref{fig:rr}.

\begin{figure}[htbp]\centering
  \begin{tikzpicture}
    \pgftext{\includegraphics[scale=1.5]{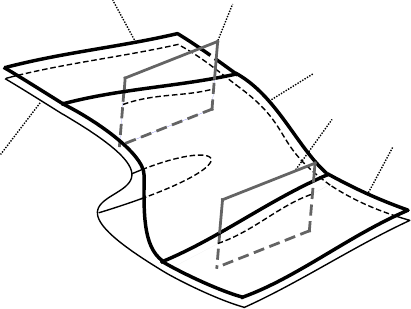}}
    \node at (-3.3,-.25) {$S$};
    \node at (-1.3,2.5) {$S_{\ve}^{-}$};
    \node at (3,0.4) {$S_{\ve}^{+}$};
    \node at (1.8,1.4) {$\mathcal{M}_{\ve}$};
    \node at (2.1,0.75) {$\Sigma^{\ex}$};
    \node at (.5,2.5) {$\Sigma^{\en}$};
  \end{tikzpicture}
  \caption{ Qualitative representation of the investigation performed in this section. The sections $\Sigma^{\en}$ and $\Sigma^{\ex}$ are arbitrarily close to the cusp point. On the other hand the sections $\Sigma^{-}$ and $\Sigma^{+}$ (not shown) are parallel to $\Sigma^{\en}$ and $\Sigma^{\ex}$ but far away from the cusp point. In a qualitative sense, we will construct an invariant manifold $\M_\ve$ and then extend it all the the way up to the sections $\Sigma^{-}$ and $\Sigma^{+}$. Our analysis aims for simplicity and thus depends extensively on the usage of normal forms. This, of course, makes our results coordinate-dependant.}
  \label{fig:rr}
\end{figure}

It will be clear from our analysis in the blow up space \cref{Ken} that the section $\Sigma^-$ needs to be partitioned as follows.

\begin{definition}[The inner layer and the lateral regions]\label{def:layers} Let $0<L<M<\infty$ be constants. The inner layer $\Sigma^{\inn}\subset\Sigma^-$ is defined as
  \eq{
    \Sigma^-\supset\Sigma^{\inn}=\left\{ (b,z,\ve)\in\Sigma^-\, | \, |b|<M\ve^{2/5} \right\}.
  }

  On the other hand, the lateral regions are defined as
  \eq{
    \Sigma^-\supset\Sigma^{+b}&=\left\{ (b,z,\ve)\in\Sigma^-\, | \, b>L\ve^{2/5} \right\}\\
    \Sigma^-\supset\Sigma^{-b}&=\left\{ (b,z,\ve)\in\Sigma^-\, | \, -b>L\ve^{2/5} \right\}.
  }
  Note that the set $\left\{\Sigma^{\inn},\Sigma^{+b}, \Sigma^{-b}  \right\}$ is an open cover of $\Sigma^-$, see \cref{fig:part0}

  \begin{figure}[htbp]\centering
  \begin{tikzpicture}
    \pgftext{\includegraphics[scale=1.5]{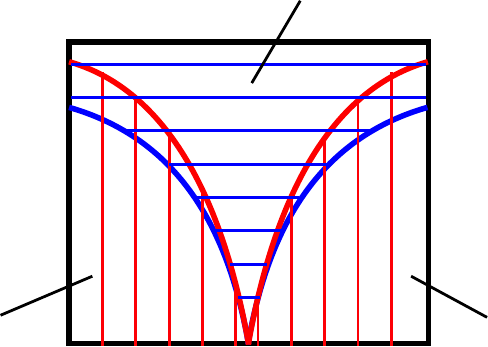}}
    \node at (1.25,2.9) {$\Sigma^{\inn}$};
    \node at (4.1,-2.1) {$\Sigma^{+b}$};
    \node at (-4.1,-2.1) {$\Sigma^{-b}$};
    \node at (0,-3) {$b$};
    \node at (-3,0) {$\ve$};
  \end{tikzpicture}\hfill

  \caption{ The section $\Sigma^{-}$ needs to be partitioned into three subsections: the inner layer $\Sigma^{\inn}$ and the lateral regions $\Sigma^{+b}$, $\Sigma^{-b}$. From a qualitative point of view, these three layers correspond to three different types of trajectories: 1. Trajectories starting at $\Sigma^{\inn}$ pass close to the cusp point. Observe that $\lim_{\ve\to 0}(\Sigma^{\inn})=\left\{ b=0 \right\}$ and then corresponds to a solution of the associated $CDE$ passing exactly through the cusp point. 2. Trajectories starting at $\Sigma^{+b}$ pass sufficiently away from the cusp point along the regular side of the manifold $S$. 3. Trajectories starting at $\Sigma^{-b}$ pass sufficiently away from the cusp point along the folded side of the manifold $S$.  }
  \label{fig:part0}
\end{figure}
\end{definition}

We are now in position to present our main result. In the following theorem, we characterize the transition $\Pi:\Sigma^-\to\Sigma^+$ under a suitable choice of coordinates at the section $\Sigma^-$ and $\Sigma^+$. Furthermore, we give details on the differentiability of this map according to the cover of $\Sigma^-$, see \cref{def:layers}.

\begin{theorem}[Transition map of an $A_3$-SFS]\label{teo:main} Let $X$ be an $A_3$ slow-fast system. This is, $X$ is a vector field defined by
\eq{\label{eq:mainteo}
X=\ve(1+ f_1)\parc{\a}+\ve f_2\parc{\b}-\left(z^3+\b z+\a+\ve f_3\right)\parc{z} + 0\parc{\ve},
}

where each $f_i=f_i(\a,\b,z,\ve)$, $i=1,2,3$, is smooth. Let the sections $\Sigma^{-}$, $\Sigma^{+}$ be defined as above. Then we can choose suitable $\Cl$-coordinates $(B,Z,\ve)$ in $\Sigma^{-}$  and $\Cl$-coordinates $(\tilde B,\tilde Z,\tilde\ve)$ in $\Sigma^{+}$ such that the transition $\Pi:(B,Z,\ve)\mapsto (\tilde B,\tilde Z,\tilde\ve)$ is an exponential type map of the form
  \eq{\label{eq:expmainteo}
     \Pi(B,Z,\ve)=\left( B+h, \, \phi(B,\ve)+Z\exp\left( -\frac{A(B,\ve)+\Psi(B,Z,\ve)}{\ve} \right),\ve \right),
    }
    where $h$ is flat at the origin,  $A>0$ is $\Cl$, $\phi$ is $\Cl$-admissible with $\phi(B,0)=0$, and  $\Psi$ is $\Cl$-admissible with $\Psi(B,Z,0)=0$, see \cref{sec:Exp_trans} for the definition of $\Cl$-admissible. Moreover, we have the following properties of the function $A$, $\phi$ and $\Psi$.

\begin{enumerate}
  \item $-A(B,0)=I(B)$ where $I$ is the slow divergence integral associated to \cref{eq:mainteo}.
  \item Restricted to $(B,Z,\ve)\in\Sigma^{\inn}$, there are functions $\tilde \phi$ and $\tilde \Psi$ such that
   \eq{
   \phi(B,\ve) &=\tilde\phi\left(\mu,\ve^{1/5}\right)\\
      \Psi(B,Z,\ve) &= \tilde\Psi\left(|B|^{1/2},\ve^{1/5}, \ve\ln\ve, \mu,Z\right),
   }
   where $\tilde \phi$ and $\tilde \Psi$ are $\Cl$-functions with respect to monomials (see \cref{def:mon}) with $\mu=B\ve^{-2/5}$. Note that in this domain, $\mu$ is well defined in the sense that $\mu$ is bounded by a constant as $\ve\to 0$.
  \item Restricted to $(B,Z,\ve)\in\Sigma^{+b}$, there is a function $\tilde\Psi$ such that
   \eq{
   \phi(B,\ve) &= 0\\
      \Psi(B,Z,\ve) &= \tilde\Psi\left(|B|^{1/2},\ve^{1/5}, \ve\ln(|B|), \sigma,Z\right),
   }
   where $\tilde\Psi$ is a $\Cl$-function with respect to monomials (see \cref{def:mon}) with $\sigma=\ve|B|^{-5/2}$. Note that in this domain, $\sigma$ is well defined since $|B|>0$.
  \item  Restricted to $(B,Z,\ve)\in\Sigma^{-b}$, there are functions $\tilde \phi$ and $\tilde \Psi$ such that
   \eq{
   \phi(B,\ve) &=\tilde\phi\left(|B|^{1/2},\sigma\right)\\
      \Psi(B,Z,\ve) &= \tilde\Psi\left(|B|^{1/2},\ve^{1/5}, \ve\ln(|B|), \sigma\right),
   }
   where $\tilde \phi$ and $\tilde \Psi$ are $\Cl$-functions with respect to monomials (see \cref{def:mon}) with $\sigma=\ve|B|^{-5/2}$. Note that in this domain, $\sigma$ is well defined since $|B|>0$.
\end{enumerate}
\end{theorem}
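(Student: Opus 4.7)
By \cref{prop:formal_nf} combined with Borel's lemma, I first replace $X$ by a smooth normal form $X^{N}=F+\tilde P$ in which $F$ is the principal part of \cref{eq:mainteo} and the perturbation $\tilde P$ is flat at the origin of $\R^{4}$. The transition $\Pi$ is then decomposed as $\Pi=\Pi_{+}\circ\Pi_{c}\circ\Pi_{-}$, where $\Pi_{\pm}$ are the transitions between $\Sigma^{\mp}$ and the ``close'' sections $\Sen,\Sex$ along normally hyperbolic branches of $S$, and $\Pi_{c}\colon\Sen\to\Sex$ is the piece containing the cusp.

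The two maps $\Pi_{\pm}$ are handled directly by \cref{prop:nf_reg}: in appropriate $\Cl$ coordinates each of them takes exactly the exponential form \cref{regtr}, and the contraction rate is computed from the slow divergence integral of \cref{sec:sdi}. This yields the leading identity $-A(B,0)=I(B)$ of conclusion (1) and fixes the gross structure of $\phi$, $A$ and $\Psi$ outside a neighbourhood of the cusp.

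The cusp piece $\Pi_{c}$ is analysed via the weighted blow-up $(a,b,z,\ve)=(r^{3}\bar a,r^{2}\bar b,r\bar z,r^{5}\bar\ve)$, whose weights $(3,2,1,5)$ make $F$ quasihomogeneous of positive degree; flatness of $\tilde P$ ensures that in every chart the blown-up field is a $\Cl$-small perturbation of the polynomial blow-up of $F$. Three charts are used: an entry chart $(\bar a=-1)$, a family chart $(\bar\ve=1$, so that $r=\ve^{1/5})$, and an exit chart $(\bar a=1)$. The partition $\{\Sigma^{\inn},\Sigma^{+b},\Sigma^{-b}\}$ of $\Sigma^{-}$ introduced in \cref{def:layers} distinguishes trajectories that cross the blown-up cusp through the family chart (those with $|B|\leq M\ve^{2/5}$, yielding the natural parameter $\mu=B\ve^{-2/5}$) from those that bypass it through one of the lateral charts (those with $|B|\geq L\ve^{2/5}$, yielding the natural parameter $\sigma=\ve|B|^{-5/2}$). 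In the entry and exit charts the boundary $\{r=0\}$ is again a NHIM, so \cref{prop:nf_reg} applies chart-wise and produces exponential-type transitions whose admissibility monomials are exactly those listed in conclusions (2)--(4); composing with $\Pi_{\pm}$ and transporting back to the original $(B,Z,\ve)$-coordinates gives \cref{eq:expmainteo}, with the flat term $h$ absorbing residual contributions of $\tilde P$.

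The main obstacle is the family chart, where the rescaled system is a non-integrable cubic whose passage is governed by a one-parameter ($\mu$) family of connections between the attracting slow manifolds of the entry and exit charts. Deriving an explicit $\Cl$-admissible representation of this family, and, crucially, matching it with the lateral expressions on the overlap regions $|B|\sim\ve^{2/5}$ so that the monomial descriptions of $\phi$ and $\Psi$ in (2), (3) and (4) piece together into globally defined $\Cl$-admissible objects on $\Sigma^{-}$, requires sustained use of the admissibility calculus for compositions of exponential maps from \cref{sec:Exp_trans}, and is where most of the technical work of the proof lies.
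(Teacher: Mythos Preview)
Your overall architecture—normal form reduction via \cref{prop:formal_nf} and Borel, the decomposition $\Pi=\Pi_{+}\circ\Pi_{c}\circ\Pi_{-}$, the quasihomogeneous blow-up with weights $(3,2,1,5)$, and gluing exponential-type maps via \cref{sec:Exp_trans}—matches the paper. Two concrete gaps, however, would prevent the plan as written from yielding conclusions (2)--(4).

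\textbf{Missing charts and the hidden fold.} You list three charts ($\bar a=-1$, $\bar\ve=1$, $\bar a=1$) but then refer to ``lateral charts'' without introducing them. The paper needs five charts: the lateral regions $\Sigma^{\pm b}$ are analysed in $K_{\pm\bar b}=\{\bar b=\pm 1\}$, not in the entry/exit charts. More importantly, in $K_{-\bar b}$ the blown-up slow manifold $\{z_2^3-z_2+a_2=0\}$ is \emph{not} a NHIM: it carries two fold points $p_\pm=\pm(\tfrac{2}{3\sqrt 3},\tfrac{1}{\sqrt 3})$, and the transition $\Pi_2^{-b_2}$ must itself be factored as $\Pi_2^{reg_2}\circ\Pi_2^{fold}\circ\Pi_2^{reg_1}$ with a genuine $A_2$ passage in the middle. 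This fold is precisely why conclusion~(4) carries a nontrivial shift $\tilde\phi(|B|^{1/2},\sigma)$ while conclusion~(3) has $\phi\equiv 0$. Your plan, which treats the lateral passages symmetrically as NHIM transitions, cannot produce this asymmetry.

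\textbf{Wrong normal form in the entry/exit charts.} In $K_{\en}$ and $K_{\ex}$ the singularity is semi-hyperbolic with a three-dimensional centre manifold on which the (desingularised) dynamics are saddle-like: $r_1'=-\ve_1 r_1$, $B_1'=2\ve_1 B_1$, $\ve_1'=5\ve_1^2$. \Cref{prop:nf_reg} does not apply here; the paper uses the dedicated semi-hyperbolic normal form and transition formulas of the appendix (\cref{prop:nf_semihyp,prop:tr_semihyp}), and it is the explicit integration of that saddle, together with the partition lemma (\cref{prop:partition}), that generates the monomials $|B|^{1/2}$, $\ve^{1/5}$, $\ve\ln\ve$, $\ve\ln|B|$, $\mu=B\ve^{-2/5}$ and $\sigma=\ve|B|^{-5/2}$ appearing in (2)--(4). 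Invoking \cref{prop:nf_reg} chart-wise would give only a bare exponential and no mechanism for the logarithmic terms.
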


\paragraph{Sketch of the proof.} { The first step is to recall \cref{prop:formal_nf}, which shows that $X$ is formally conjugate to 
\eq{
  F=\ve\parc{\a}+0\parc{\b}-\left(z^3+\b z+\a\right)\parc{z} + 0\parc{\ve}.
}

Next, by means of the Borel's lemma \cite{Brocker}, the vector field $F$ can be realized as a smooth vector field $X^N=F+\ve H$ where $H$ is flat at $(a,b,z,\ve)=(0,0,0,0)$. Thus, from now on, we only treat an $A_3$-SFS given as

\eq{
X=\ve(1+\ve \tilde f_1)\parc{\a}+\ve^2 \tilde f_2\parc{\b}-\left(z^3+\b z+\a+\ve \tilde f_3\right)\parc{z} + 0\parc{\ve},
}

where each $\tilde f_i=\tilde f_i(a,b,z,\ve)$ is flat at $(a,b,z,\ve)=(0,0,0,0)$.

Another important ingredient of the proof is the blow up technique, which is described in \cref{sec:blowup}. This method provides several local vector fields whose corresponding transitions are of exponential type, refer to \cref{sec:Exp_trans}. Later  all these local transitions are composed to produce an exponential type transition between the sections $\Sigma^-$ and $\Sigma^+$. Along the analysis of the local vector fields (in the blow up space) we will take advantage of the flatness of the higher order terms of $X$. The complete proof follows \cref{sec:blowup,Ken,Ke,Kex,Kpm} and is given in \cref{sec:proofofmain}.

Now, assuming that the transition $\Pi$ is of the form \cref{eq:expmainteo}, we can show that $A(B,0)$ is given by the slow divergence integral of $X$. For this, let us recall the Poincaré-Leontovich-Sotomayor formula \cite{DeMaesschalck2005}, which in general is given as follows. 

\begin{proposition} Let $X$ be a vector field on a manifold $M^n$ with a volume form $\Omega$. Let $\Sigma^-$ and $\Sigma^+$ be two open sections of $M$ and transverse to the flow of $X$. Let $\gamma_\ve$ be an orbit of $X$ along a center manifold $\Wc$ of $X$, starting at $p=\gamma_\ve\cap\Sigma^-$ and reaching $q=\gamma_\ve\cap\Sigma^+$ in finite time. Let $\Pi:\Sigma^-\to\Sigma^+$ be the transition map defined in a neighborhood of $p$. If $\psi^-:U\to\Sigma^-$ and $\psi^+:V\to\Sigma^+$, with $U\subset\R^{n-1}$ and $V\subset\R^{n-1}$, are coordinates in $\Sigma^-$ and in $\Sigma^+$ respectively, then
\eq{\label{aux:sdi}
  \det\left( D\left( (\psi^{+})^{-1}\circ\Pi\circ\psi^- \right) \right)(s^-)=\frac{\langle \Omega(p), D\psi^-(s^-)\times X(p) \rangle}{\langle \Omega(q), D\psi^+(s^+)\times X(p) \rangle}\exp\left( \int_{\gamma_\ve}\Div_{\Omega}X  \, d\tau \right),
}%

where $s^-=(\psi^-)^{-1}(p)$ and $s^+=(\psi^+)^{-1}(q)$. The integral is taken along the orbit $\gamma_\ve$ from $p$ to $q$ parametrized by the fast time $\tau$.
\end{proposition}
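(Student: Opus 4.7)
The plan is to reduce the formula to Liouville's flow identity together with one short observation about the transit time. Let $\phi_t$ denote the flow of $X$ and let $T>0$ be the finite time such that $q=\phi_T(p)$. The defining identity $\mathcal{L}_X\Omega=(\Div_\Omega X)\,\Omega$, integrated along trajectories of $X$, yields the Liouville formula
\eqn{
\phi_T^*\Omega_q=\exp\!\Bigl(\int_{\gamma_\ve}\Div_\Omega X\,d\tau\Bigr)\,\Omega_p,
}
where the integral is taken in the fast-time parametrization. This is the only analytic input; the remainder is multilinear algebra.

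Next I would represent the Poincaré map as $\Pi(x)=\phi_{T(x)}(x)$, with $T(\cdot)$ the smooth transit-time function provided by the transversality of $X$ to $\Sigma^+$ and the implicit function theorem. Differentiating at $p$ gives, for every $v\in T_p\Sigma^-$,
\eqn{
D\Pi(p)\,v = D\phi_T(p)\,v + dT(p)(v)\,X(q),
}
so $D\Pi(p)\,v$ and $D\phi_T(p)\,v$ differ by a multiple of $X(q)$. Because $X(q)$ already sits in the last slot of the volume form, the alternating multilinearity of $\Omega_q$ forces
\eqn{
\Omega_q(D\Pi\,v_1,\dots,D\Pi\,v_{n-1},X(q))=\Omega_q(D\phi_T\,v_1,\dots,D\phi_T\,v_{n-1},X(q))
}
for any $v_1,\dots,v_{n-1}\in T_p\Sigma^-$. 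Combining this with the flow-invariance $D\phi_T(p)X(p)=X(q)$ and the Liouville formula above, the right-hand side equals
\eqn{
\exp\!\Bigl(\int_{\gamma_\ve}\Div_\Omega X\,d\tau\Bigr)\,\Omega_p(v_1,\dots,v_{n-1},X(p)).
}

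To conclude, I would specialize $v_i=D\psi^-(s^-)\,e_i$. The identity $\tilde\Pi=(\psi^+)^{-1}\circ\Pi\circ\psi^-$ gives $D\Pi\,v_i=\sum_j (D\tilde\Pi)_{ji}\,D\psi^+(s^+)\,e_j$, and pulling the determinant factor out of the first $n-1$ arguments of $\Omega_q$ turns the left-hand side into $\det(D\tilde\Pi)(s^-)\cdot\Omega_q(D\psi^+(s^+)e_1,\dots,D\psi^+(s^+)e_{n-1},X(q))$. Dividing both sides by this quantity, which is nonzero precisely because $X$ is transverse to $\Sigma^+$ at $q$, yields the stated identity, with the pairing $\langle\Omega(\cdot),\cdot\times\cdots\times\cdot\rangle$ read as the evaluation of the volume form on the displayed $n$-frame (so that the $X(p)$ appearing in the denominator of \cref{aux:sdi} is the natural endpoint value $X(q)$ at the image point). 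I do not anticipate any serious analytical obstacle: the only delicate bookkeeping is the transit-time correction in $D\Pi$, and the observation that its effect is killed by the $X(q)$ in the last slot of $\Omega_q$ reduces the proof to a one-line consequence of Liouville's flow identity.
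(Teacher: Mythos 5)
The paper does not actually prove this proposition: it is recalled as the Poincar\'e--Leontovich--Sotomayor formula with a citation to \cite{DeMaesschalck2005}, so there is no in-paper argument to compare against. Judged on its own, your proof is correct and complete. The three ingredients are exactly the right ones: (i) Liouville's identity $\phi_T^*\Omega_q=\exp\bigl(\int_{\gamma_\ve}\Div_\Omega X\,d\tau\bigr)\Omega_p$, obtained by integrating $\tfrac{d}{dt}\phi_t^*\Omega=\phi_t^*\bigl((\Div_\Omega X)\Omega\bigr)$; (ii) the decomposition $D\Pi(p)v=D\phi_T(p)v+dT(p)(v)\,X(q)$ for the return map $\Pi(x)=\phi_{T(x)}(x)$, whose transit-time correction is annihilated because $X(q)$ occupies the last slot of $\Omega_q$; and (iii) the flow-invariance $D\phi_T(p)X(p)=X(q)$ together with the determinant transformation rule for alternating forms under the frame change $D\Pi\,D\psi^-(s^-)e_i=D\psi^+(s^+)\,D\tilde\Pi(s^-)e_i$. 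The denominator is nonzero by transversality of $X$ to $\Sigma^+$ at $q$, as you note. You are also right to flag that the $X(p)$ appearing in the denominator of \cref{aux:sdi} must be read as $X(q)$ (a vector at $q$, since $\Omega(q)$ is being evaluated there); as literally printed the pairing is not well defined, and your derivation shows the correct statement carries $X(q)$. This is a genuinely useful self-contained justification of a formula the paper only cites.
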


So we have the following.

\begin{proposition} \label{prop:slowdiv} Consider an $A_3$-SFS and assume that the transition $\Pi:\Sigma^-\to\Sigma^+$ is given by \cref{eq:expmainteo}. Then $-A(B,0)=I(B)$, where $I(B)$ is the slow divergence integral associated to the $A_3$-SFS.
\end{proposition}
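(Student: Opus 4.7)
The plan is to apply the Poincaré-Leontovich-Sotomayor formula \cref{aux:sdi} to the transition $\Pi$ along a center-manifold orbit $\{Z=0\}$, and to match the resulting asymptotic expansion with the one read off directly from the explicit formula \cref{eq:expmainteo}. I would take the standard volume $\Omega=d\a\wedge d\b\wedge dz$ on each $\ve=\text{const}$ slice so that the Poincaré map is the two-dimensional map $(B,Z)\mapsto(\tilde B,\tilde Z)$, and fix a center-manifold orbit $\gamma_\ve$ through $(B,0)\in\Sigma^-$ reaching $(\tilde B,0)\in\Sigma^+$ in finite fast time.

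Differentiating \cref{eq:expmainteo} with respect to $(B,Z)$ at $Z=0$ gives a lower-triangular Jacobian (the first component $\tilde B=B+h(B,\ve)$ does not depend on $Z$) whose determinant equals
\[
\det J\Pi\big|_{Z=0}=\bigl(1+\tfrac{\partial h}{\partial B}(B,\ve)\bigr)\exp\!\left(-\frac{A(B,\ve)+\Psi(B,0,\ve)}{\ve}\right).
\]
Because $h$ is flat at the origin, $\partial h/\partial B=o(\ve^\infty)$; because $A$ and $\Psi$ are $\Cl$ and $\Psi(B,Z,0)=0$, one has $A(B,\ve)+\Psi(B,0,\ve)=A(B,0)+O(\ve)$. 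Hence
\[
\ve\log\det J\Pi\big|_{Z=0}\longrightarrow-A(B,0)\quad\text{as }\ve\to 0^+.
\]
On the PLS side, the prefactor $\langle\Omega(p),D\psi^-\times X(p)\rangle/\langle\Omega(q),D\psi^+\times X(q)\rangle$ stays bounded away from zero and infinity because $X$ is nonzero and uniformly bounded on $\Sigma^\pm$; so after multiplication by $\ve$ it contributes nothing in the limit.

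It remains to show $\ve\int_{\gamma_\ve}\Div X\,d\tau\to I(B)$. I would split $\gamma_\ve$ into the two normally hyperbolic pieces (from $\Sigma^-$ to $\Sigma^{\en}$ and from $\Sigma^{\ex}$ to $\Sigma^+$) and the middle piece through the cusp neighborhood between $\Sigma^{\en}$ and $\Sigma^{\ex}$. The proposition in \cref{sec:sdi} immediately gives the two regular pieces as $\ve^{-1}(I_{\mathrm{reg}}(B)+o(1))$. The main obstacle is the middle piece, where the slow manifold fails to be normally hyperbolic and the slow vector field is singular: using the blow-up analysis of \cref{sec:blowup,Ken,Ke,Kex,Kpm} I would argue that, in the central rescaling chart, both the fast-time duration of the passage and the divergence $\Div X=-(3z^2+\b)+O(\ve)$ are appropriate powers of $\ve^{1/5}$ that combine to a contribution of $O(1)=o(\ve^{-1})$. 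Adding this to the regular pieces reconstitutes the (improper but convergent) slow divergence integral $I(B)$, and matching the two asymptotic evaluations of $\det J\Pi|_{Z=0}$ yields $-A(B,0)=I(B)$.
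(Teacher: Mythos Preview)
Your approach is essentially the same as the paper's: apply the Poincar\'e--Leontovich--Sotomayor formula along a center-manifold orbit, read off $\partial_Z\Pi_Z$ (or, in your version, the full Jacobian determinant, which is triangular so reduces to the same thing up to the harmless factor $1+\partial_B h$) from the explicit form \cref{eq:expmainteo}, match the two expressions, multiply by $\ve$, and send $\ve\to 0$.

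The one genuine difference is in how the divergence integral is handled. The paper simply invokes the slow-divergence-integral identity of \cref{sec:sdi} to write $\int_{\gamma_\ve}\Div X\,d\tau=\tfrac{1}{\ve}(I+o(1))$ directly, without isolating the cusp passage; it then remarks that the limit is a priori ill-defined but is rendered well-defined \emph{a posteriori} by the explicit formula \cref{eq:expmainteo}. You instead split $\gamma_\ve$ into the two normally hyperbolic legs (where \cref{sec:sdi} applies verbatim) and the cusp segment, and argue via the $K_{\bar\ve}$ scaling that the latter contributes $O(1)=o(\ve^{-1})$. Your decomposition is more self-contained and avoids the circularity of appealing to the theorem whose proof is in progress; the paper's shortcut is faster but leans on the assumed form of $\Pi$ to justify the limit. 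Either way the conclusion $-A(B,0)=I(B)$ follows.
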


\begin{proof} The only relevant component is $Z$, so denote by $\Pi_Z$ the $Z$-component of $\Pi$. The factor multiplying the exponential in \cref{aux:sdi} can be taken as a constant $C>0$. Then we have that \cref{aux:sdi} for the vector field of \cref{teo:main} reads as
\eq{\frac{\partial\Pi_Z}{\partial Z}= C\exp\left( \int_{\gamma_\ve}\Div_{\Omega}X  \, d\tau \right). }

Using the properties of the slow divergence integral described in \cref{sec:sdi}, and since $C\neq 0$, we have
\eq{\label{mainau0}
\frac{\partial\Pi_Z}{\partial Z} &= C\exp\left( \int_{\gamma_\ve}\Div_{\Omega}X \, d\tau \right)\\
&=\exp\left( \frac{1}{\ve}\left(\int_{\gamma_0}\Div X_0  \, dt+\ve\ln C+o(1) \right) \right)\\
&=\exp\left( \frac{1}{\ve}\left(I +O(\ve) \right) \right),
}

where $I$ is the slow divergence integral of $X$ along a curve in the slow manifold $S$ from $\Sigma^-$ to $\Sigma^+$. In principle, the limit $\ve\to 0$ of \cref{mainau0} is not well defined. However, according to our \cref{teo:main}, we have by differentiating \cref{eq:expmainteo} w.r.t. $Z$
\eq{\label{mainau1}
  \frac{\partial\Pi_Z}{\partial Z}=\exp\left( -\frac{A(B,\ve)+\ve \Psi(B,Z,\ve)}{\ve} \right).
}

Identifying \cref{mainau0} with \cref{mainau1} and taking the limit $\ve\to 0$ we have indeed that 
\eq{
  \lim_{\ve\to 0} (I+O(\ve))=\lim_{\ve\to 0} (-A(B,\ve)+\ve\Psi(B,Z,\ve)),
}

which shows the claim. Note that the slow divergence integral in the coordinates $(a,b,z)$ reads as
\eq{
  I(b)=\tilde I(b,\zeta^+)-\tilde I(b,\zeta^-),
}

where straightforward computations show that 
\eq{\tilde I(b,\zeta)=\frac{9}{5}\zeta^5+2\zeta^3b+b^2\zeta,
}

and where $\zeta^\pm$ is a constant defined by $(a^{\pm},b,\zeta^{\pm})\in\Sigma^{\pm}\cap S$. 

On the other hand, in normal coordinates and along regular parts of the slow manifold, the $A_3$-SFS can be written as (see \cref{sec:nf_reg})
\eq{
X(A,B,Z,\ve)=\ve\parc{A}+0\parc{B}-Z\parc{Z}+0\parc{\ve}.
}%

In these coordinates the slow divergence integral reads as
\eq{
  I=A^+-A^-,
}%
where $A^+$ and $A^-$ are the corresponding parametrizations of $\Sigma^+$ and $\Sigma^-$ (respectively) in the coordinates $(A,B,Z,\ve)$.
  
\end{proof}

}

\subsection{Blow-up and charts}\label{sec:blowup}

Let us briefly recall the blow up technique, for more details see e.g. \cite{DumRou1,DumRou2,Krupa1}. The vector field $X$ \cref{eq:cusp1} is quasihomogeneous \cite{Arnold_singularities,Jardon2}. Therefore, it is convenient to use the \emph{quasihomogeneous blow up}. This technique consists on performing a coordinate transformation defined by
\eq{\label{blowup}
\a =r^{3}\bar \a , \, \b  = r^{2}\bar \b, \,   z=r\bar z, \, \ve=r^{5}\bar\ve,
}

which is called the blow up map, and where $\bar \a^2+ \bar \b^2+\bar z^2+\bar\ve^2=1$ and $r\in[0,+\infty)$.   That is $(\bar a, \bar b, \bar z,\bar\ve,r)\in S^{3}\times\R^+$. Since $\ve\geq 0$, we can restrict the coordinates to $\bar\ve\geq 0$. Note that $S^{3}\times\left\{ 0 \right\}$ is mapped, via the blow up map \cref{blowup}, to the origin of $\R^{4}$. The powers or weights of the blow up map \cref{blowup} are obtained from the type of quasihomogeneity of $X$.

Let us denote by $\Phi(\bar \a,\bar\b, \bar z,\bar \ve)$ the blow up map \cref{blowup}. This map induces a smooth vector field $\tilde X$ on $S^{3}\times\R^+$  defined by $\Phi_*\tilde X=X$. It is often the case in which the vector field $\tilde X$ is degenerate along $S^{3}\times\left\{ 0 \right\}$. Then one defines another vector field $\bar X$ by $\bar X=\frac{1}{r^m}\tilde X$ for a well chosen positive integer $m$ so that $\bar X$ is non-degenerate along $S^{3}\times\left\{ 0 \right\}$. Since $r\in\R^+$, the phase portraits of $\tilde X$ and $\bar X$ are equivalent outside $S^{3}\times\left\{ 0 \right\}$, and therefore it is equally useful to study $\bar X$ instead of $\tilde X$. One obtains a complete description of the local flow of $X$ near the the cusp point by studying the flow of $\bar X$ for $(\bar \a,\bar\b, \bar z,\bar\ve,r)\in S^{3}\times [0,r_0)$ with $r_0>0$ sufficiently small.

For problems of dimension greater than $2$, performing computations in spherical coordinates becomes tedious. Therefore, it is more convenient to consider charts which parametrize hemispheres of the ball $S^{3}\times[0,r_0)$. In the present context, the useful charts are
\eq{\label{charts}
K_{\en}=\left\{ \bar \a=-1 \right\}, \; K_{\ex}=\left\{ \bar\a=1 \right\}, \; K_{\bar\ve}=\left\{ \bar\ve=1 \right\},\; K_{\pm}=\left\{ \bar\b=\pm 1 \right\} 
}

and we always keep $r\in[0,r_0)$. The previous setting is also known as directional blow up. A qualitative picture of the charts is given in \cref{fig:charts}.

Briefly speaking, our analysis goes as follows: first, we perform a local analysis on each chart given in \cref{charts}. Next, we compose (``glue'') the local results to provide a full description of the flow of $X$ \cref{eq:cusp1} in a small neighborhood of the cusp point. In this way, we construct an invariant manifold from $\Sigma^{\en}$ to $\Sigma^{\ex}$. Later we ``push away'' this invariant manifold all the way up to the sections $\Sigma^-$ and $\Sigma^+$ along regular parts of the slow manifold $S$. 

\begin{figure}[htbp]\centering
\begin{tikzpicture}
\pgftext{\includegraphics[]{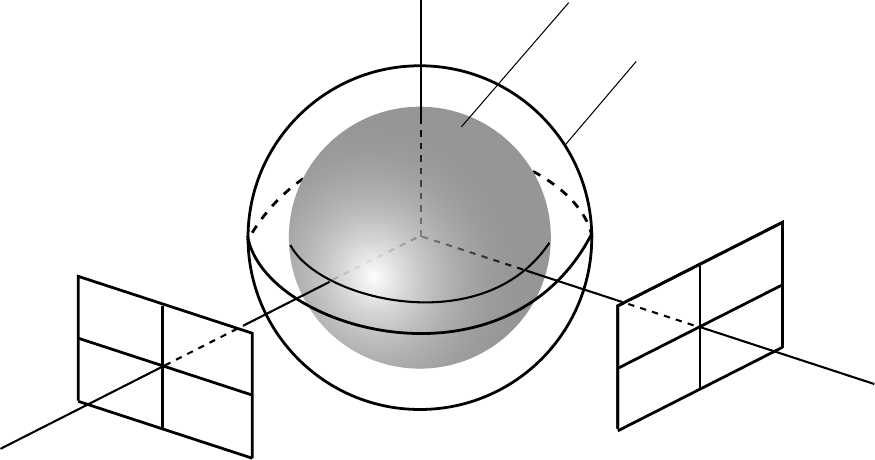}}  
\node at (2.5,1.8) {$S^3\times [0,r_0)$};
\node at (1.5,2.5) {$S^3$};
\node at (-.2,2.5) {$\bar z$};
\node at (-2,-0.8) {$\bar \ve$};
\node at (1.8,-0.5) {$\bar a$};
\node at (-2,-2.6) {$K_{\bar \ve}$};
\node at (1.8,-2.3) {$K_{\bar a}$};
\end{tikzpicture}
\caption{The blow up space and the charts. Each chart $K_\ell$ parametrizes a region of the ball $S^3\times [0,r_0)$. A local analysis in the charts provides a full picture of the dynamics of the vector field $\bar X$.}
\label{fig:charts}
\end{figure}

 To avoid confusion of the coordinates we adopt the following notation. Any object $O$ defined in the chart $K_{\en}$ is denoted by $O_1$. Similarly any object defined in the chart $K_{\ex}$ is denoted by $O_3$. Finally, an object $O$ defined in either of the charts $K_{\bar\ve}$ or $K_{\pm}$ is denoted by $O_{2}$.


\subsection{Analysis in the chart $K_{\en}$}\label{Ken}
  Taking into account our notation convention, the blow-up map in this chart is given by
\eq{\label{blowupKen}
a=-r_1^3, \; b=r_1^2b_1, \; z=r_1^3z_1, \; \ve=r_1^5\ve_1.
}
\renewcommand{\r}{r_1}
\newcommand{\x}{a_1}
\newcommand{\y}{b_1}
\renewcommand{\z}{z_1}
\newcommand{\e}{\ve_1}
\renewcommand{\Wc}{\mathcal{W}_1^{^C}}

The corresponding vector field in this chart (after multiplication by $3$) has the form
\eq{X_{\en}: \begin{cases}
\r' &= - \e\r\left( 1+ \tilde f_1\right)\\
\y' &= 2 \e\y\left( 1+ \tilde f_1\right)+\r^6\e^2\tilde f_2\\
\z' &= -3\left( \z^3+\y\z-1-\frac{1}{3}\e\z \right)+\r^2\e\tilde f_3\\
\e' &= 5\e^2\left( 1+ \tilde f_1\right)
\end{cases}
}
where the functions $\tilde f_i=f_i(\r,\y,\z,\e)$ are flat along $\r=0$, recall that $S^3\times\left\{ r=0 \right\}\mapsto 0\in\R^4$ via the blow up map. We study a transition $\Pi_1:\Delta_{1}^{\en}\to\Delta_1^{\ex}$ where
\eq{
\Delta_{1}^{\en} &= \left\{ (\r,\y,\z,\e)\in\R^4 \, | \, \r=r_0, \e<\delta, \, \z>0 \right\}\\
\Delta_1^{\ex} &= \left\{ (\r,\y,\z,\e)\in\R^4 \, | \, \e=\delta, \r<r_0 \right\},
}
where $r_0$ and $\delta$ are sufficiently small positive constants. 

\begin{remark}
  The section $\Delta_1^{\en}$ corresponds to $\Sen$ in the blow-up space, that is $\Sen=\Phi(\Delta_1^{\en})$, where $\Phi$ is the blow-up map \cref{blowupKen}. This implies that trajectories of $X$ crossing $\Sen$ correspond to trajectories of $X_{\en}$ crossing $\Delta_1^{\en}$.
\end{remark}

Before going any further, let us provide a qualitative description of $X_{\en}$ as in \cite{BKK}. This process can be repeated, following similar arguments, in all the local charts; however, for brevity we only detail it for the current one.

\paragraph{Qualitative description of the flow of $X_{\en}$}{

The subspaces $\left\{ \r=0 \right\}$, $\left\{ \e=0 \right\}$ and $\left\{ \r=0 \right\}\cap\left\{ \e=0 \right\}$ are invariant. Therefore, it is useful to study the flow of $X_{\en}$ restricted to the aforementioned subspaces.

\begin{description}[leftmargin=0cm]
\item[Restriction to $\left\{ \r=0 \right\}\cap\left\{ \e=0 \right\}$.] In this space $X_{\en}$ is reduced to 
\eq{\label{eq:K_en1}
\y' &   = 0\\
\z' &   = -3\left( \z^3+\y\z-1 \right).\\
}

The set 
\eq{\label{gamma1}
\gamma_1 = \left\{ (\y,\z)\, | \,  \z^3+\y\z-1=0  \right\}
} 

is a curve of equilibrium points. The phase portrait of \eqref{eq:K_en1} is shown in figure \ref{fig:Ken1}.

\begin{figure}[htbp]\centering
\begin{tikzpicture}
\pgftext{\includegraphics[scale=1]{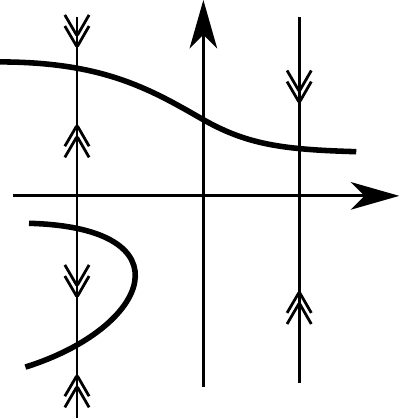}}
\node at (2.2,0.2) {$b_1$};
\node at (0.1,2.3) {$z_1$};
\end{tikzpicture}
\caption{The phase portrait of $X_{\en}$ restricted to the invariant space $\left\{ \r=0 \right\}\cap\left\{ \e=0 \right\}$. The shown curve is $\gamma_1$ and it comprises a set of equilibrium points. Note that locally, all trajectories with initial condition $z_1(0)>0$ are attracted to $\gamma_1|_{\left\{\z>0\right\}}$.}
\label{fig:Ken1}
\end{figure}

\begin{remark} All the trajectories of \cref{eq:K_en1} restricted to an initial condition $z_0>0$ are attracted to the curve $\gamma_1|_{z_1>0}$. Furthermore, due to our definition of $\Delta_1^{\en}$, we are interested \emph{only} in trajectories satisfying this initial condition. Thus, from now on, we restrict our analysis to the subspace $\left\{ \z>0 \right\}$.
  
\end{remark}

\item[Restriction to $\left\{ \e=0 \right\}$.] In this space $X_{\en}$ is reduced to
\eq{\label{eq:K_en2}
\r' &   = 0\\
\y' &   = 0\\
\z' &   = -3\left( \z^3+\y\z-1\right).\\
}

The set $\Gamma_1 = \left\{ (\r,\y,\z)\, | \,  \z^3+\y\z-1=0  \right\}$ is a surface of equilibrium points given by $\Gamma_1=(r_1,\gamma_1)$. Since $\r'=0$, the phase space of \cref{eq:K_en2} is foliated by two dimensional leaves in which the flow looks like \cref{fig:Ken1}.

\item[Restriction to $\left\{ \r=0 \right\}$.] In this space $X_{\en}$ is reduced to
\eq{\label{eq:K_en3}
\y' &   = 2\e\y  \\
\z' &   = -3\left( \z^3+\y\z-1-\frac{1}{3}\e\z \right) \\
\e' & = 5\e^2,
}

Once again, the set $\gamma_1 = \left\{ (\y,\z,\e)\, | \, \e=0, \,  z>0,\,  \z^3+\y\z-1=0  \right\}$ is a curve of equilibrium points. The Jacobian of \cref{eq:K_en3} evaluated along $\gamma_1$ shows that, for small enough $\e$, there exists an invariant center manifold that passes through $\gamma_1$. Furthermore, the non-zero eigenvalue corresponding to the $z$-direction is negative along $\gamma_1$. The phase portrait of \eqref{eq:K_en3} is shown in figure \ref{fig:Ken3}.\\

\begin{figure}[htbp]\centering
\begin{tikzpicture}
\pgftext{\includegraphics[scale=1]{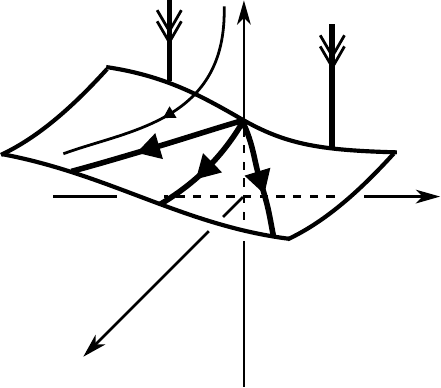}}
\node at (2.5,0) {$b_1$};
\node at (0.2,2.1) {$z_1$};
\node at (-1.5,-1.6) {$\ve_1$};
\end{tikzpicture}

\caption{Phase portrait of \eqref{eq:K_en3} restricted to $\z>0$. The shown surface is an invariant center manifold, which is attracting in the $z_1$-direction. }
\label{fig:Ken3}
\end{figure}

Observe that the $\y$ and the $\e$ directions are expanding. It is important to know the relation between such two expanding variables. We have
\eq{
\frac{d\y}{d\e}=\frac{2}{5}\frac{\y}{\e},
}

which has the solution
\eq{
\y=\y^*\left( \frac{\e}{\e^*} \right)^{2/5},
}

where $\y^*\leq\y$ and $\e^*\leq\e$ are the initial conditions, that is $(\y^*,\e^*)=(\y,\e)|_{\Delta_1^{\en}}$. It is important to look at the ratio of initial conditions $\tfrac{\y^*}{\left(\e^*\right)^{2/5}}$. This ratio tells us that $\y$ is bounded as $\e\to 0$ (and therefore as $\e^*\to 0$) if and only if $\y^*\in O\left( \left(\e^*\right)^{2/5} \right)$. In other words, if the initial condition $\y^*$ is not of order $O((\e^*)^{2/5})$ then the value of $\y$ at $\Delta_1^{\ex}$ blows up as $\e^*\to 0$. This leads us to partition the section $\Delta_1^{\en}$ into three open regions as follows.

\eq{
\Delta_1^{\en,\inn} &= \Delta_1^{\en}|_{|\y|<M\e^{2/5}}\\
\Delta_1^{\en,\y} &= \Delta_1^{\en}|_{\y>K\e^{2/5}}\\
\Delta_1^{\en,-\y} &= \Delta_1^{\en}|_{-\y>K\e^{2/5}},
}

where $0<K<M<\infty$. Observe that the open sets $\Delta_1^{\en,\inn}$, $\Delta_1^{\en,\y}$ and $\Delta_1^{\en,-\y}$ form an open cover of $\Delta_1^{\en}$. Accordingly, these sets induce an open cover of the entry section $\Sigma^{\en}$ via the blow up map \cref{blowupKen}. See \cref{fig:region_partition} for a representation of the aforementioned partition.\bigskip

\begin{figure}[htbp]\centering
\begin{tikzpicture}\node at (-4,0){
\pgftext{\includegraphics[scale=1]{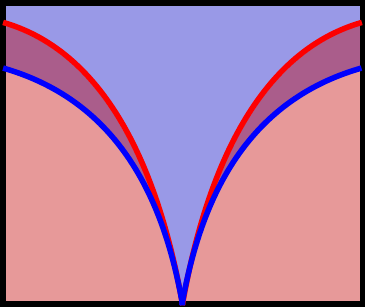}}}; 
\node at (-4,1) {$\Delta_1^{\en,\inn}$};
\node at (-5,-.5) {$\Delta_1^{\en,-\y}$};
\node at (-3,-.5) {$\Delta_1^{\en,\y}$};
\node at (0,-1.5) {$ $};
\end{tikzpicture}

\caption{Partition of $\Delta_1^{\en}$. Trajectories crossing through $\Delta_1^{\en,\e}$ corresponding to the inner wedge area, have a continuation on the chart $K_{\bar\ve}$. On the other hand, outside $\Delta_1^{\en,\e}$ we must consider the lateral regions $\Delta_1^{\en,\y}$ and $\Delta_1^{\en,-\y}$.  }
\label{fig:region_partition}
\end{figure}

Based on the partition of the entry section $\Delta_1^{\en}$, we define three transitions as follows
\eq{\label{trKen}
\Pi_1^{\inn}&:\Delta_1^{\en,\inn} \to \Delta_1^{\ex}\\
\Pi_1^{+\y}&:\Delta_1^{\en,+\y} \to \Delta_1^{\ex,+\y}\\
\Pi_1^{-\y}&:\Delta_1^{\en,-\y} \to \Delta_1^{\ex,-\y},
}

where 

\eq{
\Delta_1^{\ex} &= \left\{ (\r,\y,\z,\e)\in\R^4 \, | \, \e=\delta, \r<r_0 \right\},\\
\Delta_1^{\ex,\pm\y} &=\left\{ (\r,\y,\z,\e)\in\R^4 \, | \, \y=\pm\eta, \r<r_0 \right\}.
}

To finish with the qualitative description, note that there exists a (non-unique) $3$-dimensional center manifold $\Wc$, which is shown to exist by evaluating the Jacobian of $X_{\en}$ all along the surface 

\eq{
\Gamma_1=\left\{ (\r,\y,\z,\e)\, | \, \e=0, \, \z>0 \,\z^3+\y\z-1=0  \right\}.
}

Moreover, by the analysis provided above, the center manifold $\Wc|_{\z>0}$ is attracting for $\e$ small enough. Note that $\Wc|_{\e=0}=\Gamma_1$. This means that $\Wc$ can be interpreted as a perturbation of the slow manifold $S$, written in the coordinates of the current chart. See \cref{fig:Ken2} for a representation of the previous exposition.

\begin{figure}[htbp]\centering
\begin{tikzpicture}
  \pgftext{\includegraphics{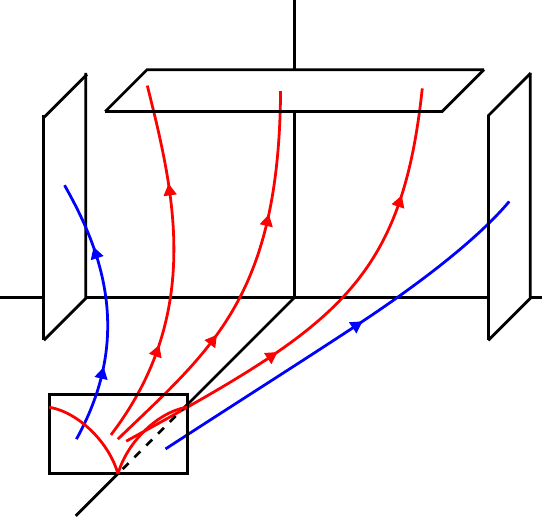}}
  \node at (3,-.5) {$\y$};
  \node at (-2,-2.8) {$\r$};
  \node at (0.25,2.8) {$\e$};

  \node at (-1,-2.4) {$\Delta_1^{\en}$};
  \node at (3.25,1) {$\Delta_1^{\ex,+\y}$};
  \node at (-3.,1) {$\Delta_1^{\ex,-\y}$};
  \node at (1,2.15) {$\Delta_1^{\ex,\e}$};
\end{tikzpicture}
\caption{Phase portrait of the trajectories of $X_{\en}$ depending on their initial condition. If the trajectories satisfy the estimate $y\in O(\ve^{2/5})$, then they arrive to $\Delta_1^{\ex,\ve_1}$ in finite time. If the estimate $y\in O(\ve^{2/5})$ is not satisfied, then we must choose one of the outgoing sections $\Delta_1^{\ex,\pm b}$ in order to have a well defined transition map. }
\label{fig:Ken2}
\end{figure}
\end{description}

}

Let us recall that the vector field $X_{\en}$ is of the form

\eq{\label{eq:Ken_original} X_{\en}: \begin{cases}
\r' &= - \e\r\left( 1+ \tilde f_1\right)\\
\y' &= 2 \e\y\left( 1+ \tilde f_1\right)+\r^6\e^2\tilde f_2\\
\z' &= -3\left( \z^3+\y\z-1-\frac{1}{3}\e\z \right)+\r^2\e\tilde f_3\\
\e' &= 5\e^2\left( 1+ \tilde f_1\right)
\end{cases}
}

We now proceed to describe the transitions $\Pi_1$ given by \cref{trKen}. For this, first we write \cref{eq:Ken_original} in a suitable normal form. Next, based on this normal form, we compute the corresponding transition. \bigskip

First of all, let us move the origin to the point $(\r,\b,\z,\e)=(0,0,1,0)$. This is done by defining a new variable $\zeta_1$ by $\zeta_1=z_1-1$. With this variable we have a new local vector field $Y_{\en}$ which is defined by

\eq{\label{eq:Ken_normal0}
  Y_{\en}:\begin{cases}
    \r' &= - \e\r\left( 1+ \tilde f_1\right)\\
    \y' &= 2 \e\y\left( 1+ \tilde f_1\right)+\r^6\e^2\tilde f_2\\
    \e' &= 5\e^2\left( 1+ \tilde f_1\right)\\
    \zeta_1' &= -3 G(\y,\e,\zeta_1)+\e\tilde h,
  \end{cases}
}
where $G(0,0,0)=0$ and $\tparcs{G}{\zeta_1}(0,0,0)=3$. Now, we want to write $Y_{\en}$ in a suitable normal form. From \cref{prop:nf_semihyp}, we know that $Y_{\en}$ is $\Cl$ equivalent to

\renewcommand{\y}{B_1}

\eq{\label{eq:Ken_normal1}X_{\en}^N: \begin{cases}
\r' &= - \e\r\\
\y' &= 2 \e\y \\
\e' &= 5\e^2\\
Z_1' &= -9(1+H_1(\r,\y,\e))Z_1,
\end{cases}
}

where $H_1$ is a $C^\ell$-function vanishing at the origin. This normal form $X_{\en}^{N}$ is convenient since the chosen center manifold $\Wc$ is now simply given by $\Wc=\left\{  Z_1=0 \right\}$. Furthermore, from the format of $X_{\en}^{N}$, it is evident the ``hyperbolic nature'' of the flow restricted to the center manifold: the restriction of $X_{\en}^N$ to the center manifold $\Wc$ has a simple structure, namely 

\eq{
X_{\en}^N|_{\Wc}: \begin{cases}
\r' &= - \e\r\\
\y' &= 2 \e\y \\
\e' &= 5\e^2.
\end{cases}
}

Note that for $\e\neq 0$, the vector field $\tfrac{1}{\e}X_{\en}^N|_{\Wc}$ is hyperbolic.\bigskip

\newcommand{\tr}{\tilde{r}_1}
\newcommand{\ty}{\tilde{B}_1}
\newcommand{\te}{\tilde{\ve}_1}
\newcommand{\tz}{\tilde{Z}_1}

The vector field $X_{\en}^N$ is of the form studied in \cref{prop:tr_semihyp}, therefore we have that the transition
\eq{
\Pi_1^{\inn}:(\y,\e,\z)\mapsto(\tr,\ty,\tz)
}

is of the form
\eq{
\tr &=r_0\left( \frac{\e}{\delta} \right)^{1/5}\\
\ty &=\y\left( \frac{\delta}{\e} \right)^{2/5}\\
\tz&=Z_1\exp\left( -\frac{9}{5\e}(1+\alpha_1\e\ln\e+\e G_1) \right),
}

where $\alpha_1=\alpha_1(r_0|\y|^{1/2},r_0\e^{1/5})$ and $ G_1= G_1(r_0|\y|^{1/2}, r_0\e^{1/5},\mu)$ where $\mu=\y\e^{-2/5}$. Recall that for this transition we have the condition $\y\in O(\e^{2/5})$ so $\mu$ is well defined.\bigskip

On the other hand, the transition
\eq{
\Pi_1^{\pm\y}:(\y,\e,Z_1)\mapsto(\tr,\te,\tilde Z_1)
}

is (see \cref{prop:tr_semihyp}) of the form
\eq{
\tr &=r_0\left( \frac{\y}{\eta} \right)^{1/2}\\
\te &=\e\left( \frac{\eta}{\y} \right)^{5/2}\\
\tz&=Z_1\exp\left( -\frac{9}{5\e}(1+\beta_1\e\ln(|\y|)+\e H_1) \right),
}

where $\beta_1=\beta_1(r_0|\y|^{1/2},r_0\e^{1/5})$ and $H_1=H_1(r_0|\y|^{1/2},r_0\e^{1/5},\sigma)$, where $\sigma=\e|\y|^{-5/2}$. Note that since $\y\notin O(\e^{2/5})$, $\sigma$ is well defined. We observe that the transitions $\Pi_1^{\e}$ and $\Pi_1^{\pm\y}$ are exponential type maps.


\subsection{Analysis in the chart $K_{\bar\ve}$}\label{Ke}
Taking into account our notation convention, the blow-up map in this chart is given by

\eq{
a=r_2^3a_2, \; b=r_2^2b_2, \; z=r_2^3z_2, \; \ve=r_2^5.
}
\newcommand{\be}{\bar\ve}
\renewcommand{\r}{r_2}
\renewcommand{\x}{a_2}
\renewcommand{\y}{b_2}
\renewcommand{\z}{z_2}
\renewcommand{\e}{\ve_2}

Then, the blown up vector field reads as

\eq{ \label{eq:X2}X_{\be}:\begin{cases}
\r' &=0\\
\x' &=1+\tilde g_1\\
\y' &=r^6\tilde g_2\\
\z' &=-\left(\z^3+\y\z+\x\right)+\tilde g_3,
\end{cases}
}

where the function $\tilde g_i=\tilde g_i(\r,\x,\y,\z)$ are flat along $\r=0$. Note that in this chart $\r$ acts as a parameter and that the flow is regular. Furthermore, note that $X_{\be}$ is not a slow-fast system, but a regular vector field. From the equation $\x'=1+\tilde g_1$, we define the following ``entry'' and ``exit'' sections.
\eq{
\Delta_2^{\en,\be} &=\left\{ (\r,\x,\y,\z)\, | \, \x=-A_0, \, \z\geq 0 \right\},\\
\Delta_2^{\ex,\be} &=\left\{ (\r,\x,\y,\z)\, | \, \x=A_0, \, \z\leq 0 \right\}.\\
}
\renewcommand{\tr}{\tilde{r}_2}
\newcommand{\tx}{\tilde{a}_2}
\renewcommand{\ty}{\tilde{b}_2}
\renewcommand{\tz}{\tilde{z}_2}
\renewcommand{\te}{\tilde{\ve}_2}

Therefore, we define a transition $\Pi_2^{\be}$ as
\eq{
\Pi_2^{\be} : &\Delta_2^{\en,\be}\to\Delta_2^{\en,\be}\\
&(\r,\y,\z)\mapsto (\tr,\ty,\tz).
}

Since \cref{eq:X2} is regular, by the flow box theorem all trajectories starting at $\Delta_2^{\en,\be}$ arrive at $\Delta_2^{\ex,\be}$ in finite time. Moreover, the transition $\Pi_2^{\be}$ is a diffeomorphism and then, from \cref{eq:X2} we have that $\Pi_2^{\be}$ reads as
\eq{
  \Pi_2{\be}(\r,\y,\z) &= (\tr,\ty,\tz) \\
  &=(\r,\y+h_{\y},\phi_1(\r,\y)+ \phi_2(\r,\y)(1+\phi_3(\r,\y,\z))\z),
}

where the $\phi_i$'s are smooth functions. Observe that in this chart, the transition is not an exponential type map.


\subsection{Analysis in the chart $K_{\ex}$}\label{Kex}

Taking into account our notation convention, the blow-up map in this chart is given by

\eq{
a=r_3^3, \; b=r_3^2b_3, \; z=r_3^3z_3, \; \ve=r_3^5\ve_3.
}
  \renewcommand{\r}{r_3}
  \renewcommand{\x}{a_3}
  \renewcommand{\y}{b_3}
  \renewcommand{\z}{z_3}
  \renewcommand{\e}{\ve_3}
  \renewcommand{\Wc}{\mathcal{W}_3^{^C}}

  \renewcommand{\tr}{\tilde{r}_3}
  \renewcommand{\tx}{\tilde{a}_3}
  \renewcommand{\ty}{\tilde{b}_3}
  \renewcommand{\tz}{\tilde{z}_3}
  \renewcommand{\te}{\tilde{\ve}_3}

  \renewcommand{\be}{\bar\varepsilon}
Then, the blown up vector field reads as

\eq{X_{\ex}: \begin{cases}
  \r' &=  \e\r\left( 1+ \tilde f_1\right)\\
  \y' &= -2 \e\y\left( 1+ \tilde f_1\right)+\r^6\e^2\tilde f_2\\
  \z' &= -3\left( \z^3+\y\z+1+\frac{1}{3}\e\z \right)+\r^2\e\tilde f_3\\
  \e' &= -5\e^2\left( 1+ \tilde f_1\right)
\end{cases}
}

where the function $\tilde f_i=\tilde f_i(\r,\y,\e,\z)$ are flat  along $\r=0$. Observe that the vector field $X_{\ex}$ resembles the vector field $X_{\en}$. Therefore, we have a similar behavior of the trajectories, the main difference is that in the case of $X_{\ex}$, there is one expanding ($\r$) and three contracting ($\y$, $\e$ and $\z$) directions. The flow of $X_{\ex}$ is obtained following similar arguments as for the flow of $X_{\en}$.\bigskip

From the fact that $X_{\ex}$ has three contracting and one expanding direction, we define the entry sections

\eq{
\Delta_{3}^{\en,\be} &=\left\{ (\r,\y,\e,\z)\, : \, \e=\delta,\, \z<0, \, \r<r_0\right\}\\
\Delta_{3}^{\en,+\y} &=\left\{(\r,\y,\e,\z)\, : \,\y=\eta ,\, \z<0, \, \r<r_0\right\}\\
\Delta_{3}^{\en,-\y} &=\left\{(\r,\y,\e,\z)\, : \,\y=-\eta,\, \z<0, \, \r<r_0 \right\},
}

where all the constants are positive and sufficiently small, and the exit section

\eq{
\Delta_{3}^{\ex} &=\left\{ (\r,\y,\e,\z)\, : \, \r=r_0, \, \z<0,\, \e<\delta, \, |\y|<\eta \right\}.
}

Then, accordingly, we define three transition maps as follows

\eq{\label{Kextrs}
\Pi_3^{\e} &:\Delta_{3}^{\en,\be} \to \Delta_{3}^{\ex}\\
&:(\r,\y,\z)\mapsto(\ty,\te,\tz)\\[2ex]
\Pi_3^{+\y} &:\Delta_{3}^{\en,+\y}\to \Delta_{3}^{\ex}\\
&:(\r,\e,\z)\mapsto(\ty,\te,\tz)\\[2ex]
\Pi_3^{-\y} &:\Delta_{3}^{\en,-\y}\to \Delta_{3}^{\ex}\\
&:(\r,\e,\z)\mapsto(\ty,\te,\tz).
}

\renewcommand{\y}{B_3}
\renewcommand{\ty}{\tilde{B}_3}
Now we proceed to write $X_{\ex}$ in a normal form just as we did with $X_{\en}$ in \cref{Ken}. Following \cref{prop:nf_semihyp} we have that $X_{\ex}$ is $\Cl$ equivalent to

\eq{X_{\ex}^N:\begin{cases}
  \r' &=  \e\r\\
  \y' &= -2 \e\y\\
  \e' &= -5\e^2\\
  Z_3' &= -9(1+H_3)Z_3,
\end{cases}
}

where $H_3=H_3(\r,\y,\e)$ is a $C^\ell$ function vanishing at the origin. Just as in the chart $K_{\en}$, there exists a three dimensional center manifold $\Wc$ associated to $X_{\ex}^N$ and which has been chosen such that $\Wc=\left\{ Z_3=0 \right\}$. Since $\r$ is the only expanding direction, we take as transition time $T_3=\ln\left( \tfrac{r_0}{\r} \right)$. This transition time is computed from the dynamics restricted to $\Wc$, that is, from the equation $\r'=\r$. In contrast to what happened in the chart $K_{\en}$, the time $T_3$ is well defined for all the three transitions $\Pi_3^{\e}$, $\Pi_3^{+\y}$ and $\Pi_3^{-\y}$. Following \cref{prop:tr_semihyp} we have

\eq{
  \ty &= \y \left( \frac{\r}{r_0} \right)^{2}\\
  \te &=\e \left( \frac{\r}{r_0} \right)^{5}\\
  \tilde Z_3 &= Z_3\exp\left( -\frac{9}{5\e}\left( \left( \frac{r_0}{\r} \right)^{5}-1+\alpha_3\e\ln\r+\e H_3 \right) \right),
}
where $\alpha_3=\alpha_3(\r|\y|^{1/2}, \r\e^{1/5})$ and $H_3=H_3(\r|\y|^{1/2}, \r\e^{1/5},\r)$. Therefore, by taking the definitions of the entry sections we have

\eq{
  \Pi_3^{\ve_3}(r_3,B_3,Z_3) &= \left( \y \left( \frac{\r}{r_0} \right)^{2}, \,  \delta \left( \frac{\r}{r_0} \right)^{5},\, Z_3\exp\left( -\frac{9}{5\delta}\left( \left( \frac{r_0}{\r} \right)^{5}-1+\alpha_3\delta\ln\r+\delta H_3 \right) \right) \right)\\
  \Pi_3^{\pm b_3}(r_3,\ve_3,Z_3) &= \left( \pm\eta \left( \frac{\r}{r_0} \right)^{2}, \,  \ve_3 \left( \frac{\r}{r_0} \right)^{5},\, Z_3\exp\left( -\frac{9}{5\ve_3}\left( \left( \frac{r_0}{\r} \right)^{5}-1+\alpha_3\ve_3\ln\r+\ve_3 H_3 \right) \right) \right).
}

Observe that these transitions are of exponential type.

\subsection{Analysis in the charts $K_{\pm\bar b}$}\label{Kpm}
In this section we study the local flow at the charts $K_{+\bar b}$ and $K_{-\bar b}$. In a qualitative sense, these charts come into play when the initial condition $b_0=b|_{\Sigma^{\en}}$ does not satisfy the estimate $b_0\in O(\ve^{2/5})$. This implies that the corresponding trajectory passes away from the cusp point. The chart $K_{+\bar b}$ ``sees'' trajectories with initial condition $b|_{\Sigma^{\en}}>0$ while $K_{-\bar b}$ ``sees'' trajectories with initial condition $b|_{\Sigma^{\en}}<0$.


\subsection*{Analysis in the chart $K_{+\bar{b}}$}
In this chart the blow-up maps reads
\eq{
  a=r_2^3a_2, \, b=r_2^2, \, z=r_2z_2, \, \ve = r_2^5\ve_2.
}

\renewcommand{\r}{r_2}
  \renewcommand{\x}{a_2}
  \renewcommand{\y}{b_2}
  \renewcommand{\z}{z_2}
  \renewcommand{\e}{\ve_2}
  \renewcommand{\Wc}{\mathcal{W}_2^{^C}}

\renewcommand{\tr}{\tilde{r}_2}
  \renewcommand{\tx}{\tilde{a}_2}
  \renewcommand{\ty}{\tilde{b}_2}
  \renewcommand{\tz}{\tilde{z}_2}
  \renewcommand{\te}{\tilde{\ve}_2}

Then we have that the blow-up vector field is given by
\eq{
  X_{+\bar{b}}:\begin{cases}
    \r' &= \e\bar f_r\\
    \x' &= \e(1+\bar f_{\x})+\e\bar g_{\x}\\
    \e' &= -\e\bar f_{\e}\\
    \z' &= -(\z^3+\z+\x)+\e\bar f_{\z}
  \end{cases}
}

where all the functions $\bar f_\ell$ are flat along $\left\{ \r=0 \right\}$. Observe that the set

\eq{\label{gamma2}
  \Gamma_2=\left\{ (\r,\x,\e,\z) \, |\, \e=0,\, \z^3+\z+\x=0 \right\}
}

is a NHIM of $X_{+\bar{b}}$. However, $X_{+\bar{b}}$ is not exactly a slow-fast system since $\e'\neq 0$, but the restriction of $X_{+\bar{b}}$ to $\left\{\r=0\right\}$ is indeed a slow-fast system. This restriction reads as
\eq{\label{Kb1}
  X_{+\bar{b}}|_{\left\{ \r=0 \right\}}:\begin{cases}
    \x' &= \e\\
    \e' &= 0\\
    \z' &= -(\z^3+\z+\x).
  \end{cases}
}

\begin{remark} The subspace $\left\{\r=0\right\}$ is invariant. Moreover, since $X_{+\bar{b}}$ is a flat perturbation of $X_{+\bar{b}}|_{\left\{ \r=0 \right\}}$, it is equally useful to study the restriction $X_{+\bar{b}}|_{\left\{ \r=0 \right\}}$. After all, by regular perturbation theory, their flows are equivalent.
\end{remark}

The slow manifold  of $X_{+\bar{b}}|_{\left\{ \r=0 \right\}}$ is defined by $\Gamma_2|_{\r=0}$ and is normally hyperbolic. Let us define the sections
\eq{\label{Kbsections}
  \Delta_2^{\en,+\y} &= \left\{ (\r,\x,\e,\z)\in\R^4\, | \, \x=-A_0 \right\}\\
  \Delta_2^{\ex,+\y} &= \left\{ (\r,\x,\e,\z)\in\R^4\, | \, \x=A_0 \right\}.
}

Accordingly, we study the transition
\eq{
  \Pi_2^{+\y}: & \Delta_2^{\en,+\y}\to\Delta_2^{\ex,+\y}\\
  &(\r,\e,\z)\mapsto (\tr,\te,\tz).
}

For a qualitative description of $X_{+\bar{b}}|_{\left\{ \r=0 \right\}}$ and the objects defined above see \cref{fig:Ky1}.

\begin{figure}[htbp]\centering
  \begin{tikzpicture}
    \pgftext{\includegraphics[scale=1]{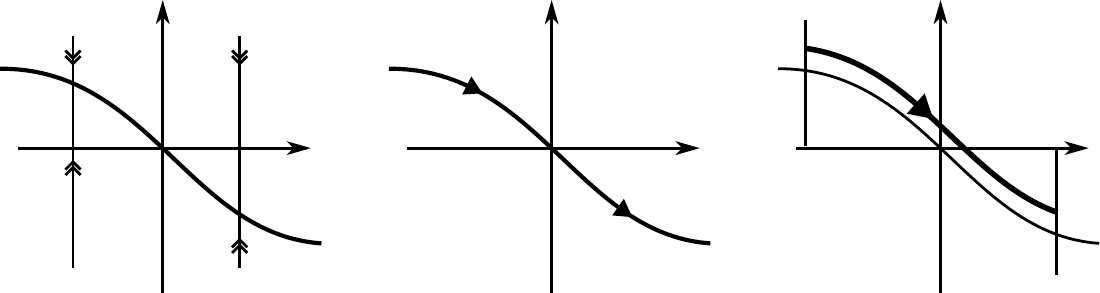}}
    \node at (1.75,0) {$\x$};
    \node at (0,1.6) {$\z$};

    \node at (1.75+4,0) {$\x$};
    \node at (0+4,1.6) {$\z$};

    \node at (1.75-3.9,0) {$\x$};
    \node at (0-3.9,1.6) {$\z$};

    \node at (3,1.5) {$\Delta_2^{\en,+\y}$};   
    \node at (5.5,-1.5) {$\Delta_2^{\ex,+\y}$};    
    \end{tikzpicture}
  \caption{Left: phase portrait of the corresponding layer equation of $X_{+\bar{b}}|_{\left\{ \r=0 \right\}}$. Center: phase portrait of the corresponding CDE of $X_{+\bar{b}}|_{\left\{ \r=0 \right\}}$. Right: Since the critical manifold is regular, by Fenichel theory we know that the manifold $\Gamma_2$ is perturbed to an invariant manifold $\Gamma_{2,\ve_2}$ which is at distance of order $O(\ve_2)$ from $\Gamma_2$. }
  \label{fig:Ky1}
\end{figure}

We know from \cref{sec:nf_reg} that for sufficiently small $\e$, there exists a $C^\ell$ change of coordinates that transforms $X_{+\bar{b}}|_{\left\{ \r=0 \right\}}$ into the vector field

\eq{Y^N:\begin{cases}
  \x' &= \e\\
  \e' &= 0\\
  Z_2' &= -Z_2,
\end{cases}
}

From the definition of the entry and exit sections \cref{Kbsections}, the time of integration is $T=2A_0$. To obtain the component $Z_2$ of the transition $\Pi_2^{+\y}|_{\left\{ \r=0 \right\}}$ we need to integrate

\eq{
  Z_2' = -\frac{1}{\e}Z_2,
}

and then $\tilde Z_2=Z_2(T)$. Therefore we have that after choosing a center manifold $\Wc$, the transition $\Pi_2^{+\y}$ reads as

\eq{
  \Pi_2^{+\y}(0,\e,Z_2) &=\left( 0,\e, Z_2\exp\left( -\frac{2A_0}{\e}\right)\right).
}

Note that $\Pi_2^{+\y}$ is an exponential type map.


\subsection*{Analysis in the chart $K_{-\bar b}$}

In this chart the blow-up maps reads
\eq{
  a=r_2^3a_2, \, b=-r_2^2, \, z=r_2z_2, \, \ve = r^5\ve_2.
}

\renewcommand{\r}{r_2}
\renewcommand{\x}{a_2}
\renewcommand{\y}{b_2}
\renewcommand{\z}{z_2}
\renewcommand{\e}{\ve_2}
\renewcommand{\Wc}{\mathcal{W}_2^{^C}}

\renewcommand{\tr}{\tilde{r}_2}
\renewcommand{\tx}{\tilde{a}_2}
\renewcommand{\ty}{\tilde{b}_2}
\renewcommand{\tz}{\tilde{z}_2}
\renewcommand{\te}{\tilde{\ve}_2}

Then we have that the blow-up vector field is given by
\eq{
  X_{-\bar{b}}:\begin{cases}
    \r' &= -\e\bar f_r\\
    \x' &= \e(1+\bar f_{\x})+\e\bar g_{\x}\\
    \e' &= \e\bar f_{\e}\\
    \z' &= -(\z^3-\z+\x)+\e\bar f_{\z}
  \end{cases}
}

where all the functions $\bar f_\ell$ and $\bar g_{\x}$ are flat along $\left\{ \r=0 \right\}$. Observe that, as in the previous section, the subspace $\left\{ \r=0 \right\}$ is invariant. The restriction of $X_{-\bar{b}}$ to this subspace reads as
\eq{
  X_{-\bar{b}}|_{\left\{ \r=0 \right\}}:\begin{cases}
    \x' &= \e\\
    \e' &= 0\\
    \z' &= -(\z^3-\z+\x).
  \end{cases}
}

The flow of $X_{-\bar{b}}$ is a flat perturbation of the flow of $X_{-\bar{b}}|_{\left\{ \r=0 \right\}}$. Therefore, let us continue our analysis restricted to the invariant space $\left\{ \r=0 \right\}$.\bigskip

The manifold $\Gamma_2$, which is defined by
\eq{
  \Gamma_2=\left\{ (\r,\x,\e,\z) \, |\, \r=0,\, \e=0, \, \z^3-\z+\x=0 \right\}
}

is normally hyperbolic except at the two points $p_{\pm}=\pm\left( \frac{2}{3\sqrt{3}},\frac{1}{\sqrt{3}} \right)$. Let us define the sections
\eq{
  \Delta_2^{\en,-\y} &= \left\{ (\r,\x,\e,\z)\in\R^4\, | \, \x=-A_0 \right\}\\
  \Delta_2^{\ex,-\y} &= \left\{ (\r,\x,\e,\z)\in\R^4\, | \, \x=A_0 \right\},
}

where $A_0>0$ is a sufficiently large constant. We are interested in the transition
\eq{
  \Pi_2^{-\y}: & \Delta_2^{\en,-\y}\to\Delta_2^{\ex,-\y}\\
  &(\r,\e,\z)\mapsto (\tr,\te,\tz).
}

For a qualitative description of $X_{-\bar{b}}|_{\left\{ \r=0 \right\}}$ and the objects defined above see \cref{fig:Ky2}.

\begin{figure}[htbp]\centering
  \begin{tikzpicture}
    \pgftext{\includegraphics[scale=1]{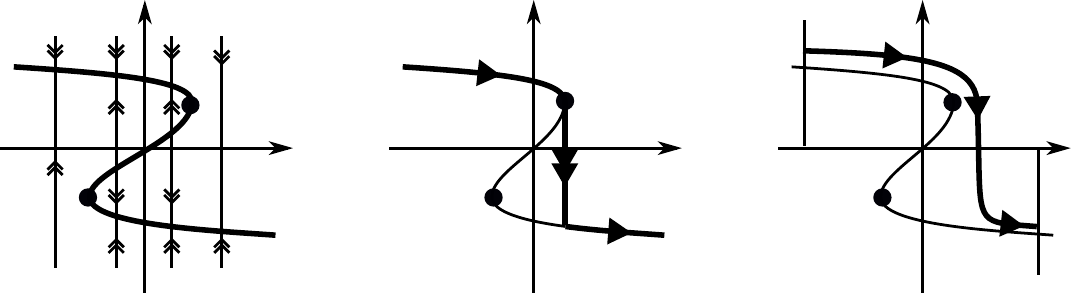}}
    \node at (1.75,0) {$\x$};
    \node at (0,1.6) {$\z$};

    \node at (1.75+4,0) {$\x$};
    \node at (0+4,1.6) {$\z$};

    \node at (1.75-3.9,0) {$\x$};
    \node at (0-3.9,1.6) {$\z$};

    \node at (3,1.5) {$\Delta_2^{\en,+\y}$};   
    \node at (5.5,-1.5) {$\Delta_2^{\ex,+\y}$}; 
    \end{tikzpicture}
  \caption{Left: phase portrait of the corresponding layer equation of $X_{-\bar{b}}|_{\left\{ \r=0 \right\}}$. Center: phase portrait of the corresponding CDE of $X_{-\bar{b}}|_{\left\{ \r=0 \right\}}$. Right: The expected perturbed invariant manifold obtained from the flow of the corresponding CDE and layer equation.}
  \label{fig:Ky2}
\end{figure}

Away from the fold points $p_\pm$, the manifold $\Gamma_2$ is regular and thus, Fenichel's theory applies. However, we need to take care of the transition near the fold point $p_+$. The local transition of a slow-fast system near a fold point is investigated in e.g. \cite{Krupa3}. However, in our current problem this transition is not essential. By this we mean that the passage through the fold point is seen as a flat perturbation of the trajectory along the stable branch of $\Gamma_2$. In a qualitative sense, this is due to the fact that the transition $\Pi_2^{-\y}$ goes along a large NHIM, which fails to be normally hyperbolic only at one point.

\begin{proposition} We can choose appropriate coordinates $(Z_2,\e)$ in $\Delta_2^{\en,-\y}$ such that the transition $\Pi_2^{-\y}: \Delta_2^{\en,-\y}\to\Delta_2^{\ex,-\y}$, restricted to $\r=0$, is an exponential type map of the form

\eq{
\Pi_2^{-\y}(0,\ve_2,Z_2)=\left( 0,\e,\phi_2(\e)+Z_2\exp\left( -\frac{1}{\e}(A_0+\e\psi_2(Z_2,\e)) \right)\right),
}

where $\phi_2$ are flat at $\e=0$, $\psi_2$ is $\Cl$-admissible, and where $A_0$ is given by the slow divergence integral of $X_{-\bar{b}}|_{\left\{ \r=0 \right\}}$.

\end{proposition}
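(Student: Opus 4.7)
The plan is to decompose the restricted transition $\Pi_2^{-\y}|_{\{\r=0\}}$ into three successive phases reflecting the qualitative structure of the orbit sketched in \cref{fig:Ky2}: (i) a regular slow drift along the upper attracting branch of $\Gamma_2|_{\{\r=0\}}$ from $\{\x=-A_0\}$ to a transverse section $\Sigma_1$ placed just before the fold point $p_+$; (ii) a local passage through a small neighborhood of $p_+$ landing on a section $\Sigma_2$ transverse to the lower attracting branch right after the jump; (iii) a second regular slow drift along the lower attracting branch from $\Sigma_2$ to $\{\x=A_0\}$. The entire analysis takes place in the invariant $3$-dimensional subspace $\{\r=0\}$, which carries a genuine planar slow-fast system with $\e$ acting as a parameter.

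For phases (i) and (iii), I invoke \cref{prop:nf_reg} on each branch. This yields $\Cl$-coordinates $(U,Z,\e)$ in which the restricted vector field reduces to $\e\,\parc{U}-Z\,\parc{Z}$, and by the explicit computation of \cref{sec:nf_reg} the corresponding local transition has the exponential-type form $(Z,\e)\mapsto(Z\exp(-T_j/\e),\e)$ with $T_j=U_j^+-U_j^-$. Using \cref{sec:sdi}, the two transition times $T_1$ and $T_2$ coincide up to $\Cl$-admissible corrections with the slow divergence integrals $I_1$ and $I_2$ along the corresponding attracting branches, each of which is strictly positive by normal hyperbolicity.

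For phase (ii), I appeal to the well-established analysis of a generic jump point such as the one carried out in \cite{Krupa3} via its own blow-up at $p_+$. The output I need is that the fold transition sends an arbitrary incoming point into an exponentially small (in $\e$) neighborhood of the intersection of the lower attracting branch with $\Sigma_2$; concretely, in suitable coordinates it reads $(Z,\e)\mapsto(\phi(\e)+\exp(-c/\e)R(Z,\e),\e)$ with $\phi$ flat at $\e=0$, $c>0$ and $R$ uniformly $\Cl$. Note that one may take $\Sigma_2$ sufficiently far past the fold so that, once the orbit has been re-captured by the lower Fenichel manifold, the remaining deviation is indeed flat-in-$\e$. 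This is itself an exponential-type map in the sense of \cref{sec:Exp_trans}.

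Composing the three transitions via the composition rules for exponential-type maps recalled in \cref{sec:Exp_trans}, the exponential rates add, so that the overall contraction is governed by $A_0=I_1+I_2$, which by construction is the slow divergence integral of $X_{-\bar{b}}|_{\{\r=0\}}$ from $\{\x=-A_0\}$ to $\{\x=A_0\}$. The flat-in-$\e$ contribution of the fold passage, together with any flat shifts produced along the two regular branches, is collected into $\phi_2(\e)$, and the remaining $\Cl$-admissible corrections are collected into $\psi_2(Z_2,\e)$. The main obstacle is the fold passage itself: the flatness-in-$\e$ of its contribution to the $Z$-component has to be imported as a black box from the Krupa–Szmolyan analysis, after which everything else is bookkeeping in the exponential-type map calculus already established in \cref{sec:Exp_trans}.
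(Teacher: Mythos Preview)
Your proposal is essentially correct and follows the same three-phase decomposition (regular\,--\,fold\,--\,regular) that the paper uses, together with the same composition calculus from \cref{sec:Exp_trans}. The only noteworthy difference is in how the fold passage is packaged. The paper places the post-fold section $\Omega^{\ex}$ at $\{Z_2=-Z_{2,\ex}\}$, i.e.\ transverse to the fast foliation immediately after the jump, so that $\Pi_2^{fold}$ is merely a $(\ve_2>0)$-family of diffeomorphisms (with landing coordinate $\tilde a_2=\ve_2^{2/3}+O(\ve_2)$, not exponentially small); the flatness of the shift $\phi_2$ then arises \emph{a posteriori} from composing with the linear exponential contraction $\Pi_2^{reg_2}$ along the lower branch, via \cref{prop:compleft} and \cref{coro2}. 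You instead push $\Sigma_2$ past the re-capture point so that the fold map is itself already of exponential type with flat shift; this is legitimate but effectively absorbs part of the lower regular drift into phase (ii), and the exponential-type form you quote for the fold is not what \cite{Krupa3} gives directly---it requires exactly this extra piece of Fenichel contraction. Either bookkeeping yields the same conclusion; the paper's version has the advantage of separating cleanly what is the fold (a diffeomorphism) from what is normally hyperbolic contraction, so that the identification of $A_0$ with the slow divergence integral (argued in the paper by the Poincar\'e--Leontovich--Sotomayor route of \cref{prop:slowdiv} rather than by summing $I_1+I_2$) is transparent.
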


\begin{proof} 

To prove that $A_0$ is given by the slow divergence integral we proceed along the same reasoning as in \cref{prop:slowdiv}, so we do not repeat it here. In figure \cref{fig:ky3} we see the three transitions that we must consider.

\begin{figure}[htbp]\centering
  \begin{tikzpicture}
    \pgftext{\includegraphics[scale=1.5]{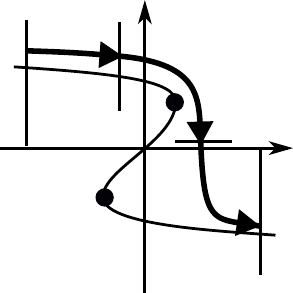}}
    \node at (0,2.5) {$z_2$};
    \node at (2.5,0) {$a_2$};
    \node at (-2.25,2.1) {$\Delta_2^{\en,-b_2}$};
    \node at (2.25,-2.1) {$\Delta_2^{\ex,-b_2}$};
    \node at (-.45,.35) {$\Omega^{\en}$};
    \node at (1.5,0.3) {$\Omega^{\ex}$};
    \end{tikzpicture}
  \caption{The three different transitions in which $\Pi_2^{-\y}$ is decomposed. The central transitions is locally an $A_2$ problem. The other two transitions at the sides are regular.}
  \label{fig:ky3}
\end{figure}

The three transitions are defined as

\eq{
\Pi_2^{reg_1} &:\Delta_2^{\en,-b_2}\to \Omega^{\en}\\
\Pi_2^{fold} &: \Omega^{\en}\to \Omega^{\ex}\\
\Pi_2^{reg_2} &:\Omega^{\ex}\to \Delta_2^{\ex,-b_2},
}

where we define $\Omega^{\en}$ and $\Omega^{\en}$ as

\eq{
  \Omega^{\en} &=\left\{ (\x,\e,Z_2)\in\R^3 \, | \, \x=-a_{2,\en}\right\}\\
  \Omega^{\ex} &=\left\{ (\x,\e,Z_2)\in\R^3 \, | \, Z_2=-Z_{2,\ex}\right\},
}

where $a_{2,\en}$ and $Z_{2,\ex}$ are sufficiently small positive constants. The total transition $\Pi_2^{+\y}$ is given by $\Pi_2^{\y}=\Pi_2^{reg_2}\circ\Pi_2^{fold}\circ\Pi_2^{reg_1}$. Recall from \cref{sec:Exp_trans} that if we want to write the transition  $\Pi_2^{+\y}$ as an exponential type map, we require that $\Pi_2^{reg_1}$ is expressed as an exponential type map with no shift. The transition $\Pi_2^{fold}$  is studied in e.g. \cite{JardonThesis,Krupa3}. In \cite{JardonThesis} is proved that there are local coordinates $(\bar Z_2,\ve)$ in $\Omega^{\en}$,  and $(\tilde a_2,\tilde\ve)$ in $\Omega^{\ex}$, such that the transition $\Pi_2^{fold}$ is given by

\eq{
\Pi_2^{fold}(\bar Z_2,\e)&=(\tilde a_2,\tilde\e)\\
&=\left(\e^{2/3}+O(\e), \e\right).
}

Assume now that we have characterized an invariant manifold $\M_{\e}^{fold}$ from $\Omega^{\en}$ to $\Omega^{\ex}$ via the map $\Pi_2^{fold}$. Now we want to ``extend'' $\M_{\e}^{fold}$ all the way up to the sections $\Delta_2^{\en,-b_2}$ and $\Delta_2^{\ex,-b_2}$ via transitions along normally hyperbolic regions of $\Gamma_2$. For this, it is more convenient to regard $\M_{\e}^{fold}$ as a graph $\zeta_2=\phi_{\e}(A_2)$ where $(\zeta_2,A_2)$ are local coordinates around the fold point $p_+$ and where $\phi_{\e}$ is a diffeomorphism for $\e>0$. In this way we can equivalently express the map $\Pi_2^{fold}$ as
\eq{
  \Pi_2^{fold}(\zeta,\e) &=(\tilde\zeta_2,\tilde\e)\\
  &=(\psi_{\e}(\zeta),\e)
}

where $\psi_{\e}$ is a diffeomorphism for $\e>0$ and only a homeomorphism for $\e=0$. Next, following \cref{sec:nf_reg} we can find coordinates $(Z_2,\e)$ in $\Delta_2^{\en,-b_2}$, and coordinates $(\tilde Z_2,\e)$ in $\Delta_2^{\ex,-b_2}$ in such a way that the transitions $\Pi_2^{reg_1}$ and $\Pi_2^{reg_2}$ are given as

\eq{
\Pi_2^{reg_1}(Z_2,\e) &=\left( Z_2\exp\left( -\frac{1}{\e}(A_0-a_{2,\en}) \right) \right) = (\bar Z_2,\e)\\
\Pi_2^{reg_2}(-Z_{2,\ex},\e) &=\left( -Z_{2,\ex}\exp\left( -\frac{1}{\e}(A_0-\tilde a_2) \right) \right) = (\tilde Z_2,\e).
}
  
\begin{remark}
  Recall that along normally hyperbolic slow manifolds, it is possible to make a normal form transformation in such a way that this transformation respects certain constraint or structure of the vector field, \cite{Bonckaert1,Bonckaert2}. In this particular case, we respect the choice of the invariant manifold $\M_{\e}^{fold}$.
\end{remark}

Next, we can compute the composition $\Pi_2^{-\y}=\Pi_2^{reg_2}\circ\Pi_2^{fold}\circ\Pi_2^{reg_1}$ by following \cref{sec:Exp_trans} and it thus follows that

\eq{
  \Pi_2^{-\y}(0,Z_2,\e)=\left(0,\bar\psi_{\e}+Z_2\exp\left( -\frac{1}{\e}(A_1+A_3+\e\psi_2) \right),\e \right),
}%
where $\bar\psi_{\e}=\psi_{\e}(0)\exp\left( -\frac{A_3}{\e} \right)$ and where $\psi_2=\psi_2(Z,\e)$ is a $\Cl$-admissible function. Note that $\bar\psi_{\e}$ is flat at $\e=0$.

\end{proof}

\subsection{Proof of \cref{teo:main}}\label{sec:proofofmain}
\renewcommand{\be}{\bar{\varepsilon}}
\newcommand{\by}{\bar{b}}
\newcommand{\A}{\mathcal{A}}

Let us first recall that, within the blow up space, we have three types of transitions according to the initial condition $b_1|_{\Delta_1^{\en}}$, namely
\begin{itemize}
  \item If $b_1|_{\Delta_1^{\en}}\in O(\ve_1^{2/5})$ then we construct a transition passing through the charts $K_{\en}\to K_{\be}\to K_{\ex}$.
  \item If $b_1|_{\Delta_1^{\en}}\notin O(\ve_1^{2/5})$ and $b_1|_{\Delta_1^{\en}}>0$ then we construct a transition passing through the charts $K_{\en}\to K_{+\bar{b}}\to K_{\ex}$.
  \item If $b_1|_{\Delta_1^{\en}}\notin O(\ve_1^{2/5})$ and $b_1|_{\Delta_1^{\en}}<0$ then we construct a transition passing through the charts $K_{\en}\to K_{-\bar{b}}\to K_{\ex}$.
   
\end{itemize}
In \cref{fig:cusp_comp} we give a qualitative diagram of the local transitions obtained and their relationship. 

\tikzstyle{stuff_fill}=[rectangle,draw,preaction={fill=white}]

\begin{figure}[htbp]\centering
  \begin{tikzpicture}
    \pgftext{\includegraphics[scale=0.75]{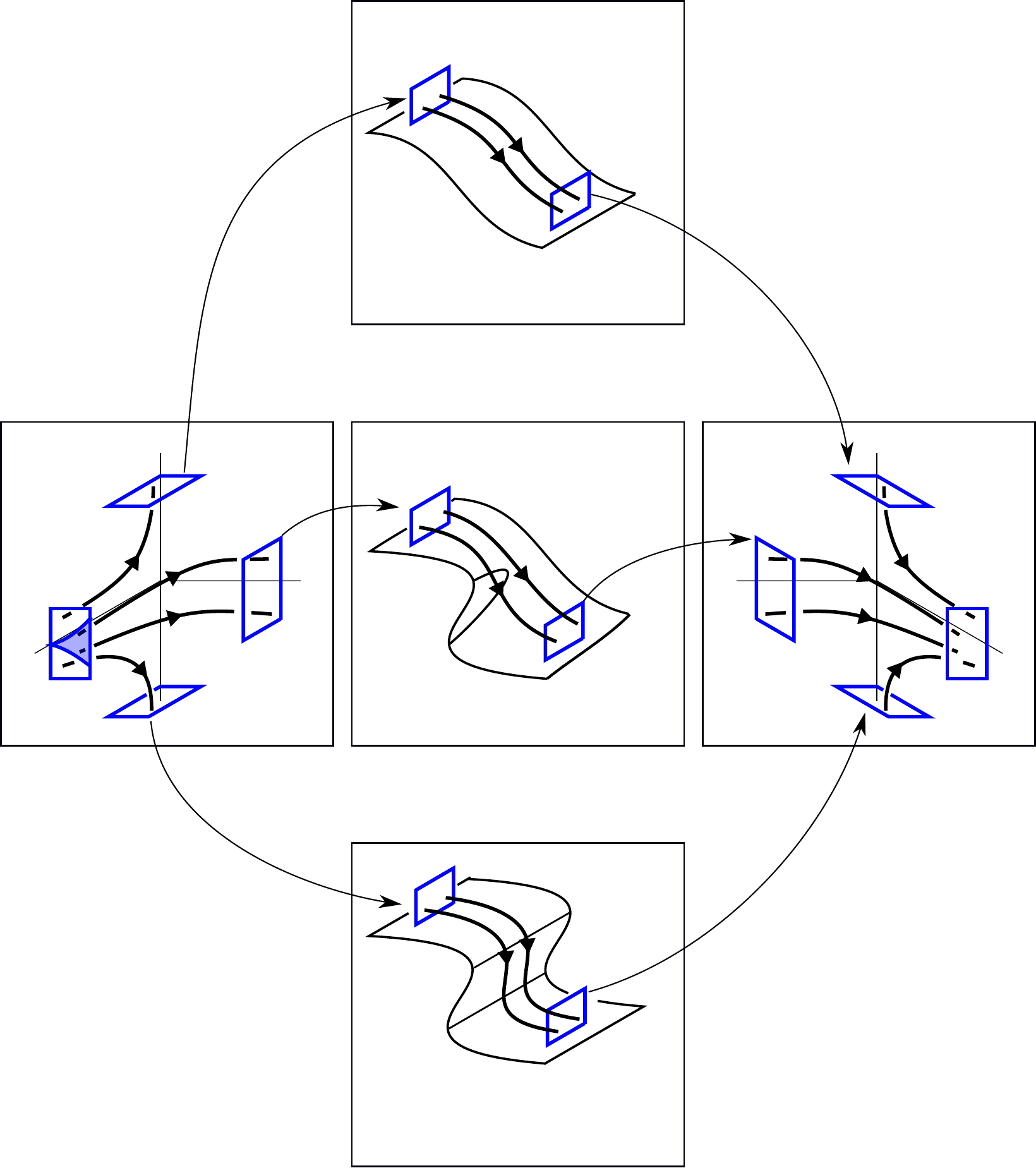}}
    \node at (-5,-2.3) {$K_{\en}$};
    \node at (0,-2.3) {$K_{\be}$};
    \node at (5,-2.3) {$K_{\ex}$};
    \node at (0,2.7) {$K_{+\by}$};
    \node at (0,-7.6) {$K_{-\by}$};

    \node at (-6.05,-.8) {\scriptsize{$r_1$}};
    \node at (-4.3,1.75) {\scriptsize{$b_1$}};
    \node at (-2.4,0) {\scriptsize{$\ve_1$}};

    \node at (-6.05+8.475,.05) {\scriptsize{$\ve_3$}};
    \node at (-4.3+8.65,1.75) {\scriptsize{$b_3$}};
    \node at (-2.4+8.45,-.9) {\scriptsize{$r_3$}};

    \node at (-5.6,-1.45) {\scriptsize{$\Delta_1^{\en}$}};
    \node at (-3.7,-1.65) {\scriptsize{$\Delta_1^{\ex,-b_1}$}};
    \node at (-3.65, 0.95) {\scriptsize{$\Delta_1^{\ex,+b_1}$}};
    \node at (-2.75, -.8) {\scriptsize{$\Delta_1^{\ex,\ve_1}$}};

    \node at (3,-.85) {\scriptsize{$\Delta_3^{\en,\ve_3}$}};
    \node at (5.6,-1.65) {\scriptsize{$\Delta_3^{\en,-b_3}$}};
    \node at (5.6, 0.95) {\scriptsize{$\Delta_3^{\en,+b_3}$}};
    \node at (5.8, -.1) {\scriptsize{$\Delta_3^{\ex}$}};

    \node at (-2.25,.85) [stuff_fill] {\scriptsize{$M_{\en}^{\be}$}};
    \node at (-3.65,3.85) [stuff_fill] {\scriptsize{$M_{\en}^{+\bar{b}}$}};
    \node at (3.45,2.95) [stuff_fill] {\scriptsize{$M_{+\bar{b}}^{\ex}$}};
    \node at (1.5,0.25) [stuff_fill] {\scriptsize{$M_{\bar\ve}^{\ex}$}};
    \node at (2.5,-3.95) [stuff_fill] {\scriptsize{$M_{-\bar{b}}^{\ex}$}};
    \node at (-3.5,-2.95) [stuff_fill] {\scriptsize{$M_{\en}^{-\bar{b}}$}};

    \node at (-.6,1.3) {\scriptsize{$\Delta_2^{\en,\ve_2}$}};
    \node at (1,-1.3) {\scriptsize{$\Delta_2^{\ex,\ve_2}$}};

    \end{tikzpicture}
  \caption{All the transitions obtained in the charts. We have to compose all such transitions through the matching maps $M_i^j$. A matching map $M_i^j$ relates the coordinates between the charts $K_i$ and $K_j$. }
  \label{fig:cusp_comp}
\end{figure}

Let us only detail the transition through the inner layer $\Delta^{\inn}$ corresponding to $b_1|_{\Delta_1^{\en}}\in O(\ve_1^{2/5})$, the other cases follow the same lines. 

The transition $\Pi^{\inn}:\Delta_1^{\inn}\to\Delta_2^{\ex}$ is given as

\renewcommand{\x}{a}
\renewcommand{\y}{b}

\eq{
  \Pi^{\inn} &= \Pi_{3}^{\ve_3}\circ M_{\bar\ve}^{\ex}\circ\Pi_{2}^{\ve_2}\circ M_{\en}^{\bar\ve}\circ\Pi_1^{\inn}
}

where the matching maps are obtained from the blow-up map. For example, to obtain the matching map from the chart $K_{\en}$ to the chart $K_{\be}$ we relate the two directional blow-up maps

\eq{
  \x=-r_1^3, \; \y = r_1^2 b_1, \; z= r_1z_1, \; \ve=r_1^5\ve_1
}

and 

\eq{
  \x=r_2^3 a_2, \; \y = r_2^2 b_2, \; z= r_2z_2, \; \ve=r_2^5.
}

Let us work out only with the $z$-component of the transitions as it is the only relevant one. Recall from \cref{Ken} that $\Pi_1^{\inn}$ is an exponential type map with no shift. Next, the composition $\Pi^{central}=M_{\bar\ve}^{\ex}\circ\Pi_{2}^{\ve_2}\circ M_{\en}^{\bar\ve}$ yields a diffeomorphism as $\Pi_2^{\ve_2}$ is a diffeomorphism, and the matching maps are also diffeomorphisms on their domain of definition. Next, the last transition $\Pi_3^{\ve_3}$ is an exponential type map with no shift, see \cref{Kex}. Therefore, following \cref{sec:Exp_trans} we have that $\Pi_3^{\ve_3}\circ\Pi^{central}\circ\Pi_1^{\inn}$ is an exponential type map of the form

\eq{
  \Pi^{\inn}_{Z_1}=\bar\phi(B_1,\ve_1)+Z_1\exp\left( -\frac{1}{\ve_1}\left( \bar \A(B_1,\ve_1)+\ve_1\bar\Psi(B_1,\ve_1,Z_1) \right) \right),
}

where $\bar \A>0$ and $\phi$ and $\Psi$ are $\Cl$ admissible functions. The differentiability of $\phi$ and $\Psi$ with respect to monomials is evident from the results of \cref{Ken}. By blowing down we obtain that the transition $\Pi^{\inn}:\Sigma^{\en}\to\Sigma^{\ex}$ (in a small neighborhood of the cusp point and within the inner layer as domain) reads as

\eq{
  \Pi^{inner}_{Z}=\phi(B,\ve)+Z\exp\left( -\frac{1}{\ve}\left(  \A(B,\ve)+\ve\bar\Psi(B,\ve,Z) \right) \right).
}

To obtain the transition $\Pi:\Sigma^-\to\Sigma^+$ we now need to compose $\Pi^{\inn}_{Z}$ with exponential type maps on the left and on the right corresponding to

\eq{
  \Pi^-:\Sigma^-\to\Sigma^{\en}\\
  \Pi^+:\Sigma^{\ex}\to\Sigma^+.
}

However, we must proceed with care. In order to express the transition $\Pi$ as an exponential type map, we need to choose appropriate coordinates on $\Sigma^{-}$ and on $\Sigma^+$ that respect the already chosen coordinates in $\Sigma^{\en}$ and in $\Sigma^{\ex}$. Fortunately, this is possible with the extensions of Bonckaert \cite{Bonckaert1,Bonckaert2} to the normalization results of Takens \cite{Takens_partially}.

For sake of clarity, let $(B_{\en},Z_{\en})$ be coordinates in $\Sigma^{\en}$ and $(B_{\ex}, Z_{\ex})$ be coordinates in $\Sigma^{\ex}$. We have shown that these coordinates can be chosen in such a way that the ``vertical'' component of the transition map $\Pi^{\inn}:\Sigma^{\en}\to\Sigma^{\ex}$ reads as

\eq{
  \Pi_{Z_{\en}}(B_{\en},Z_{\en},\ve)&=Z_{\ex}\\
                            &=\phi(B_{\en},\ve)+Z_{\en}\exp\left( -\frac{1}{\ve}\left(  \A(B_{\en},\ve)+\ve\bar\Psi(B_{\en},\ve,Z_{\en}) \right) \right).
}

In this case the invariant manifold, say $\M_{\ve}$, is given by $Z_{\en}=0$. Using \cite{Bonckaert1,Bonckaert2} we can find suitable coordinates $(B_-,Z_-)$ in $\Sigma^-$ in such a way that

\eq{
  \Pi^-_{Z_-}(B_-,Z_-,\ve)=Z_-\exp\left( -\frac{1}{\ve}(A_0) \right)=Z_{\en}.
}

In other words, there is a change of coordinates respecting the invariant manifold $\M_{\ve}$ under which the transition $\Pi^-$ is an exponential type map with no shift and linear. Similar arguments hold for the choice of coordinates in $\Sigma^+$. Finally, following \cref{sec:Exp_trans}, the composition $\Pi^+_{Z_+}\circ\Pi_{Z_{\en}}\circ\Pi^-_{Z_-}$ leads to the result.

\appendix
\renewcommand*{\thesection}{\Alph{section}}

\section{Exponential type functions}\label{sec:Exp_trans}
In this section, we discuss a particular type of function which will be found and used frequently throughout the main text. First, however, let us give two preliminary definitions. 
\begin{definition}[ $\Cl$-admissible function] Let $U\in\R^n$.  A function $f:\R^n\to\R$ is said to be a \emph{$\Cl$-admissible function} if $f$ is $\Cl$-smooth away form the origin (for any $\ell>0$), $\mathcal C^0$ at the origin and if for all $n_i\in \N$ and $n_i<\ell$, there exists an $N(n_i)\in\N$ such that
\eq{
\parcs{^{n_i} f}{U_i^{n_i}}\in O\left( U_i^{-N(n_i)} \right), \qquad \text{ as } U_i\to 0.
}
  
\end{definition}

Now, we define a particular type of differentiability. For this we need to extend the common concept of monomial. In our context, a monomial, e.g. in two variables, $\omega(u,v)$ is any expression of the form $u^\alpha v^\beta$ or of the form $u^\alpha(\ln v)^\beta$, with $\alpha,\beta\in\R$.  In general, if we let $u\in\R^m_{}$ and $v\in\R^n$, we allow a monomial $\omega$ to be any expression of the type $u^p(\ln v)^{q}$, where $u^p=u_1^{p_1}\cdots u_m^{p_m}$ and $(\ln v)^{q}=(\ln v_1)^{q_1}\cdots(\ln v_n)^{q_n}$. We note that these monomials are admissible functions.

\begin{definition}[$\Cl$-function with respect to monomials]\label{def:mon} Let $(U,V)\in\R^m\times \R^n$. We say that a function $f(U,V)$ is a $\Cl$-function with respect to a monomial $\omega$, if $f$ is $\Cl$ w.r.t. $V$ in a neighborhood of $0\in\R^n$, and if there is a quadrant $\mathcal U=[0,u_1)\times\cdots\times[0,u_n)\subset\R^m$ where the monomial $\omega$ is defined and such that the function $\tilde f(\omega,U,V)=f(U,V)$ is $\Cl$ with respect to $\omega$ in $\mathcal U$. Similarly, the function $f$ is said to be a $\Cl$-function with respect to the monomials $\omega_1,\ldots,\omega_s$ if there is a  quadrant $\mathcal U$ where the monomials are defined and such that the function $\tilde f(\omega_1,\ldots,\omega_s,U,V)=f(U,V)$ is $\Cl$ with respect to $\omega_1,\ldots,\omega_s$ in $\mathcal U$.
\end{definition}

Observe that a function $f$ which is differentiable w.r.t monomials is an admissible function. As an example, consider $f(U)=U_1\ln U_1\phi(U)$ where $\phi(U)$ is smooth. This function is smooth away from $U=0$ and $C^0$ at the origin. However, it is not differentiable w.r.t. $U_1$ at $U_1=0$ but it is differentiable with respect to $\omega=U_1\ln U_1$ at $\omega=0$.\bigskip

Let $V\in\R^m$, $Z\in\R$, and as usual $\ve$ denotes a small parameter. 

\renewcommand{\A}{\mathcal{A}}
\newcommand{\B}{\mathcal{B}}

  \begin{definition}[Exponential type function]\label{def:exptype} A function $D(V,Z,\ve)$ is called of exponential type if it has the following form

  \eq{\label{eq:defexp}
  D(V,Z,\ve)=\B(V,\ve)+Z\exp\left( -\frac{\A(V,\ve)+\Phi(V,\ve,Z)}{\ve} \right),
  }

  where $\A$ and $\B$, are $\Cl$ admissible functions with $\A>0$, and $\B(V,0)=0$; and where $\Phi$ is $\Cl$ in $z$ and $\Cl$ w.r.t. monomials of $(V,\ve)$ with $\Phi(V,0,Z)=0$. We distinguish two particular cases

  \begin{enumerate}
    \item The exponential type function $D$ is \emph{without shift} if $\B\equiv 0$.
    \item The exponential type function $D$ is \emph{linear} if $\Phi(V,Z,\ve)\equiv\Phi(V,\ve)$.  
  \end{enumerate}

  \end{definition}

\begin{remark} Given a function $D$ and if it is of exponential type, the representation of $D$ is unique in the sense that all the functions in r.h.s of \cref{eq:defexp} are computable from $D$. In fact

\eq{
  \B&=D(V,0,\ve)\\
  \A&=\lim_{Z\to 0}\left( -\ve\ln\left(\frac{D(V,Z,\ve)-D(V,0,\ve)}{Z}\right) \right)\\
  \Phi&=-\ve\ln\left(\frac{D(V,Z,\ve)-D(V,0,\ve)}{Z}\right)-\A.
}

\end{remark}

We want to study the scenario where we have to compose $D$ with some other functions and want to keep the exponential type structure. To be more precise, we consider $D$ as an $(V,\ve)-$parameter family of functions (in $Z$) and compose it with a $(V,\ve)-$parameter family of diffeomorphisms $\Psi_{(V,\ve)}$ on $\R$.

\begin{proposition}[Composition on the left]\label{prop:compleft} Let $\Psi_{(V,\ve)}:\R\to\R$ be a family of diffeomorphisms, and let $D$ be an exponential type function. Then, the composition $\Psi_{(V,\ve)}\circ D$ is also of exponential function of the form
\eq{
  \tilde D=\tilde \B(V,\ve)+Z\exp\left( -\frac{\A(V,\ve)+\tilde\Phi(V,Z,\ve)}{\ve} \right),
}
where $\tilde\B$ and $\tilde\Phi$ are admissible functions. 
\end{proposition}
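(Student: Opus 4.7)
The idea is to isolate the shift $\tilde{\B}$ as the image of $\B$ under $\Psi$, and then absorb the Jacobian-like prefactor of $\Psi$ into the exponential by taking a logarithm and multiplying by $\ve$. First, I would apply Hadamard's lemma to the (parameter-dependent) diffeomorphism $w\mapsto \Psi_{(V,\ve)}(w)$ at the base point $w=\B(V,\ve)$, writing
\eq{
 \Psi_{(V,\ve)}(w)=\Psi_{(V,\ve)}(\B(V,\ve))+(w-\B(V,\ve))\,Q(w,V,\ve),
}
where $Q$ is as smooth as $\Psi$ jointly in all arguments and satisfies $Q(\B(V,\ve),V,\ve)=\Psi'_{(V,\ve)}(\B(V,\ve))\neq 0$ (WLOG positive, otherwise absorb a sign into $Z$).

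Next, I would specialise to $w=D(V,Z,\ve)$, so that $w-\B=Z\exp(-(\A+\Phi)/\ve)$, and propose the two candidates
\eq{
  \tilde{\B}(V,\ve)&:=\Psi_{(V,\ve)}(\B(V,\ve)),\\
  \tilde{\Phi}(V,\ve,Z)&:=\Phi(V,\ve,Z)-\ve\ln Q\!\left(\B(V,\ve)+Ze^{-(\A+\Phi)/\ve},V,\ve\right).
}
A direct calculation then shows $\Psi_{(V,\ve)}\circ D=\tilde{\B}+Z\exp(-(\A+\tilde{\Phi})/\ve)$, which is the required form with the same $\A$. The fact that $\B(V,0)=0$ gives $\tilde{\B}(V,0)=\Psi_{(V,0)}(0)$, and if the composition is to send $Z=0$ to a shift vanishing at $\ve=0$ one may absorb $\Psi_{(V,0)}(0)$ into the coordinate (or assume it vanishes, as happens in the applications in the main text).

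The main obstacle is verifying that $\tilde{\B}$ is $\Cl$-admissible and that $\tilde{\Phi}$ is $\Cl$ in $Z$ and $\Cl$ with respect to the same monomials in $(V,\ve)$ as $\Phi$, together with $\tilde{\Phi}(V,0,Z)=0$. For $\tilde{\B}$, admissibility is preserved under composition with a smooth map on the right, since $\Psi$ has bounded derivatives on any compact set. For $\tilde{\Phi}$, the key observation is that $Ze^{-(\A+\Phi)/\ve}$ is $\ve$-flat at $\ve=0$ (because $\A>0$), so that $Q(\B+Ze^{-(\A+\Phi)/\ve},V,\ve)=Q(\B,V,\ve)+(\text{flat in }\ve)$. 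Since $Q(\B,V,\ve)=\Psi'_{(V,\ve)}(\B)$ stays in a compact set bounded away from $0$, we can compose with $\ln$ and obtain $\ln Q=\ln\Psi'_{(V,\ve)}(\B)+R(V,\ve,Z)$, where $R$ is flat at $\ve=0$. Multiplying by $\ve$ and subtracting from $\Phi$, the first summand is $\Cl$ in $(V,\ve)$ (hence $\Cl$ w.r.t.\ the monomials used for $\Phi$) and the second is again flat at $\ve=0$, yielding the vanishing $\tilde{\Phi}(V,0,Z)=\Phi(V,0,Z)=0$ and the required regularity. This is essentially a bookkeeping step, but one must be careful that the monomial structure is preserved; this is the point where the precise notion of $\Cl$-function with respect to monomials (\cref{def:mon}) is used, since $\ln\Psi'_{(V,\ve)}(\B(V,\ve))$ is a smooth function of $\B$, which in turn is admissible.
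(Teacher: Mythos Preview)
Your proof is correct and follows essentially the same route as the paper: both apply a Hadamard-type expansion $\Psi(a+b)=\Psi(a)+C(1+\psi(a,b))b$ at $a=\B$, $b=Z\exp(-(\A+\Phi)/\ve)$, set $\tilde\B=\Psi(\B)$, and absorb the positive prefactor into the exponent via $\tilde\Phi=\Phi-\ve\ln(C(1+\psi))$. Your treatment of the admissibility and monomial-regularity of $\tilde\B$ and $\tilde\Phi$ is in fact more detailed than the paper's, which leaves those verifications implicit.
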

\begin{proof}
  Let us simplify the notation by writing $\Psi=\Psi_{(V,\ve)}$. Since $\Psi$ is a diffeomorphism we can write $\Psi(a+b)=\Psi(a)+C(1+\psi(a,b))b$, near $b=0$, with $\psi$ a $\Cl$ function such that $\psi(a,0)=0$ and with $C>0$. Then we have
  \eq{
  \Psi\circ D(z)  &=\Psi\left( \B+Z\exp\left( -\frac{\A+\Phi}{\ve} \right)  \right)\\
          &=\Psi(\B)+C(1+\psi(V,Z,\ve))Z\exp\left( -\frac{\A+\Phi}{\ve} \right).
  }

  Since $C>0$ we can take the logarithm of $C(1+\psi(V,Z,\ve))$ and then we have
  \eq{
  \Psi\circ D(z)  &= \Psi(\B)+\exp(\ln(C(1+\psi))Z\exp\left( -\frac{  \A+\Phi}{\ve} \right)\\
          &= \Psi(\B)+Z\exp\left( -\frac{ \A+\Phi+\ve\ln(C(1+\psi)}{\ve} \right).
  }

  The result is obtained by setting $\tilde \B=\Psi(\B)$ and $\tilde\Phi=\Phi+\ve\ln(C(1+\psi)$.
\end{proof}

\begin{proposition}[Composition on the right]\label{prop:compright} Let $\Psi_{(V,\ve)}:\R\to\R$ be a family of diffeomorphisms with no shift, that is $\Psi_{(V,\ve)}(0)=0$ for all $(V,\ve)$, and let $D$ be an exponential type function. Then, the composition $D\circ\Psi_{(V,\ve)}$ is also of exponential function of the form
\eq{
  \tilde D=\tilde \B(V,\ve)+Z\exp\left( -\frac{\A(V,\ve)+\tilde\Phi(V,Z,\ve)}{\ve} \right),
}
where $\tilde\B$ and $\tilde\Phi$ are admissible functions. 
\end{proposition}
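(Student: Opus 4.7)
The plan is to mirror the proof of \cref{prop:compleft}, but exploit the no-shift hypothesis on $\Psi_{(V,\ve)}$ to pull $Z$ out as a multiplicative factor rather than as an increment. Write $\Psi=\Psi_{(V,\ve)}$ for brevity. Since $\Psi(0)=0$ and $\Psi$ is a diffeomorphism depending $\Cl$-smoothly on $(V,\ve)$, one can write
\eqn{
\Psi(Z)=C(V,\ve)\bigl(1+\psi(V,Z,\ve)\bigr)Z,
}
with $C(V,\ve)=\partial_Z\Psi_{(V,\ve)}(0)>0$ (assuming, as in \cref{prop:compleft}, orientation preservation) and $\psi$ a $\Cl$ function with $\psi(V,0,\ve)=0$.

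Substituting this factorization into the exponential type formula for $D$ yields
\eqn{
D(V,\Psi(Z),\ve)=\B(V,\ve)+C(V,\ve)(1+\psi(V,Z,\ve))\,Z\,\exp\!\left(-\frac{\A(V,\ve)+\Phi(V,\Psi(Z),\ve)}{\ve}\right).
}
Since $C>0$ and $\psi(V,0,\ve)=0$, the factor $C(1+\psi)$ is positive near $Z=0$, and one absorbs it into the exponent through $C(1+\psi)=\exp\ln(C(1+\psi))$. This gives
\eqn{
D(V,\Psi(Z),\ve)=\tilde\B(V,\ve)+Z\exp\!\left(-\frac{\A(V,\ve)+\tilde\Phi(V,Z,\ve)}{\ve}\right),
}
with the identifications $\tilde\B=\B$ and
\eqn{
\tilde\Phi(V,Z,\ve)=\Phi(V,\Psi(Z),\ve)-\ve\ln\bigl(C(V,\ve)(1+\psi(V,Z,\ve))\bigr).
}

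It then remains to verify that the new data fits \cref{def:exptype}. The shift $\tilde\B=\B$ already satisfies $\tilde\B(V,0)=0$ and $\A>0$ is untouched. The vanishing $\tilde\Phi(V,Z,0)=0$ holds because $\Phi(V,\cdot,0)\equiv 0$ by assumption on $D$, while the correction $\ve\ln(\cdot)$ is killed by the explicit $\ve$ factor at $\ve=0$ (note that $C>0$ and $1+\psi$ stays bounded away from $0$ near $Z=0$, so the logarithm is well defined). The $\Cl$-regularity of $\tilde\Phi$ in $Z$ follows from the fact that $\Phi$ and $\Psi$ are $\Cl$ in $Z$ and that $\ln(1+\psi)$ is $\Cl$ near $\psi=0$. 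For the $\Cl$-dependence with respect to monomials in $(V,\ve)$, observe that $\Psi$ is smooth in $(V,\ve)$, so the composition $\Phi(V,\Psi(Z),\ve)$ inherits the monomial-differentiability of $\Phi$, and the added term $\ve\ln C(V,\ve)+\ve\ln(1+\psi(V,Z,\ve))$ is smooth in $(V,\ve)$ and $(V,Z,\ve)$ respectively.

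The only thing to monitor, rather than a genuine obstacle, is the preservation of admissibility under the composition $\Phi\circ\Psi$ and the logarithmic correction: the logarithm can in principle damage admissibility bounds if its argument touches zero, which is precisely why the positivity of $C$ and the vanishing of $\psi$ at $Z=0$ are both essential. Once those are in place, the argument is a direct variant of the proof of \cref{prop:compleft}, with the multiplicative decomposition of $\Psi$ playing the role that the additive expansion $\Psi(a+b)=\Psi(a)+C(1+\psi)b$ played there.
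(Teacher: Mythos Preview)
Your proof is correct and follows essentially the same approach as the paper: factor $\Psi(Z)=C(1+\psi)Z$ using the no-shift hypothesis, substitute into $D$, and absorb the positive prefactor into the exponent via a logarithm. Your version is in fact more careful than the paper's, since you explicitly verify the conditions of \cref{def:exptype} for $\tilde\Phi$ (vanishing at $\ve=0$, regularity in $Z$ and in monomials), whereas the paper simply records the formula for $\tilde\Phi$ and stops.
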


\begin{proof}
  Let us simplify the notation by writing $\Psi=\Psi_{(V,\ve)}$. Since $\Psi(0)=0$ we can write $\Psi(z)=C(1+O(z))z$ with $C>0$. Then we have

  \eq{
    D\circ\Psi(z) &=D(C(1+O(z))z)=\B(V,\ve)+C(1+O(z))z\exp\left( -\frac{\A(V,\ve)+\Phi(V,\ve,\Psi)}{\ve} \right)\\
            &= \B(V,\ve)+z\exp\left( -\frac{\A(V,\ve)+\Phi(V,\ve,\Psi)+\ve\ln(C(1+O(z)))}{\ve} \right).
  }

  The result then is obtained by setting $\tilde\Phi=\Phi(V,\ve,\Psi)+\ve\ln(C(1+O(z)))$.
\end{proof}

\begin{remark} If we want the composition $\Pi\circ\Psi_{(V,\ve)}$ to be of exponential type, the family $\Psi_{(V,\ve)}$ cannot be arbitrary. In order to preserve the ``exponential structure'', $\Psi_{(V,\ve)}$ should satisfy the hypothesis of \cref{prop:compright}. In \cref{coro2} we show a particular case in which the diffeomorphism $\Psi$ can have a shift and yet preserve the structure of the exponential type function.
\end{remark}

Let us proceed by presenting a couple of useful corollaries.

\begin{corollary}\label{coro1} Let $D_1$ and $D_2$ be two exponential type functions of the form
\eq{
D_1(V,Z,\ve) &= Z\exp\left( -\frac{\A_1(V,\ve)+\Phi_1(V,Z,\ve)}{\ve} \right)\\
D_2(V,Z,\ve) &= \B_2(V,\ve)+Z\exp\left( -\frac{\A_2(V,\ve)+\Phi_2(V,Z,\ve)}{\ve} \right),
}

that is, $D_1$ is an exponential type function with no shift. Then $D_2\circ D_1$ is an exponential type function.
  
\end{corollary}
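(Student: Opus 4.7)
The plan is to proceed by direct substitution followed by identification of terms, rather than by invoking one of the composition propositions. Writing $D_1(V,Z,\ve)=Z\exp\bigl(-(\A_1+\Phi_1)/\ve\bigr)$ and substituting into $D_2$, I would compute
\[
D_2\bigl(V,D_1(V,Z,\ve),\ve\bigr)=\B_2(V,\ve)+D_1\cdot\exp\!\left(-\tfrac{\A_2(V,\ve)+\Phi_2(V,D_1,\ve)}{\ve}\right),
\]
and then merge the two exponentials using the factorization of $D_1$ to obtain
\[
D_2\circ D_1=\B_2(V,\ve)+Z\exp\!\left(-\tfrac{\bigl(\A_1(V,\ve)+\A_2(V,\ve)\bigr)+\bigl(\Phi_1(V,Z,\ve)+\Phi_2(V,D_1,\ve)\bigr)}{\ve}\right).
\]
This already displays the expected exponential-type structure with candidates $\tilde\B:=\B_2$, $\tilde\A:=\A_1+\A_2$, and $\tilde\Phi(V,Z,\ve):=\Phi_1(V,Z,\ve)+\Phi_2\bigl(V,D_1(V,Z,\ve),\ve\bigr)$.

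Next I would check that this data fits Definition~\ref{def:exptype}. Positivity $\tilde\A>0$ is immediate from $\A_1,\A_2>0$, and $\tilde\B$ inherits $\Cl$-admissibility together with $\tilde\B(V,0)=0$ from $\B_2$. For $\tilde\Phi$, the vanishing condition $\tilde\Phi(V,0,Z)=0$ at $\ve=0$ follows because $\Phi_1(V,0,Z)=0$ and, since $D_1$ is $C^\infty$-flat in $\ve$ at $\ve=0$, $D_1(V,Z,0)=0$ so that $\Phi_2(V,D_1(V,Z,0),0)=\Phi_2(V,0,0)=0$.

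The main obstacle, and where the argument deserves care, is verifying that $\tilde\Phi$ is $\Cl$ in $Z$ and $\Cl$ with respect to the relevant monomials of $(V,\ve)$. The $\Phi_1$ summand is fine by hypothesis. For $\Phi_2(V,D_1,\ve)$, the subtlety is that $D_1$ itself contains the non-smooth factor $\exp(-\A_1/\ve)$. The key observation I would exploit is that $D_1(V,Z,\ve)$ is smooth in $Z$ for $\ve>0$ and $C^\infty$-flat in $\ve$ at $\ve=0$, uniformly in $(V,Z)$ on compact sets where $\A_1$ stays bounded below by a positive constant. Consequently any derivative of $\Phi_2(V,D_1,\ve)$ with respect to $Z$, or with respect to a monomial in $(V,\ve)$, produced by the chain rule picks up at worst polynomial factors in $1/\ve$ multiplied by the flat factor $\exp(-\A_1/\ve)$, which remain bounded (in fact flat) as $\ve\to 0$. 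This absorbs any singular behaviour, so the composed function $\Phi_2(V,D_1,\ve)$ is $\Cl$ in $Z$ and $\Cl$ with respect to the same monomials that controlled $\Phi_1$ and $\Phi_2$ originally.

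Putting these pieces together yields the desired representation, and hence $D_2\circ D_1$ is an exponential type function in the sense of Definition~\ref{def:exptype}. I would note in passing that this corollary is essentially the reason why one insists on \emph{no shift} in the hypothesis on $D_1$: if $\B_1\not\equiv 0$ then substitution would produce a non-flat argument in $\Phi_2$, breaking the identification of the coefficient $\A_1+\A_2$ in the exponent.
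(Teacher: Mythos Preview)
Your proof is correct. The direct substitution you perform is exactly the computation that underlies Proposition~\ref{prop:compright}: an exponential type function with no shift is, for each fixed $(V,\ve)$, a map $Z\mapsto D_1(V,Z,\ve)$ with $D_1(V,0,\ve)=0$, and the paper intends the corollary to follow by viewing $D_1$ as the family $\Psi_{(V,\ve)}$ in that proposition. Your argument simply unpacks this, and in doing so you make explicit the identification $\tilde\A=\A_1+\A_2$, which in the proposition's formulation is hidden inside the term $\ve\ln C$ (since here $C=\exp(-(\A_1+\Phi_1(V,0,\ve))/\ve)$ is not bounded away from zero as $\ve\to 0$). Your direct route therefore has the minor advantage of displaying the additive structure of $\tilde\A$ transparently, whereas invoking Proposition~\ref{prop:compright} verbatim would require a short extra remark to separate $\A_1$ from $\tilde\Phi$.

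Two small points of polish. First, the vanishing $\tilde\Phi|_{\ve=0}=0$ follows already from $\Phi_2(V,\cdot,0)\equiv 0$; you do not need to argue that $D_1(V,Z,0)=0$. Second, watch the argument ordering: you define $\tilde\Phi(V,Z,\ve)$ but then write the vanishing condition as $\tilde\Phi(V,0,Z)=0$; keep the slots consistent with your own convention.
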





\begin{corollary}\label{coro2} Let $D_1$ and $D_2$ be two exponential type functions with $D_2$ linear, this is
\eq{
D_1(V,Z,\ve) &= \B_1(V,\ve)+Z\exp\left( -\frac{\A_1(V,\ve)+\Phi_1(V,Z,\ve)}{\ve} \right)\\
D_2(V,Z,\ve) &= \B_2(V,\ve)+Z\exp\left( -\frac{\A_2(V,\ve)}{\ve} \right).
}

Then the composition $D_2\circ D_1$ is of exponential type.
\end{corollary}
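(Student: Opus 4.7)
The plan is to obtain the composition by a direct substitution and then read off the three data $(\tilde\A,\tilde\B,\tilde\Phi)$ that witness the exponential–type structure, checking admissibility for each of them. The reason this is cleaner than the general two–sided composition is that $D_2$ is linear, i.e.\ its $\Phi$–term vanishes, so no nested expression ever enters the exponent.

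First I would simply compute
\eqn{
D_2\circ D_1(V,Z,\ve)
 &= \B_2(V,\ve)+D_1(V,Z,\ve)\exp\!\left(-\frac{\A_2(V,\ve)}{\ve}\right)\\
 &= \B_2(V,\ve)+\B_1(V,\ve)\exp\!\left(-\frac{\A_2(V,\ve)}{\ve}\right)\\
 &\qquad+Z\exp\!\left(-\frac{\A_1(V,\ve)+\A_2(V,\ve)+\Phi_1(V,Z,\ve)}{\ve}\right).
}
This immediately suggests the candidates
\eqn{
\tilde\B(V,\ve) &=\B_2(V,\ve)+\B_1(V,\ve)\exp\!\left(-\tfrac{\A_2(V,\ve)}{\ve}\right),\\
\tilde\A(V,\ve) &=\A_1(V,\ve)+\A_2(V,\ve),\\
\tilde\Phi(V,Z,\ve) &=\Phi_1(V,Z,\ve),
}
so that $D_2\circ D_1=\tilde\B+Z\exp(-(\tilde\A+\tilde\Phi)/\ve)$, which has precisely the form required by \cref{def:exptype}.

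Next I would verify each clause of \cref{def:exptype} in turn. The positivity and $\Cl$-admissibility of $\tilde\A$ is inherited from $\A_1$ and $\A_2$. The conditions on $\tilde\Phi$ (namely $\Cl$ in $Z$, $\Cl$ with respect to monomials in $(V,\ve)$, and $\tilde\Phi(V,0,Z)=0$) are inherited verbatim from $\Phi_1$. The only nontrivial point is the admissibility of $\tilde\B$, which I expect to be the main obstacle: one must observe that $\A_2(V,0)>0$ guarantees that $\exp(-\A_2(V,\ve)/\ve)$ is a $\mathcal C^\infty$ function of $(V,\ve)$ which is flat at $\ve=0$, so the product $\B_1\exp(-\A_2/\ve)$ is $\Cl$–admissible with value $0$ at $\ve=0$. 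Combining this with $\B_2(V,0)=0$ gives $\tilde\B(V,0)=0$, as required.

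Finally I would note that this argument is in fact a specialisation of the scheme used in \cref{coro1}: when the second factor is linear, the contribution that previously entered as a new piece of the exponent degenerates into a mere additive, exponentially flat correction of the shift $\B_2$. This is what allows $D_1$ to have a nontrivial shift $\B_1$ without destroying the structure, which is precisely the point announced in the remark following \cref{prop:compright}.
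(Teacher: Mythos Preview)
Your proposal is correct and is essentially the intended argument: the paper states \cref{coro2} without an explicit proof, leaving it as an immediate consequence of direct substitution (equivalently, of \cref{prop:compleft} with the affine map $\Psi=D_2$). Your identification $\tilde\A=\A_1+\A_2$, $\tilde\Phi=\Phi_1$, $\tilde\B=\B_2+\B_1\exp(-\A_2/\ve)$ and your verification that the exponentially flat correction keeps $\tilde\B$ admissible with $\tilde\B(V,0)=0$ are exactly the points that need to be checked.
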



  

It is useful to consider the following: let $X(V,Z,\ve)$ be a given vector field on $\R^{m+2}$, and let $\Sigma_0$ and $\Sigma_1$ be codimension $1$ subsets of $\R^{m+2}$ which are transversal to the flow of $X$. For the moment it is sufficient to think of a section $\Sigma_i$ given by $\left\{ V_j=v_0 \right\}$ or by $\left\{ \ve=\ve_0 \right\}$ with $v_0$ and $\ve_0$ fixed constants. Induced from \cref{def:exptype} we then have the following.

\begin{definition}[Exponential type transition] A transition $\Pi:\Sigma_0\to\Sigma_1$ is called of exponential type if and only if its $Z$-component is an exponential type function. This is, an exponential type transition is of the form
\eq{
  \Pi(V,Z,\ve)&= (G,D,H)\\
  &=\left( G(V,\ve), \, \B(V,\ve)+Z\exp\left( -\frac{\A(V,\ve)+\Phi(V,Z,\ve)}{\ve}\right),\,  H(V,\ve) \right) ,
}

where $G:\R^{m+1}\to\R^m$ and $H:\R^{m+1}\to\R$ are $\Cl$ with $G(V,0)=V$ and $H(V,0)=0$; where  $A$, $B$ and $\Phi$ are $\Cl$-admissible functions. The names exponential type transition with no shift and linear are inherited as well from the type of $D$.
  
\end{definition}

Suppose now that $X$ is a given vector field on $\R^{m+2}$, as above, and let $\Sigma_i$ with $i=0,1,2,3,4,5$, be disjoint sections which are all transversal to the flow of $X$. Assume that $X$ induces exponential type transitions $\Pi_i:\Sigma_{i-1}\to\Sigma_i$ with $i=1,2,3,4,5$ of the following form

\begin{enumerate}
  \item $\Pi_1$ is with no shift and linear
  \item $\Pi_2$ is with no shift 
  \item $\Pi_3$ is a general diffeomorphism 
  \item $\Pi_4$ is with no shift 
  \item $\Pi_5$ is with no shift and linear. 
\end{enumerate}

We need to show that the composition of all these five maps is an exponential type transition.

\begin{proposition}\label{prop:expcomp} Let $\Pi_i:\Sigma_{i-1}\to\Sigma_i$ as described above. Then the composition $\Pi=\Pi_5\circ\Pi_4\circ\Pi_3\circ\Pi_2\circ\Pi_1$ is an exponential type map of the form
  \eq{ 
    \Pi=\left(\tilde G(V,\ve),\, \tilde \B(V,\ve)+Z\exp\left( -\frac{\tilde \A(V,\ve)+\tilde\Phi(V,Z,\ve)}{\ve} \right), \, \tilde H(V,\ve) \right),
  }

  where $\tilde \A=\A_1+\A_2+\A_4+\A_5$.
\end{proposition}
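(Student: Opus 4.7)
The plan is to compose the $\Pi_i$ iteratively from right to left, tracking at each stage how the exponent $\A$ accumulates. Since each $\Pi_i$ has its $V$- and $\ve$-components depending only on $(V,\ve)$, these compose to smooth maps $\tilde G(V,\ve)$ and $\tilde H(V,\ve)$ and can be treated as parameters; the non-trivial work is on the $Z$-component, where for fixed $(V,\ve)$ each $\Pi_i$ acts as a map of $\R$.

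First, I would compose $\Pi_2 \circ \Pi_1$ directly. Since both are no-shift, the substitution $\Pi_1(Z) = Z e^{-(\A_1+\Phi_1)/\ve}$ into $\Pi_2$ yields a no-shift exponential type map with $\A^{(2)} = \A_1 + \A_2$ and $\Phi^{(2)} = \Phi_1 + \Phi_2 \circ \Pi_1$, whose admissibility follows from that of the factors. Next, I would left-compose with $\Pi_3$, viewed as a $(V,\ve)$-parameter family of diffeomorphisms on $\R$; this is exactly the setting of \cref{prop:compleft}. The result is an exponential type map with shift $\Pi_3(V,0,\ve)$ and the same $\A^{(3)} = \A_1 + \A_2$, because a general diffeomorphism only modifies $\B$ and $\Phi$.

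The third step is the crucial one: left-composing with the no-shift exponential $\Pi_4$ when the argument $D_{(3)}$ now carries a shift. This case is not directly covered by \cref{coro1}, whose composition order is the reverse. I would proceed by direct expansion: with $D_{(3)}(Z) = \B_{(3)} + Z e^{-(\A^{(3)}+\Phi^{(3)})/\ve}$ and $\Pi_4(u) = u e^{-(\A_4+\Phi_4(u))/\ve}$, one obtains
\eqn{
(\Pi_4 \circ D_{(3)})(Z) = \B_{(3)}\,e^{-(\A_4+\Phi_4(D_{(3)}))/\ve} + Z\,e^{-(\A^{(3)}+\A_4+\Phi^{(3)}+\Phi_4(D_{(3)}))/\ve}.
}
The new shift is the value at $Z = 0$, namely $\B^{(4)} = \B_{(3)}\,e^{-(\A_4+\Phi_4(\B_{(3)}))/\ve}$, which is flat in $\ve$ (since $\A_4 > 0$) hence admissible and vanishes at $\ve = 0$. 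The cross term $\B_{(3)}[e^{-(\A_4+\Phi_4(D_{(3)}))/\ve} - e^{-(\A_4+\Phi_4(\B_{(3)}))/\ve}]$ must then be absorbed into the $Z$-coefficient via a mean-value expansion in $D_{(3)} - \B_{(3)} = Z e^{-(\A^{(3)}+\Phi^{(3)})/\ve}$, which is flat in $\ve$. This writes the cross term as $Z$ times an exponentially flat factor, whose logarithm can be absorbed into a redefined $\tilde\Phi^{(4)}$, yielding the exponential type form with $\A^{(4)} = \A_1 + \A_2 + \A_4$.

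Finally, I would left-compose with $\Pi_5$ (no-shift and linear). The calculation mirrors Step 3 but simplifies because $\Phi_5$ is independent of $Z$, and gives $\tilde\A = \A_1 + \A_2 + \A_4 + \A_5$. The hardest part is the bookkeeping in Step 3: one must verify that, after dividing the cross term by $Z$, the result has the form $e^{-(\cdot)/\ve}$ with a $\Cl$-admissible exponent. The delicate point is that differentiating $e^{-\Phi_4(u)/\ve}$ in $u$ produces an unwanted factor $1/\ve$, which is compensated by the super-exponential flatness of $e^{-\A^{(3)}/\ve}$; propagating $\Cl$-differentiability with respect to monomials through this cancellation is the technical heart of the argument.
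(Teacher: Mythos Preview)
Your approach is correct and follows the same overall strategy as the paper: compose step by step, invoke \cref{prop:compleft} for the diffeomorphism $\Pi_3$, and then handle the composition of a no-shift exponential on the left of a shifted exponential by direct expansion. The only difference is the grouping: the paper first combines $\Pi_5\circ\Pi_4$ into a single no-shift exponential and $\Pi_3\circ\Pi_2\circ\Pi_1$ into a single shifted exponential, so that the ``hard step'' (your Step~3) is performed only once at the end, whereas you do it twice (for $\Pi_4$ and then again for $\Pi_5$). Your version is in fact more explicit about this hard step than the paper, which dispatches it by a somewhat loose appeal to \cref{prop:compleft}; your mean-value argument for the cross term and the remark about the $1/\ve$ factor being compensated by the flatness of $e^{-\A^{(3)}/\ve}$ are exactly the points that need checking.
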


\begin{proof}
  Let us write each of the transitions as follows.

  \begin{enumerate}
    \item $\Pi_1(V,Z,\ve)=\left( G_1,D_1,H_1 \right)=\left( G_1,Z\exp\left( -\frac{\A_1(V,\ve)}{\ve} \right), H_1 \right)$ 
    \item $\Pi_2(V,Z,\ve)=\left( G_2,D_2,H_2 \right)=\left( G_2,Z\exp\left( -\frac{\A_2(V,\ve)+\Phi_2(V,Z,\ve)}{\ve} \right), H_2 \right)$ 
    \item $\Pi_3(V,Z,\ve)=\left( G_3,D_3,H_3 \right)$
    \item $\Pi_4(V,Z,\ve)=\left( G_4,D_4,H_4 \right)=\left( G_4,Z\exp\left( -\frac{\A_4(V,\ve)+\Phi_4(V,Z,\ve)}{\ve} \right), H_4 \right)$
    \item $\Pi_5(V,Z,\ve)=\left( G_5,D_5,H_5 \right)=\left( G_5,Z\exp\left( -\frac{\A_5(V,\ve)}{\ve} \right), H_5 \right)$ 
  \end{enumerate}

  For brevity let $\Pi_2\circ\Pi_1=(\tilde G_2,\tilde D_2,\tilde H_2)$. Then we have
  \eq{
  &(\tilde G_2,\tilde D_2,\tilde H_2) =\\
  &  \left( G_2(G_1,H_1),D_1\exp\left( -\frac{\A_2(G_1,H_1)+\Phi_2(G_1,D_1,H_1)}{H_1} \right), H_2(G_1,H_1)  \right).
  }

  Now, we take care only of the $Z$-component of the composition $\Pi_2\circ\Pi_1$. From the hypothesis on $G_1$ and $H_1$ we can write $G_1=V+O(\ve)$ and $H_1=\alpha\ve(1+O(\ve))$ with $\alpha>0$, then 
  \eq{
    \tilde D_2= Z\exp\left( -\frac{\A_1(V,\ve)+\A_2(V,\ve)+\bar\Phi_2(V,Z,\ve)}{\ve} \right),
  }

  where we have gathered in $\bar\Phi_2$ the function $\Phi_1$ and the terms resulting from taking $G_1=V+O(\ve)$ and $H_1=\alpha\ve(1+O(\ve))$. In a similar way, letting $\Pi_5\circ\Pi_4=(\tilde G_5,\tilde D_5,\tilde H_5 )$ we get
  \eq{
  \tilde D_5=Z\exp\left( -\frac{\A_4(\ve)+\A_5(\ve)+\bar\Phi_5(V,Z,\ve)}{\ve} \right)
  }

  Next, and following similar arguments as above, we know from \cref{prop:compleft} that the composition $\Pi_{321}=\Pi_3\circ\Pi_2\circ\Pi_1$ is of exponential type \emph{with shift}. Finally since the transition $\Pi_{54}=\Pi_5\circ\Pi_4$ is of exponential type with no shift, and using \cref{prop:compleft}, we have that $\Pi_{54}\circ\Pi_{321}$ is an exponential type transition as claimed in the proposition.

  \begin{remark}
    In the case where $\Pi_3$ is an exponential type map, we get a similar result with $\tilde \A=\A_1+\A_2+\A_3+\A_4+\A_5$.
  \end{remark}
\end{proof}

\section{First order differential equations (by R. Roussarie)}
The contents of this section shall appear in greater detail in \cite{DumRouMaelBook}. We reproduce some results here for completeness purposes and to use them in \cref{sec:nfs}.\medskip

Let $X(x)$ be a smooth vector field defined on $W\subset \R^n$, for arbitrary $n\in\N$ (here we include the possible parameters). Let $G(x,y):W\times\R\to\R$ be a smooth function. We shall study the solutions of the first order differential equation
\eq{\label{ro1}
  X\cdot K(x)=G(x,K(x)),
}
where $K(x)$ is the unknown function. We assume the following
\begin{enumerate}
  \item There exists an open section $\Sigma\subset W$ which is transverse to $X$.
  \item Let $\phi(t,x)$ denote the flow of $X$. We can choose an open domain $W_{\Sigma}$ with the property that for any $x\in W_{\Sigma}$, there exists a unique smooth time $t(x)$ (possibly unbounded) such that $\phi(t(x),x)\in\Sigma$. 
  \item The vector field $Z(x,y)=X(x)+G(x,y)\partial_y$ has a complete flow.
\end{enumerate}

The flow of $Z$ takes the form $(\phi(t,x),\psi(t,x,y))$, where $\phi$ is the flow of $X$. It follows that \emph{$K(x)$ is a solution of \cref{ro1} if and only if the graph $\left\{ y=K(x)\right\}$ is a surface tangent to the vector field $Z$}. Then we have the implicit formula
\eq{\label{ro2}
  \psi(t(x),x,K(x))=0.
}

In our applications, the function $G$ is affine in $y$, that is $G(x,y)=L(x)y+\Pi(x)$ where $L$ and $\Pi$ are smooth. If we write $\bar L(t,x)=L(\phi(t,x))$ and $\bar\Pi(t,x)=\Pi(\phi(t,x))$ (where $\phi$ is the flow of $X$), we have for $\psi$ the following linear differential equation
\eq{\label{ro3}
  \frac{d\psi}{dt}(t,x,y)=\bar L(t,x)\psi(t,x,y)+\bar\Pi(t,x).
}

Then we can integrate \cref{ro3} with the initial condition $\psi(0,x,y)=y$ to obtain
\eq{
  \psi(t,x,y)=\exp\left( \int_0^t\bar L(\tau,x)d\tau \right)\left\{ y+\int_0^t\bar\Pi(\tau,x)\left[ \exp\left( -\int_0^\tau\bar L(\sigma,x)d\sigma \right) \right] d\tau \right\}.
}

Since $\exp\left( \int_0^t\bar L(\tau,x)d\tau \right)>0$ we can solve the implicit equation \cref{ro2} obtaining
\eq{\label{ro4}
  K(x)=-\int_0^{t(x)}\Pi(\phi(\tau,x))\left[ \exp\left( -\int_0^\tau L(\phi(\sigma,x))d\sigma \right) \right] d\tau,
}
where we recall that $\phi$ is the flow of $X$ and $t(x)$ is the time to go from $x$ to the section $\Sigma$ along this flow.\medskip

Let us now assume that the vector field $X$ is partially hyperbolically attracting in the following sense: we assume coordinates $x=(a,b)\in\R^p\times\R^q$ and that the vector field $X$ has a decomposition $X(x)=U(x)+V(x)$ where $U$ is the component along $\R^p$ and $V$ is the component along $\R^q$. Moreover, we assume that $V=0$ on $\R^p\times\left\{ 0 \right\}$ (that is $X$ is tangent to $\R^p\times\left\{ 0 \right\}$). We also assume that at each point $x=(a,b)$ it is satisfied that $D_bV(a,0)$ has all its eigenvalues with strictly negative real part. We further suppose that $X$ is given on $W=D\times\Delta$ where $D$ is a domain diffeomorphic to a ball in $\R^p$ and $\Delta$ is a ball in $\R^q$. We choose $\Delta=\Delta_{\rho_0}$ for some $\rho_0>0$ where $\Delta_\rho=\left\{ b\in\R^q\,|\,||b||<\rho \right\}$. It then follows that under a linear change of coordinates $(a,b)\mapsto(a,A(a)b)$, the vector field $X$ enters along $D\times\partial\Delta_\rho$ for $0<\rho\leq\rho_0$ if we choose $\rho_0$ small enough. We now have the following

\begin{proposition}\label{propRob}
  Assume that $D_bV(a,0)$ has all its eigenvalues with a strictly negative real part and that $\rho_0$ is small enough as explained above. Let $B$ be any domain diffeomorphic to a closed ball inside the interior of $D$ and assume that the function $\Pi(x)$ is flat along $D\times\left\{ 0 \right\}$. Then the equation 
  \eq{\label{roprop}
  X \cdot K(x)=L(x)K(x)+\Pi(x)
  }
  has a smooth solution $K(x)$ in $B\times\Delta$ which is flat along $B\times\left\{ 0 \right\}$.
 \end{proposition}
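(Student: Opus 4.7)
The plan is to use the explicit formula \cref{ro4} derived just before the statement, but with $t(x)=+\infty$. The hypothesis that $X$ points strictly inward along $D\times\partial\Delta_{\rho}$ ensures that the forward orbit $\phi(\tau,x)$ of any $x\in B\times\Delta$ stays in $D\times\Delta_{\rho_0}$ for all $\tau\ge 0$, and, thanks to the spectral hypothesis on $D_bV(a,0)$, contracts exponentially toward $D\times\{0\}$. So the candidate solution is
\[
K(x)=-\int_0^{+\infty}\Pi(\phi(\tau,x))\exp\left(-\int_0^{\tau} L(\phi(\sigma,x))\,d\sigma\right)d\tau,
\]
which geometrically picks out the unique $Z$-invariant $y$-graph over $B\times\Delta$ whose trace on $B\times\{0\}$ is the zero section, with $Z=X+(L(x)y+\Pi(x))\partial_y$ as in the preamble to the statement.

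The first step is the quantitative contraction estimate. Since $D_bV(a,0)$ has all eigenvalues with real parts bounded above by some $-\lambda<0$ uniformly on $\bar B$, after the linear change $(a,b)\mapsto(a,A(a)b)$ already used to make $X$ point inward, a Gronwall argument gives $\|\pi_b\phi(\tau,(a,b))\|\le Ce^{-\lambda\tau}\|b\|$ for all $(a,b)\in B\times\Delta$ and $\tau\ge 0$. In parallel, $\exp(-\int_0^{\tau} L(\phi(\sigma,x))\,d\sigma)\le e^{M\tau}$ with $M=\sup_{B\times\Delta}|L|$. Because $\Pi$ is flat along $\{b=0\}$, for every $N\in\N$ there is $C_N>0$ with $|\Pi(a,b)|\le C_N\|b\|^N$; combining the three estimates, the integrand is dominated by $C_N'\,e^{-(N\lambda-M)\tau}\|b\|^N$, and choosing $N$ with $N\lambda>M$ yields absolute convergence of the integral and the pointwise bound $|K(x)|\le C_N''\|b\|^N$. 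Since $N$ is arbitrary, this is exactly the flatness of $K$ along $B\times\{0\}$.

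To upgrade this to $\Cinf$-smoothness with flat derivatives, I would differentiate under the integral sign. Each partial $\partial^\alpha K$ rewrites as a similar integral in which $\Pi$ has been replaced by combinations of derivatives of $\Pi$, $L$, and the Jacobian $\partial_x\phi$ of the flow with respect to the initial condition. Standard Gronwall estimates bound $\partial_x\phi$ by $e^{M'\tau}$ with $M'$ independent of $N$, while derivatives of $\Pi$ remain flat along $\{b=0\}$; the same ``choose $N$ large enough to beat the exponential growth in $\tau$'' argument then yields $\partial^\alpha K(x)=O(\|b\|^N)$ for every $N$. Keeping all of these Gronwall bounds uniform on the compact $\bar B\times\bar\Delta$ while $\tau\to+\infty$, and verifying dominated-convergence hypotheses at each differentiation step, is the main technical obstacle of the proof. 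Once this is settled, the fact that $K$ solves \cref{roprop} is immediate: by construction of $K$ via the limit $t\to+\infty$ of \cref{ro2}, the graph $\{y=K(x)\}$ is tangent to $Z$, which is precisely the equation $X\cdot K=L(x)K+\Pi(x)$.
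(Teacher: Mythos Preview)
Your overall strategy---defining $K$ by the improper integral \cref{ro4} with $t(x)=+\infty$, then using flatness of $\Pi$ to beat every polynomial growth rate coming from $L$ and the variational equations---matches the paper's approach exactly. However, there is one genuine gap.

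You assert that ``the hypothesis that $X$ points strictly inward along $D\times\partial\Delta_{\rho}$ ensures that the forward orbit $\phi(\tau,x)$ of any $x\in B\times\Delta$ stays in $D\times\Delta_{\rho_0}$ for all $\tau\ge 0$.'' This is not justified: the inward-pointing assumption controls only the $b$-boundary $D\times\partial\Delta_\rho$, and nothing in the hypotheses prevents the $a$-component of the flow from exiting through $\partial D\times\Delta$ in finite time. Since $X$ is only given on $W=D\times\Delta$, once the orbit leaves, the integrand $\Pi(\phi(\tau,x))$ is undefined and the formula collapses. Your bound $M=\sup_{B\times\Delta}|L|$ has the same problem: even if the orbit stayed in $D\times\Delta$, it need not stay in $B\times\Delta$.

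The paper repairs this with a single device you are missing: choose a smooth cutoff $f(a)$ equal to $1$ on $B$ and $0$ near $\partial D$, and replace $X=U+V$ by $T=fU+V$. Then $T$ is tangent to $\partial D\times\Delta$ and still points inward along $D\times\partial\Delta_\rho$, so $T$-orbits are trapped in $D\times\Delta$ for all forward time and the contraction estimate $\|\phi_b(t,x)\|\le\|b\|e^{-Et}$ holds globally. One then runs your argument verbatim for the equation $T\cdot K=LK+\Pi$; since $T=X$ on $B\times\Delta$, the resulting $K$ solves the original equation there. With this modification your proof is complete and coincides with the paper's.
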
 

\begin{proof} Let $f(a):\R^p\to[0,1]$ be a smooth function which is equal to $1$ on $B$ and equal to $0$ on a neighborhood of $\partial D$. Define the vector field
\eq{
T=V+fU.
}
This vector field $T$ coincides with $X$ on $B\times\Delta$. Moreover, $T$ is tangent along $\partial D\times\Delta$ and enters the domain $D\times\Delta$ along $D\times\partial\Delta$. Let $\phi(t,x)=(\phi_a(t,x),\phi_b(t,x))\in\R^p\times\R^q$ denote the flow of $T$. It follows that $\phi(t,x)\in D\times\Delta$ for all $x\in D\times\Delta$ and all $t\geq 0$. From the assumption on $V$ we have that there exists a positive constant $E>0$ such that
\eq{\label{ro6}
||\phi_b(t,x)||\leq ||b||\exp(-Et),
}
for any $x=(a,b)\in D\times\Delta$ and $t\in [0,+\infty)$. We now want to use this flow $\phi$ in \cref{ro4} noting that if the integral converges, then $K(x)$ is a solution to the equation $T\cdot K=LK+\Pi$ on $D\times\Delta$ and then to the equation $X\cdot K=Lk+\Pi$ on $B\times\Delta$. In this setting \cref{ro4} is written as
\eq{\label{ro5}
K(x)=-\int_0^{\infty}\Pi(\phi(\tau,x))\left[ \exp\left( -\int_0^\tau L(\phi(\sigma,x))d\sigma \right) \right] d\tau.
}

Now, we need to prove that \cref{ro5} defines a smooth function on $D\times\Delta$ which is flat along $D\times\left\{ 0 \right\}$. In other words, we shall prove that $K$ and all its partial derivatives are equal to $0$ on $D\times\left\{ 0 \right\}$. As $L$ is bounded, there exists a constant $M_0>0$ such that
 \eq{
\exp\left( -\int_0^\tau L(\phi(\sigma,x))d\sigma \right) \leq\exp(M_0\tau).
 }  
 Next, let $N\in\N$. Since $\Pi$ is flat in $v$, there exists a constant $P_N>0$ such that
 \eq{
 |\Pi(a,b)|\leq P_N||b||^N,
 }
 and then from \cref{ro6} it follows that
 \eq{
 |\Pi(\phi(\tau,x))|\leq P_N||b||^N\exp(-NE\tau).
 }

 Using these estimates we have that

 \eq{\label{ro7}
 |K(x)|\leq P_N||b||^N\int_0^{+\infty}\exp((M_0-NE)\tau)d\tau.
 }

The integral in \cref{ro7} converges if $N$ is large enough, strictly speaking if $N>\tfrac{M_0}{E}$. This proves that by choosing $N$ sufficiently large, the right hand side of \cref{ro5} defines a function which is continuous and equal to $0$ on $D\times \left\{ 0 \right\}$.

Let us now consider any partial derivation $\partial_\alpha K$ of $K$. Let us write
\eq{
  H(\tau,x)=\Pi(\phi(\tau,x))\exp\left[ -\int_0^\tau L(\phi(\sigma,x) )d\sigma \right],
}

the integrand in \cref{ro5}. Using chain rule on the derivative of \cref{ro5}, we have to prove that the integral 
\eq{\label{ro8}
  \int_0^{+\infty} \partial_{\alpha} H(\tau,x)d\tau
}
is convergent and that there is an estimate similar to \cref{ro7} for $N$ large enough. We do not want to give all the details here and refer the reader to \cite{DumRouMaelBook}. The idea is that $\partial_{\alpha} H(\tau,x)$ is a finite sum of terms such that each of these terms is a product of factors which are partial derivatives in $x$ and are of of one of the following forms
\begin{enumerate}[leftmargin=*]
  \item $\partial_{\alpha_1}(\phi(\tau,x))$. Since $\Pi$ is smooth and flat along  $D\times \left\{ 0 \right\}$, this is also the case for $\partial_{\alpha_1}(\phi(\tau,x))$. Therefore, for $N$ sufficiently large, we can write an estimate of the form
  \eq{
  | \partial_{\alpha_1}(\phi(\tau,x))|\leq P_{N_{\alpha_1}}||b||^N\exp(-NE\tau),
  }
  for constants $P_{N_{\alpha_1}}>0$.
  \item $\partial_{\alpha_2}\phi(\tau,x)$ (resp. $\partial_{\alpha_2}\phi(\sigma,x)$, note that $0\leq \sigma\leq\tau$). By the usual variational method along trajectories, there exists constants $E_{\alpha_2}>0$ such that $|\partial_{\alpha_2}\phi(\tau,x)|\leq \exp(E_{\alpha_2}\tau)$ (resp. $|\partial_{\alpha_2}\phi(\sigma,x)|\leq \exp(E_{\alpha_2}\sigma)$ ).
  \item $\partial_{\alpha_3}L(\phi(\tau,x))$. As $L$ is smooth in $D\times\Delta$, all these factors are bounded by a constant $M_{\alpha_3}$.
  \item $\exp\left( -\int_0^\tau L(\phi(\sigma,x))d\sigma\right)$. This factor is bounded by $\exp(M_0\tau)$. 
\end{enumerate}

Next, by remarking that a factor of the first type appears in each term of the expansion of $\partial_\alpha H$, and taking $N$ large enough, it is possible to conclude that the integral \cref{ro8} converges an is equal to $0$ for $x\in D\times \left\{ 0 \right\} $. Therefore, the partial derivative $\partial_\alpha K(x)$ exists, is continuous and is equal to $0$ on $D\times \left\{ 0 \right\}$.

\end{proof}

\section{Normal form and transition of a semi-hyperbolic vector field }\label{app:NFandT}

In this section, we present a rather general framework for the computation of a $\Cl$ normal form and the corresponding transition of a vector fields with a semi-hyperbolic singularity. The contents of this section are not only relevant for the object studied in this document, but for more general systems as well, c.f. \cite{JardonThesis}. To make our computations simpler, we prove a lemma that allows us to ``partition'' a smooth function. As a simple example of this partition, let $f(u,v)$ be a smooth function on $\R^2$. We show that $f$ can be written as $f(u,v)=f_1(uv,u)+f_2(uv,v)$, where $f_1$ and $f_2$ are smooth. This type of result becomes useful when computing the transition map that we present in \cref{sec:transitions}. 

\subsection{Normal form}\label{sec:nfs}
Here we provide a $\Cl$ normal form of a semi-hyperbolic vector field which frequently appears in the analysis of slow-fast systems. The goal of obtaining such a normal form is that the computation of the corresponding transition becomes simpler.

\begin{proposition}\label{prop:nf_semihyp}
Let $\alpha$, $\beta=(\beta_1,\ldots,\beta_m)$ and $\gamma$ be non-zero constants, and consider the vector field $X$ given by

\eq{X:\begin{cases}
u' &= \alpha w u (1+f)+ w g\\
v_j' &= \beta_j w v_j (1+f) \\
w' &= \gamma w^2(1+f)\\
z' &= -\Lambda+h,
\end{cases}
}

where $j=1,2,\ldots,m$; where the functions $f=f(u,v,w,z)$, $g=g(u,v,w,z)$ and $h=h(u,v,w,z)$ are smooth functions which are flat at the origin of $\R^{m+3}$, and where $\Lambda=\Lambda(u,v,w,z)$ is a smooth function such that $\Lambda(0)=0$ and $\tparcs{\Lambda}{z}(0)>0$. Then there exist a $\Cl$ coordinates $(U,V_1,\ldots,V_m,W,Z)$ under which $X$ can be written as

\eq{
X_{\text{sh}}^N:\begin{cases}
U' &= \alpha W U \\
V_j' &= \beta_j W V_j \\
W' &= \gamma W^2\\
Z' &=-G Z,
\end{cases}
}

where $G=G(U,V,W)$ is a $C^\ell$ function such that $G(0)>0$.
\end{proposition}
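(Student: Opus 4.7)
My plan is to realize the normalizing coordinate change as a composition of four successive $C^\ell$ transformations, each killing one specific obstruction. To start, I would straighten the center manifold: at the origin the linearization of $X$ has an $(m+2)$-fold zero eigenvalue (since every center equation carries the factor $w$, which vanishes there, and $g$ is flat) and one simple negative eigenvalue $-\partial_z\Lambda(0)<0$ along $z$. The $C^\ell$ center manifold theorem produces an invariant manifold $W^c=\{z=\phi(u,v,w)\}$ tangent to the $(u,v,w)$-hyperplane, and the substitution $\zeta=z-\phi(u,v,w)$ renders $\{\zeta=0\}$ invariant. The fourth equation then factorizes as $\zeta'=-G_1(u,v,w,\zeta)\zeta$ with $G_1(0)>0$, while the center equations retain their original shape with (possibly modified) $C^\ell$-flat perturbations.

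Next I would apply the $C^\ell$ Takens-Bonckaert normalization for partially hyperbolic equilibria \cite{Bonckaert1,Bonckaert2,Takens_partially} to decouple $\zeta$ from the center coordinates modulo flat remainders; after this the center equations are independent of $\zeta$ and the hyperbolic equation becomes $\zeta'=-\bar G(u,v,w)\zeta+\zeta^{2} R(u,v,w,\zeta)$. The pivotal step is then to eliminate the flat perturbation of the center dynamics. Writing
\[
X_0^c=\alpha w u\,\partial_u+\sum_{j=1}^m \beta_j\, w\, v_j\,\partial_{v_j}+\gamma w^{2}\,\partial_w,
\]
the center vector field reads $X_0^c+R^c$ with $R^c$ $C^\ell$-flat at the origin. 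Seeking a conjugating diffeomorphism of the form $\mathrm{id}+\Phi_1$ with $\Phi_1$ flat, the conjugacy condition reduces componentwise to cohomological equations of the type $X_0^c\cdot K=L\,K+\Pi$ with $\Pi$ $C^\ell$-flat. Proposition~\ref{propRob} then supplies a $C^\ell$-flat solution $K$, providing the required correction.

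To finish, I would remove the residual $\zeta$-dependence of $\bar G$ by looking for $Z=\zeta\,\bigl(1+\zeta\,\psi(U,V,W,\zeta)\bigr)$ such that $Z'=-\bar G(U,V,W)\,Z$. Expanding, this requirement translates to a PDE of the form $X_0^c\cdot\psi=\bar G\,\psi+R(U,V,W,0)+O(\zeta)$, which I would solve by direct integration along orbits of $X_0^c$ off $\{w=0\}$ (where $X_0^c/w$ is genuinely hyperbolic) and extend across $\{w=0\}$ using flatness. Setting $G:=\bar G$ then yields the target normal form $X_{\text{sh}}^N$.

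The delicate step is the third one, the elimination of the flat perturbation on the center manifold. Because $X_0^c$ vanishes identically on the entire hyperplane $\{w=0\}$, classical linearization theorems (Sternberg, Chen, and the standard partially hyperbolic normal form machinery) do not apply: there is no hyperbolic splitting at the origin. The remedy is Roussarie's Proposition~\ref{propRob}, designed precisely to solve cohomological equations along a manifold of equilibria provided the forcing is $C^\infty$-flat there. The $C^\infty$-flatness of $f$, $g$ and $h$ at the origin---guaranteed in the $A_3$-SFS application by the preceding Borel realization of the formal normal form---is therefore indispensable.
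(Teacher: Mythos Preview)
Your overall architecture---straighten the center manifold, apply Takens--Bonckaert to decouple the hyperbolic direction, then kill the flat perturbation of the center dynamics---matches the paper's, and your step~4 is harmless though unnecessary (with a single hyperbolic eigenvalue, Takens--Bonckaert already yields $Z'=-\bar G(u,v,w)Z$ with no higher $Z$-terms).

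The genuine gap is in your step~3. You propose to solve the cohomological equation $X_0^c\cdot K=LK+\Pi$ by invoking Proposition~\ref{propRob}, but that proposition does not apply to $X_0^c$. Its hypotheses require a splitting $x=(a,b)$ with $D_bV(a,0)$ having all eigenvalues of strictly negative real part, i.e.\ a genuine normally attracting fiber direction, and it requires $\Pi$ to be flat along the whole center $D\times\{0\}$. Your $X_0^c=\alpha w u\,\partial_u+\sum_j\beta_j w v_j\,\partial_{v_j}+\gamma w^2\,\partial_w$ vanishes on the entire hyperplane $\{w=0\}$ and has a nilpotent linearization there; there is no attracting fiber to play the role of $b$. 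Moreover the perturbations $f,g,h$ are only flat at the origin, not along $\{w=0\}$. So neither hypothesis of Proposition~\ref{propRob} is met.

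The paper's remedy is the step you noticed but placed too late: divide by $w$ \emph{before} solving the cohomological equation. The rescaled field $\mathcal Y=\tfrac{1}{w}X_0^c=\alpha u\,\partial_u+\sum_j\beta_j v_j\,\partial_{v_j}+\gamma w\,\partial_w$ is genuinely hyperbolic at the origin, and a conjugacy for $\mathcal Y$ that fixes the $w$-coordinate automatically conjugates $w\mathcal Y$. One then runs the path method on $\mathcal Y$ (Proposition~\ref{propRob1}), and the resulting cohomological equation is handled by splitting the flat germ as a sum of a piece flat along the stable directions and a piece flat along the unstable directions (via blow-up, cf.\ Lemma~\ref{prop:partition}), applying Proposition~\ref{propRob} separately to $\mathcal Y^s$ and $-\mathcal Y^s$. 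This splitting is what rescues the ``flat only at the origin'' assumption. Your proposal is missing both the division by $w$ and this decomposition of the flat forcing.
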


\begin{proofof}{\cref{prop:nf_semihyp}}

From the definition of the vector field $X$ we note that the origin is a semi-hyperbolic singular point. The hyperbolic eigenspace is $1$-dimensional while the center eigenspace is $(m+2)$-dimensional. We now proceed in 4 steps as follows.

\begin{enumerate}[leftmargin=*]
\item Define a new vector field $Y$ by $Y=\frac{1}{1+f}X$, which reads as
\eq{
Y:\begin{cases}
u' &= \alpha w u + w \bar g\\
v_j' &= \beta_j w v_j  \\
w' &= \gamma w^2,\\
z' &= -\Lambda+\bar h,
\end{cases}
}

where the functions $\bar g$ and $\bar h$ are flat at the origin of $\R^{m+3}$. Note that in a small neighborhood of $(u,v,w,z)=(0,0,0,0)$ the vector fields $X$ and $Y$ are smoothly equivalent.

\item By looking at $DY(0)$, there exists an $(m+2)$-dimensional center manifold $\Wc$ \cite{Guckenheimer}. Let $M_0$ be the set of critical points of $Y$, that is

\eq{
M_0=\left\{ (u,v,w,z)\, | \, \Lambda(u,v,0,z)=0  \right\}.
}

By definition, the manifold $M_0$ is invariant and normally hyperbolic. Now, assume $|w|\ll 1$. This condition appears naturally in our applications. By Fenichel's theory \cite{Fenichel} the manifold $M_0$ persists as an invariant normally hyperbolic manifold $M_w$, for sufficiently small $w\neq 0$. We identify $M_w$ with $\Wc$. In other words, there exists a $C^\ell$ function $m=m(u,v,w)$ such that the center manifold $\Wc$ is given as a graph

\eq{
\Wc=\Graph(u,v,w,m).
}

Define $\zeta=z-m$, then $\zeta'=z'-m'$. But we know, due to invariance of $\Wc$ under the flow of $Y$, that $\zeta'|_{\zeta=0}=0$. This is, there exists a $C^\ell$ function $H=H(u,v,w,\zeta)$ such that $\zeta'=-H\zeta$. With $H(0)=0$ and $\parcs{H}{\zeta}(0)>0$. 

In conclusion of this step, there exists a $C^\ell$ transformation $\psi:(u,v,w,z)\mapsto (u,v,w,\zeta)$ that transforms the vector field $Y$ into
\eq{\tilde Y:\begin{cases}
u' &= \alpha w u + w \bar g\\
v_j' &= \beta_j w v_j  \\
w' &= \gamma w^2,\\
\zeta' &= -H\zeta,
\end{cases}
}

where $H=H(u,v,w,\zeta)$ is a $C^\ell$ function such that $H(0)=0$ and where $\parcs{H}{z}(0)=\parcs{\Lambda}{z}(0)>0$.

\item Observe that thanks to the previous step, the center manifold $\Wc$ has the simple expression $\Wc=\left\{ \zeta=0 \right\}$. We now want to separate the variables on the center manifold (these are $(u,v,w)$) from those on the hyperbolic subspace ($z$). Additionally, we want to keep the simple format that $\tilde Y$ has in the center direction. This amounts to find a change of coordinates along $\zeta$ only. For this we use an extension of Takens's theorem on semi-hyperbolic vector fields \cite{Takens_partially} due to Bonckaert \cite{Bonckaert1,Bonckaert2}. With this, it is possible to show there exists a $C^\ell$ transformation, fixing the center coordinates, that conjugates $\tilde Y$ to the vector field

\eq{\bar Y:\begin{cases}\label{bY}
u' &= \alpha wu + w \tilde g\\
v_j' &= \beta_j w v_j  \\
w' &= \gamma w^2,\\
Z' &= -\bar H Z,
\end{cases}
}

where now the flat perturbation $\tilde g$ is independent of $Z$ and $\bar H=\bar H(u,v,w)$ is a $\Cl$ function with $\bar H(0,0,0)>0$.

\item In this last step we eliminate the flat perturbation from $\bar Y$, which appears only along $u$. Due to the previous step, the dynamics on the center manifold are independent of $Z$. The restriction of $\bar Y$ to $\Wc$ reads as

\eq{\bar Y|_{\Wc}:\begin{cases}
u' &= \alpha wu +w\tilde g \\
v_j' &= \beta_j wv_j  \\
w' &= \gamma w^2.\\
\end{cases}
}

Note that for $w\neq 0$, the vector field $\frac{1}{w}\bar Y|_{\Wc}$ is hyperbolic. Let $\mathcal Y=\frac{1}{w}\bar Y|_{\Wc}$, that is

\eq{
  \mathcal Y:\begin{cases}
    u' &= \alpha u +\tilde g\\
    v_j' &= \beta_j v_j \\
    w' &= \gamma w.\\
  \end{cases}
}

Now we have the a result that shows that there exists a change of coordinates, respecting the variables $(v,w)$ that kills the term $\tilde g$. Keeping the coordinate $w$ fixed is important because we want to prove an equivalence relation with $w\mathcal Y$ and not with $\mathcal Y$. The following proposition shall appear in a general context in \cite{DumRouMaelBook}.

\begin{proposition}[{\sc{\cite{DumRouMaelBook}}}]\label{propRob1}
  There exists a diffeomorphism $(u,v,w)\mapsto(u+H(u,v,w),v,w)$ with $H$ flat at $(u,v,w)=0$ which brings $\mathcal Y$ to 
\eq{
  \bar{\mathcal Y}:\begin{cases}
    u' &= \alpha u \\
    v_j' &= \beta_j v_j \\
    w' &= \gamma w.\\
  \end{cases}
}

\end{proposition}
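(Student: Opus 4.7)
My plan is to reduce the conjugacy problem to a linear first-order PDE for $H$ and to solve it by variation of constants along the flow of $\mathcal Y$, in the spirit of \cref{propRob}. Imposing $\Phi_{*}\mathcal Y=\bar{\mathcal Y}$ for $\Phi(u,v,w)=(u+H,v,w)$ and computing the push-forward in coordinates yields, after cancellation of the linear terms, the homological equation
\begin{equation*}
\mathcal Y\cdot H-\alpha H=-\tilde g,
\end{equation*}
which is of the type $X\cdot K=LK+\Pi$ addressed in \cref{propRob}, here with $L\equiv\alpha$ constant and $\Pi=-\tilde g$ flat at the origin. Multiplying by the integrating factor $e^{-\alpha\tau}$ along a trajectory $\phi_{\tau}$ of $\mathcal Y$ produces the exact derivative $\tfrac{d}{d\tau}\bigl(H(\phi_{\tau}(x))e^{-\alpha\tau}\bigr)=-\tilde g(\phi_{\tau}(x))e^{-\alpha\tau}$, so the natural candidate for $H$ is obtained by integrating the right-hand side from $0$ to an appropriately signed infinity and dropping the boundary term.

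Since the eigenvalues $\alpha,\beta_{1},\ldots,\beta_{m},\gamma$ are all nonzero, the origin is a hyperbolic fixed point of $\mathcal Y$; I group the coordinates into $(a,b)$ with $b$ collecting those whose eigenvalue has sign opposite to $\alpha$, so that after a possible time-reversal the $b$-dynamics contract. A bump cut-off on the $a$-component exactly as in the proof of \cref{propRob} produces a modified vector field $T$ that coincides with $\mathcal Y$ on a smaller ball $B\times\Delta$, whose positive orbits remain in a fixed compact set and satisfy $\|\phi_{b}(\tau,x)\|\leq\|b\|e^{-E\tau}$. Setting
\begin{equation*}
H(x)=-\int_{0}^{+\infty}\tilde g(\phi_{\tau}(x))e^{-\alpha\tau}\,d\tau,
\end{equation*}
I expect to obtain, for every $N$, an estimate of the form $|\tilde g(\phi_{\tau}(x))|\leq C_{N}\|b\|^{N}e^{-NE\tau}$ (and analogous bounds for the variational derivatives of $\phi_{\tau}$), which for $N$ large enough makes both the integral and each of its partial derivatives absolutely convergent and flat along $B\times\{b=0\}$, reproducing verbatim the estimates of \cref{propRob}.

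The main obstacle is that \cref{propRob} requires $\Pi$ to be flat along the whole manifold $D\times\{0\}$, whereas here $\tilde g$ is only flat at the single point origin, so the bound $|\tilde g(\phi_{\tau}(x))|\leq C_{N}\|b\|^{N}e^{-NE\tau}$ is not directly available from flatness alone. The idea is to decouple locally the stable and unstable components of the flow using the hyperbolic splitting at the origin, integrating the stable contribution forward in time (where $\phi$ contracts toward the unstable manifold) and the unstable contribution backward in time (where it contracts toward the stable manifold); flatness of $\tilde g$ at the origin, combined with the exponential contraction toward the respective invariant manifolds, then yields the propRob-type decay along every trajectory. Once these estimates are in place, $\Phi=(u+H,v,w)$ is a local diffeomorphism near the origin since $1+H_{u}=1+O(\text{flat})>0$ there, and by construction it conjugates $\mathcal Y$ to $\bar{\mathcal Y}$; the detailed bookkeeping is the content of the forthcoming \cite{DumRouMaelBook}.
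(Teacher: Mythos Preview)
Your direct conjugacy approach is correct and is a genuine alternative to the paper's argument. The paper uses the \emph{path method}: it joins $\bar{\mathcal Y}$ to $\mathcal Y$ by the family $\mathcal Y^{s}=\bar{\mathcal Y}+s\tilde g\,\partial_{u}$, differentiates the conjugacy relation in $s$, and reduces to the $s$-dependent homological equation
\[
\mathcal Y^{s}(P_{s})-\Bigl(\alpha+s\,\tparcs{\tilde g}{u}\Bigr)\,P_{s}=\tilde g
\]
for the infinitesimal generator $\zeta^{s}=P_{s}\,\partial_{u}$ of the conjugating family. Your equation $\mathcal Y\cdot H-\alpha H=-\tilde g$ is essentially the same homological equation (with the harmless flat term $\partial_{u}\tilde g$ absorbed into $L$), obtained without the homotopy machinery; the direct approach is legitimately simpler here because the equation is already linear in $H$.

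Both routes, however, hit exactly the obstacle you name, and the paper's resolution is more concrete than your sketch. Rather than splitting the \emph{integration}, the paper splits the \emph{source term}: using a blow-up of the origin it proves the decomposition $\mathcal M^{\infty}(a,b)=\mathcal M^{\infty}(a)+\mathcal M^{\infty}(b)$ of germs flat at $0$ into germs flat along $\{a=0\}$ plus germs flat along $\{b=0\}$ (this is the mechanism behind \cref{prop:partition}). One then writes $\tilde g=\tilde g_{a}+\tilde g_{b}$ and solves the homological equation twice---once with $(\mathcal Y,\tilde g_{b})$ via \cref{propRob}, once with $(-\mathcal Y,\tilde g_{a})$ via \cref{propRob} applied to the time-reversed flow---and sums the solutions by linearity. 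Your forward/backward idea is morally this, but the phrase ``stable and unstable contributions'' of a single flat function $\tilde g$ has no meaning until this decomposition is performed; the blow-up (or an equivalent partition of unity on $S^{m+1}$) is the step you are missing, and without it the estimate $|\tilde g(\phi_{\tau}(x))|\leq C_{N}\|b\|^{N}e^{-NE\tau}$ is simply false.
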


\begin{proof} We shall use the path method to show that $\bar {\mathcal Y}$ is conjugate to $\mathcal Y$. Let $s$ be a parameter and let us define the $s$-parameter family of vector fields
\eq{\label{aauu1}
\mathcal Y^s=\mathcal Y+s\tilde g\parc{u}
}

We call $\mathcal{Y}^s$ the path between $\mathcal Y$ and $\mathcal Y+\tilde g\tparc{u}$. We now look for an $s$-parameter family of diffeomorphisms $\mathcal H^s$ with $\mathcal H^0=\Id$ such that for each $s$ we have the conjugacy 
\eq{
  \mathcal H^s_*\mathcal Y=\mathcal Y^s.
}

In such a case, the vector fields $\mathcal Y$ and $\mathcal Y+\tilde g\tparc{u}$ are conjugated by $\mathcal H^1$. By derivation of the family $\H^s$ along $s$, we obtain an $s$-parameter family of vector field $\zeta^s$ satisfying
\eq{\label{aauu3}
  \zeta^s(\H^s)=\parcs{H^s}{s}.
}

This implies that by derivation of  \cref{aauu1} with respect to $s$ we obtain

\eq{\label{aauu2}
[\mathcal Y^s,\zeta^s]=\parcs{\mathcal Y^s}{s}=\tilde g\parc{u}.
}

Therefore, if are able to find a solution $\zeta^s$ of \cref{aauu2}, the conjugacy $\H^s$ is obtained by integration of \cref{aauu3}. In our particular case, we are looking for a solution along the $u$-direction, that is of the form $\zeta^s=P_s\parc{u}$. It follows that

\eq{
[\mathcal Y^s,\zeta^s] &= \left[ (\alpha u +s\tilde g)+\beta v\parc v+\gamma w\parc w, P_s\parc{u} \right]\\
&=\left(\mathcal Y^s(P_s)-\left(\alpha+s\parcs{\tilde g}{u}\right)P_s\right)\parc{u}.
}

Therefore we have reduced our conjugacy problem to solving the differential equation

\eq{\label{aauu4}
\mathcal Y^s(P_s)-\left(\alpha+s\parcs{\tilde g}{u}\right)P_s=\tilde g,
}

where we recall that $\tilde g=\tilde g(u,v,w)$ is flat at $(u,v,w)=(0,0,0)$. We now want to use \cref{propRob} to show that \cref{aauu4} has a solution $P_s=P_s(u,v,w)$ which is flat at $(u,v,w)=(0,0,0)$. For this, let $G_s=\alpha+s\tparcs{\tilde g}{u}$. Now, we only need a small adaptation: in the setting and notation of \cref{propRob}  we may assume (under the suitable arrangement of coordinates) that $\mathcal Y^s$ (or $X$ in \cref{propRob}) is tangent to $\R^d\times\left\{ 0 \right\}$ and $\left\{ 0 \right\}\times \R^{n-d}$. Let $\M_s^\infty(a)$ and $\M_s^\infty(b)$ denote the space of germs of $s$-families of smooth functions that are flat at $\left\{ a=0 \right\}$ and at $\left\{ b=0 \right\}$ respectively. Using a blowing-up at $0\in\R^n$ it can be shown that $\M_s^\infty(a,b)=\M_s^\infty(a)+\M_s^\infty(b)$ (see the arguments in \cref{prop:partition}). From this formula, it follows that it is sufficient to solve \cref{aauu4} in the spaces $\M_s^\infty(a)$ and $\M_s^\infty(b)$ respectively. Naturally, these two cases are equivalent up to the change of $\mathcal Y^s$ by $-\mathcal Y^s$ and $G_s$ by $-G_s$ in \cref{roprop}. In either case, the vector field $\mathcal Y^s$ (or  $-\mathcal Y^s$) of \cref{aauu4} satisfies the hypothesis of \cref{propRob}. Then for $\tilde g$ in $\M_s^\infty(a)$ (resp. in $\M_s^\infty(b)$) and  applying \cref{propRob}, we can solve \cref{aauu4} with $P_s$ in $\M_s^\infty(a)$ (resp. in $\M_s^\infty(b)$). 
  
\end{proof}

Thus, from \cref{propRob1}, we have that $\mathcal{Y}\sim\bar{\mathcal{Y}}$ respecting $w$, which implies $w\mathcal{Y}\sim w\bar{\mathcal{Y}}$. Therefore, we conclude that \cref{bY} can be written as stated in the proposition.

\end{enumerate}

\end{proofof}

\subsection{Partition of a smooth function}\label{sec:partition}
In this section we investigate the problem of partitioning a smooth function. The result presented below is important since it is used to simplify the computation of transition maps. To be more specific, let us give a brief example. Consider the three dimensional differential equation
\eq{
x' &= x\\
y' &= -y\\
z' &= g(x,y)z,\\
}
where $g$ is a smooth function. We want to take advantage from the fact that $xy$ is a first integral. We show below that the function $g$ can be partitioned as $g(x,y)=g_1(xy,x)+g_2(xy,y)$. This makes the integration of $z'$ simpler.

\begin{lemma}\label{prop:partition} Let $u\in \R$ and $v\in\R^m$. Let $f=f(u,v)$ be a smooth function such that $f(0,0)=0$. Then there exist smooth functions $f_0=f_0(uv,u)$ and $f_1(uv,v)$ such that the function $f$ can be written as

\eq{
f=f_0+f_1,
}

where $f_0(0,0)=0$ and $f_1(0,0)=0$.

\end{lemma}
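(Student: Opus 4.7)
The plan is to split $f$ into a formal piece, handled by iterating Hadamard's lemma and then invoking Borel's lemma, and a flat remainder, handled by a partition of unity adapted to the resolution of the origin.

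First, I set $w_i := uv_i$. Applying Hadamard's lemma to $f(u,v) - f(u,0)$, which vanishes at $v=0$, gives $f(u,v) - f(u,0) = \sum_i v_i B_i(u,v)$; a second application to each $B_i(u,v) - B_i(0,v)$ yields $B_i = B_i(0,v) + u\, C_i(u,v)$. Combining these and using $f(0,0)=0$ to identify $\sum_i v_i B_i(0,v) = f(0,v)$ I obtain
\[
f(u,v) = f(u,0) + f(0,v) + \sum_{i=1}^{m} w_i\, C_i(u,v).
\]
Iterating the same decomposition on each $C_i$ produces, for every order $N$, an expansion whose coefficients either depend only on $u$, only on $v$, or carry extra factors from $w$ that get recycled. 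Collecting the pure $u$-parts into $\hat f_0(w,u) = f(u,0) + \sum_{|\alpha|\ge 1} w^{\alpha} a_{\alpha}(u)$ and the pure $v$-parts into $\hat f_1(w,v) = f(0,v) + \sum_{|\alpha|\ge 1} w^{\alpha} b_{\alpha}(v)$ gives formal power series in $w$ with $\Cinf$ coefficients. By Borel's lemma these series are realized as $\Cinf$ functions with $\hat f_0(0,0) = \hat f_1(0,0) = 0$, and the remainder $R(u,v) := f(u,v) - \hat f_0(uv,u) - \hat f_1(uv,v)$ is smooth and flat at the origin.

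Second, I split the flat remainder via a smooth partition of unity $\phi_u + \phi_{v_1} + \cdots + \phi_{v_m} \equiv 1$ on $\R^{m+1}$, where $\phi_u$ is supported in the cone $\{|u| \ge c\|v\|\}$ and each $\phi_{v_i}$ in $\{|v_i| \ge c\,\max(|u|,|v_j|)\}$ for a small $c>0$. On the support of $\phi_u$ the substitution $v = w/u$ is well-defined, and I set $R_0(w,u) := (\phi_u R)(u,w/u)$ for $u\neq 0$, extended by zero at $u=0$. On each $\phi_{v_i}$-region the analogous substitution $u = w_i/v_i$ produces a piece that is assembled into $R_1(w,v)$, extended by zero on $\{v_i=0\}$. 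By construction $R_0(uv,u) + R_1(uv,v) = (\phi_u + \sum_i \phi_{v_i})R = R$, so the desired decomposition is $f_0 := \hat f_0 + R_0$, $f_1 := \hat f_1 + R_1$, with $f_0(0,0) = f_1(0,0) = 0$ because $R$ is flat at the origin.

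The main obstacle is to verify that $R_0$, and similarly $R_1$, extends smoothly across $u=0$ with all derivatives vanishing there. On the support of $\phi_u$ one has $\|v\| \le c^{-1}|u|$, so after inverting to the $(w,u)$-coordinates $|u|^2 \ge c|w|$; combining this with the flatness bound $|\partial^{\alpha} R(u,v)| \le C_{N,\alpha}(u^2 + \|v\|^2)^{N/2}$ available for every $N$ and every multi-index $\alpha$, and controlling the negative powers of $u$ produced by applying the chain rule to $R(u,w/u)$, one concludes that every partial derivative of $R_0$ tends to $0$ as $(w,u)\to(0,0)$. This is the concrete realization of the abstract identity $\mathcal{M}_s^{\infty}(a,b) = \mathcal{M}_s^{\infty}(a) + \mathcal{M}_s^{\infty}(b)$ invoked in the proof of \cref{propRob1}, and is where the role of the blow-up enters.
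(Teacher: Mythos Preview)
Your proposal is correct and follows essentially the same two-step strategy as the paper: a formal decomposition realized via Borel's lemma, followed by a partition-of-unity argument on a blow-up of the origin to handle the flat remainder. The only cosmetic difference is that the paper obtains the formal splitting by a direct combinatorial partition of the monomials $u^pv^q$ according to whether $p\ge|q|$ or some $q_i$ dominates, whereas you produce the same formal series by iterating Hadamard's lemma; the treatment of the flat remainder via conical regions is exactly the paper's directional blow-up in different clothing.
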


\begin{proofof}{\cref{prop:partition}} We proceed in two steps. The first consists in proving the formal version of the statement. The second step is to extend the formal result to the smooth case.\\

\subsubsection*{Formal step} 
\label{ssub:formal_step}
Let $\hat f$ denote the formal expansion of the smooth function $f$. Let $p\in\N$ and $q\in\N^m$. We use the following notation:

\begin{itemize}
  \item By $q\geq 0$ we mean $q_i\geq 0$ for all $i\in[1,m]$.
  \item For a vector $v\in\R^m$ we write $v^q=v_1^{q_1}\cdots v_m^{q_m}$.
  \item The $L_1$ norm of $q$ is denote by $|q|$, and thus for $q>0$ we have $|q|=\sum_{j=1}^mq_j$.
  \item We denote by $\tilde q_i$ the vector 
\eq{\tilde q_i=(q_1,\ldots,q_{i-1},q_{i+1},\ldots,q_m)}

and therefore we have that $v^{\tilde q_i}$ reads as 

\eq{v^{\tilde q_i}=\frac{v^q}{v_i^{q_i}}=v_1^{q_1}\cdots v_{i-1}^{q_{i-1}}v_{q_{i+1}}^{q_{i+1}}\cdots v_m^{q_m}.}

Besides, we have that the $L_1$ norm of $\tilde q_i$ is given by $|\tilde q_i|=|q|-q_i=\sum_{j=1,j\neq i}^mq_j$. \\
\end{itemize}

The formal series expansion of $f$ reads as

\eq{
\hat f= \sum_{p\geq 0, q\geq 0} a_{pq}u^pv^q,
}

where $a_{00}=0$. With the notation introduced above, we can partition $\hat f$ as follows

\eq{
\hat f = \sum_{p\geq |q|} a'_{pq}(uv)^qu^{p-|q|} + \sum_{i=1}^{m} \sum_{q_i\geq p+|\tilde q_i| }a'_{pq} (uv_i)^p (v_iv)^{\tilde q_i}v_i^{q_i-p-|\tilde q_i|},
}

where 

\eq{
(uv)^q &=(uv_1)^{q_1}\cdots (uv_m)^{q_m}\\
(v_iv)^{\tilde q_i}&=\frac{(v_iv)^{q}}{v_i^{2q_i}},
}

and where $a'_{pq}\in\R$ are suitable chosen coefficients. Let $r\in\N^m,\, s\in\N$. Define the following formal polynomials

\eq{
\hat h(uv,u) &= \sum_{r,s\geq0} \alpha_{rs}(uv)^ru^s = \sum_{p\geq |q|} a'_{pq}(uv)^qu^{p-|q|},}

where $\alpha_{rs}\in\R$, and

\eq{
\hat g_i(uv_i,v) &=\sum_{r,s,t\geq 0} \beta_{irs}(uv_i)^s v^r \\
&= \sum_{q_i\geq p+|\tilde q_i| }a'_{pq} (v_iv)^{\tilde q_i}(uv_i)^p v_i^{q_i-p-|\tilde q_i|},
} 

where $\beta_{irs}\in\R$. The coefficients $\alpha_{rs}$ and $\beta_{irs}$ are conveniently chosen to make the definitions hold. Let $uv=(uv_1,\ldots,uv_m)$. Define $\hat g=\hat g(uv,v)$ by $\hat g(uv,v)=\sum_{i=1}^m \hat g_i(uv_i,v)$, then we can write $\hat f$ as

\eq{
\hat f(u,v) &= \hat h(uv,u) +\hat g(uv,v).
}

This shows that the proposition holds for formal series.

\subsubsection*{Smooth step} 
\label{ssub:smooth_step}
By Borel's lemma \cite{Brocker}, there exist smooth functions $h=h(uv,u)$ and $g=g(uv,v)$ (whose formal series expansions are $\hat h$ and $\hat g$ respectively) such that

\eq{
f=h+g+R,
}

where $R$ (reminder) is a flat function. We now show the following.

\begin{proposition}\label{prop:smooth_step}
Let $u\in\R$, $v\in\R^m$, and $R(u,v)$ be a smooth flat function at $(0,0)\in\R\times\R^m$. There exist flat functions $r_0=r_0(uv,u)$ and $r_1=r_1(uv,v)$ such that
\eq{
R=r_0+r_1.
}
\end{proposition}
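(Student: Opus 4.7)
The plan is to use a homogeneous partition of unity around the origin: we split $R$ into finitely many flat pieces, each supported in a region where one coordinate direction dominates, and on each piece perform a singular change of variables whose loss of regularity is absorbed by the flatness of $R$. Concretely, on $\R^{m+1}\setminus\{0\}$ I would construct a smooth $0$-homogeneous partition of unity $1=\theta_0(u,v)+\sum_{i=1}^m\theta_i(u,v)$ subordinate to the cover
\eq{
U_0=\left\{|v|<\tfrac{3}{4}|u|\right\},\qquad U_i=\left\{|v_i|>c\,|(u,v)|\right\},\ i=1,\dots,m,
}
for a suitable $c>0$, obtained from a partition of unity on $S^m$ extended by homogeneity. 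Setting $R_0:=\theta_0 R$ and $R_i:=\theta_i R$ (extended by $0$ at the origin), one gets $R=R_0+\sum_i R_i$ with every $R_j$ smooth on $\R^{m+1}$ and flat at $0$: the derivatives of $\theta_j$ grow at most polynomially in $|(u,v)|^{-1}$ while those of $R$ vanish to infinite order, so each product and all of its derivatives vanish at the origin.

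Next, for $R_0$ the support inclusion $|v|\leq\tfrac{3}{4}|u|$ makes the map $(u,v)\mapsto(u,w)$ with $w:=uv$ injective on $\Supp R_0$, and $u\neq 0$ away from the origin. Define $\tilde r_0(u,w):=R_0(u,w/u)$ for $u\neq 0$ and $\tilde r_0(0,w):=0$. The chain rule writes $\partial_u^p\partial_w^q\tilde r_0$ as a finite sum of terms of the form $u^{-k}(\partial^\alpha R_0)(u,w/u)$ with $k=k(p,q)$ bounded. On the image of the support one has $|w|\leq\tfrac{3}{4}u^2$, hence $(u^2+|w/u|^2)^N\leq 2^N|u|^{2N}$; combined with the flatness bound $|\partial^\alpha R_0(u,v)|\leq C_{\alpha,N}(u^2+|v|^2)^N$ valid for every $N$, this yields $|\partial_u^p\partial_w^q\tilde r_0(u,w)|\leq C|u|^{2N-k}$, which tends to $0$ as $u\to 0$ once $N$ is chosen large enough. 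Since $\tilde r_0\in C^\infty$ on $\{u\neq 0\}$ and every derivative extends continuously to $\{u=0\}$ by $0$, the standard continuous-extension argument shows that $\tilde r_0$ is $C^\infty$ on $\R^{m+1}$ and flat at the origin. Setting $r_0(w,u):=\tilde r_0(u,w)$ gives the first summand.

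The analogous construction for each $R_i$ uses $v_i\neq 0$ on $\Supp R_i$ to define $\tilde r_{1,i}(w,v):=R_i(w_i/v_i,v)$ for $v_i\neq 0$, extended by $0$. The identity $w_i=uv_i$ together with the support inequality $|v_i|\geq c|(u,v)|$ provides the geometric decay needed for the same flatness-versus-singularity argument to go through, giving $\tilde r_{1,i}\in C^\infty(\R^{2m})$ flat at the origin. Setting $r_1(w,v):=\sum_{i=1}^m\tilde r_{1,i}(w,v)$ completes the decomposition $R=r_0(uv,u)+r_1(uv,v)$.

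The main obstacle is the smoothness verification in the second step: the substitution $v=w/u$ (and analogously $u=w_i/v_i$) is singular along $\{u=0\}$, and every $(u,w)$-derivative of $\tilde r_0$ produces arbitrarily negative powers of $u$. The crucial point is that the cone-shaped support imposed by $\theta_0$ forces $|w|\leq C u^2$, matching the quadratic decay rate of the substitution's singularity, so the flatness of $R$ provides an estimate of order $|u|^{2N}$ which dominates any fixed negative power of $u$ once $N$ is large. Without the partition-of-unity localization the naive substitution would fail to produce a smooth function; the partition exists precisely to convert the blow-up geometry alluded to in the parenthetical remark of the authors into the explicit estimates sketched above.
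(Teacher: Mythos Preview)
Your proof is correct and follows essentially the same approach as the paper's: the paper uses a blow-up $\Phi:S^m\times\R^+\to\R^{m+1}$, takes a partition of unity on $S^m$ subordinate to the regions $\{|\bar u|>\delta\}$ and $\{|\bar v_i|>\delta\}$, and in each directional chart performs the same singular substitution (their $\tilde P_0(u,V)=R_0(u,V/u)$ is exactly your $\tilde r_0$), invoking flatness to kill the negative powers of the chart variable. Your $0$-homogeneous partition of unity is precisely the radial extension of their spherical one, and your explicit $|u|^{2N-k}$ estimate makes rigorous the step the paper only sketches with ``and so on for the higher order derivatives.''
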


\begin{remark} \Cref{prop:smooth_step} together with the formal step $\hat f=\hat h+\hat g$ imply our result. 
\end{remark}

\begin{proofof}{\cref{prop:smooth_step}}

For this proof we shall use the blow-up technique. Let $\Phi:S^m\times\R^+\to\R^{m+1}$ be a blow-up map. The map $\Phi$ maps $S^m\times\left\{ 0 \right\}$ to the origin in $\R^{m+1}$. Let $\tilde R$ be a function defined by $\tilde R = R\circ\Phi$. Since $R$ is flat at the origin, the function $\tilde R$ is flat along the sphere $S^m$. We assume that the function $R=R(u,v)$ is defined on a small neighborhood $\mathcal R$ of the origin in $\R\times\R^{m}$; this neighborhood is defined as

\eq{
\mathcal R=\left\{ |u|\leq A,\, |v_i|\leq B_i \right\},
}
for some $A,B_i$ positive scalars. Let $0<\delta<1$. The sphere $S^m$ can be partitioned into $m+1$ regions as follows:

\eq{
\mathcal{U} &= S^m\backslash\left\{ |\bar u|\leq \delta \right\}\\
\mathcal{V}_i &= S^m\backslash\left\{ |\bar v_i|\leq \delta \right\},
}

where $(\bar u,\bar v)=(\bar u,\bar v_1, \ldots, \bar v_m)\in S^m$. We can then take a partition of unity to split $\tilde R$ as

\eq{\label{sp1}
\tilde R(\bar u,\bar v) = \tilde R_0(\bar u,\bar v) + \sum_{i=1}^m\tilde R_i(\bar u,\bar v),
}

where $\Supp(\tilde R_0)\subset\mathcal{U}$ and $\Supp(\tilde R_i)\subset\mathcal{V}_i$ for $i\in[1,m]$. We define as $R_0$ and $R_i$ the corresponding functions on $\R^{m+1}$ flat at the origin given by the blow-up map $\Phi$, that is $\tilde R_j=R_j\circ\Phi$, for $j=0,1,\ldots,m$. Note that  $R\to\tilde R$ is an isomorphism between the space of functions on $(u,v)\in\R^{m+1}$ flat at the origin, and the space of functions on $((\bar u,\bar v),\rho)\in S^m\times \R^+$ flat at $S^m\times\left\{ 0\right\}$. Therefore, the splitting \cref{sp1} induces the splitting

\eq{
R(u,v)= R_0(u,v) + \sum_{i=1}^m R_i(u,v)
}

of functions on $\R^{m+1}$. We will now prove that there exist flat functions $r_0$ and $r_i$ such that
\eq{
R_0(u,v) &= r_0(uv,u)\\
R_i(u,v)   &=  r_i(uv_i,v).
}

Let us detail only the case of $R_0$. The other functions are obtained in a similar way. 

The function $\tilde R_0$ has support in $\mathcal U$. We can parametrize $\mathcal U$ by the directional blow-up map $\Phi_{u}$ which reads as
\eq{
(\bar u,\bar v_1,\ldots,\bar v_m)\mapsto(\bar u,u\bar v_1,\ldots,u\bar v_m) = (u,v_1,\ldots,v_m).
}

Now, suppose that there exists a flat function $\tilde P_0$ defined by

\eq{
\tilde R_0 (u,\bar v) = \tilde P_0(u,u^2\bar v).
}

This implies that there is a function $\tilde r_0=\tilde P_0\circ\Phi_{ u}^{-1}$ such that
\eq{
R_0(u,v)=\tilde r_0(u,uv),
}

which is precisely what we want to prove. So, now we only need to show that indeed a function $\tilde P_0$ as above exists. For this let us define coordinates $(U,V_1,\ldots,V_m)$ given by

\eq{
U=u, \, V_1=u^2\bar v_1, \, \ldots, \, V_m = u^2\bar v_m,
}

and let $\tilde P_0(u,V)$ be a function defined as
\eq{
\tilde P_0(u,V)=\tilde R_0\left(\frac{V}{u^2},u\right).
}

Note that $\tilde P_0$ is flat at $(u,V)=0$. This is seen as follows. Since $\tilde R_0$ is flat along $\left\{u=0\right\}$, it follows that $\tilde P_0(0,0)=\tilde R_0|_{u=0}=0$ and 
\eq{
\parcs{\tilde P_0}{u}(0) &=\parcs{\tilde R_0}{u}|_{u=0}=0\\
\parcs{\tilde P_0}{V_i}(0) &= \frac{1}{u^2}\parcs{\tilde R_0}{\bar v_i}|_{u=0}=0,
}
and so on for the higher order derivatives.

Finally, for convenience of notation we define $r_0(uv,u)=\tilde r_0(u,uv)$, thus we can write $R_0(u,v)=r_0(uv,u)$ Following similar arguments as above we find the functions $r_i=r_i(uv_i,v)$ such that $R_i(u,v)=r_i(uv_i,v)$ for $i\in[1,m]$. Then we define $r_1(uv,v)=\sum_{i=1}^m r_i(uv_i,v)$. It follows that
\eq{
R(u,v)=r_0(uv,u)+r_1(uv,v).
}

\end{proofof}

With this last proposition we can now write the function $f$ as
\eq{
f &= h(uv,u)+g(uv,v)+R(u,v)\\
&= h(uv,u)+g(uv,v)+r_0(uv,u)+r_1(uv,v).
}

Finally, to show the lemma we define the smooth functions $f_1$, $f_2$ of the statement by
\eq{
f_1 &= h +r_0\\
f_2 &=g+r_1.
}
\end{proofof}

\subsection{Transition}\label{sec:transitions}
\newcommand{\wo}{w_{\text{out}}}

In this section we investigate the transitions for the vector field $X_{\text{sh}}^N$ computed in \cref{sec:nfs}. Relabeling the coordinates we recall that $X_{\text{sh}}^N$ reads as
\eq{
X_{\text{sh}}^N:\begin{cases}
u' &= \alpha w u \\
v_j' &= \beta_j w v_j \\
w' &= \gamma w^2\\
Z' &=-g Z,
\end{cases}
}

where $j=1,2,\ldots,m$, and where $g=g(u,v,w)$ is a $C^\ell$ function such that $g(0)=\Lambda>0$. We assume that $w\in\R^+$. For our applications, we are interested in only two particular situations.

\begin{enumerate}
\item The saddle 1 case where $\alpha=-1$, $\beta_j>0$ for all $j\in[1,m]$, and $\gamma>0$.
\item The saddle 2 case where $\alpha=1$, $\beta_j<0$ for all $j\in[1,m]$, and $\gamma<0$.
\end{enumerate}

\subsection*{Saddle 1} 
\label{sub:case_1_}

In this case we investigate the transitions of a vector field of the form
\eq{\label{eq:semihyp1}
Y:\begin{cases}
u' &= - w u \\
v_j' &= \beta_j w v_j \\
w' &= \gamma w^2\\
Z' &=-g Z,
\end{cases}
}

where the coefficients $\beta_j$, $\gamma$ are positive. Observe that the flow in the direction of $u$ and $Z$ is a contraction while it expands in all the other directions. Roughly speaking, this implies that a transition can go out at any expanding direction $v_j$ of $w$.\\

We investigate two types of transitions that are used in our applications. For this, let us define the following sections

\renewcommand{\wo}{w_{out}}
\newcommand{\vjo}{v_{j,out}}

\eq{
\Sigma_{\en} &=\left\{ (u,v,w,Z) \, | \, u=u_i\right\}\\
\Sigma_{\ex}^w &=\left\{ (u,v,w,Z) \, | \, w=\wo\right\}\\
\Sigma_{\ex}^{\pm v_j}&=\left\{ (u,v,w,Z) \, | \, v_j=\vjo\right\}.
}

In this section we compute the transitions
\eq{
\Pi^{w}  :& \Sigma_{\en}\to\Sigma_{\ex}^w\\
&(v,w,Z)\mapsto (\tilde u, \tilde v_i, \tilde Z),
}

for all $i\in[i,m]$, and
\eq{
\Pi^{\pm v_j} :& \Sigma_i\to\Sigma_{\ex}^{\pm v_j}\\
&(v,w,Z)\mapsto(\tilde u, \tilde v_i,\tilde w,\tilde Z),
}

for all $i\in[1,m]$ with $i\neq j$. 

\begin{proposition}\label{prop:tr_semihyp} Consider the vector field $Y$ given by \cref{eq:semihyp1} and  let $\Sigma_{\en}$, $\Sigma_{\ex}^w $, $\Sigma_{\ex}^{\pm v_j}$ and $\Pi^{w}$, $\Pi^{\pm v_j}$ be as above. Then 
\begin{itemize}
\item The transition $\Pi^{w}$ is given by
\eq{
\tilde u&=u\left( \frac{w}{\wo}\right)^{1/\gamma},\quad\qquad
\tilde v_i =v_i\left( \frac{\wo}{w}\right)^{\beta_i/\gamma}\\
\tilde Z &=Z\exp\left[ -\frac{\Lambda}{\gamma w}\left( 1+ \tilde\alpha w\ln(w)+w\tilde G \right)  \right]
}

where $\tilde \alpha=\tilde \alpha(uv_i^{1/\beta_i},uw^{1/\gamma})$ and $\tilde G=\tilde G(uv_i^{1/\beta_i},uw^{1/\gamma},\mu_i)$ are $C^\ell$ functions with $\mu_i=v_i^{1/\beta_i} w^{-1/\gamma}$.

\item The transition $\Pi^{\pm v_j}$ is given by
\eq{
\tilde u&=\left( \frac{v_j}{\eta_j} \right)^{1/\beta},\quad\qquad
\tilde v_i = v_i \left( \frac{\eta_j}{v_j} \right)^{\beta_i/\beta_j} ,\quad\qquad
\tilde w= w \left( \frac{\eta_j}{v_j} \right)^{\gamma/\beta_j}\\
\tilde Z &= Z\exp\left[ -\frac{\Lambda}{\gamma w} \left( 1+\tilde\alpha' w\ln(v_j)+w\tilde G'\right)\right],
}

with $i\neq j$ and where
\eq{
\tilde\alpha' &=\tilde\alpha'(uv_i^{1/\beta_i},uw^{1/\gamma})\\
\tilde G' &=\tilde G'(uv_i^{1/\beta_i},uw^{1/\gamma},\mu_w,\mu_i)
}

are $C^\ell$ functions with $\mu_w=w^{1/\gamma}v_j^{1/\beta_j}$ and $\mu_i=v_i^{1/\beta_i}v_j^{1/\beta_j}$.
\end{itemize}

\end{proposition}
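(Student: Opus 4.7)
The strategy is to integrate the $(u,v,w)$--subsystem explicitly (it is decoupled from $Z$), then reduce the $Z$--component of the transition to computing a single scalar integral $\int_0^T g(u(t),v(t),w(t))\,dt$ along the orbit. First I would note the conserved quantities: direct differentiation shows that
\eq{
u\,w^{1/\gamma}, \qquad v_j\,w^{-\beta_j/\gamma}, \qquad u\,v_j^{1/\beta_j} \nonumber
}
are all first integrals of $Y$ (the last is the ratio of the first two, raised to $1/\beta_j$). Integrating $w'=\gamma w^2$ gives $1/w(t)=1/w_0-\gamma t$, so the transition time from $w=w_0$ to $w=w_\ast$ is $T=\frac{1}{\gamma}(1/w_0-1/w_\ast)$, and the conserved quantities immediately produce $u(t)=u_0(w_0/w(t))^{1/\gamma}$ and $v_j(t)=v_{j,0}(w(t)/w_0)^{\beta_j/\gamma}$.

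For $\Pi^w$ I take $w_\ast=\wo$ directly, which gives the $\tilde u$ and $\tilde v_i$ formulas of the proposition. For $\Pi^{\pm v_j}$ the exit is characterized by $v_j=\pm \eta_j$; solving $v_{j,0}(w_\ast/w_0)^{\beta_j/\gamma}=\pm \eta_j$ yields $w_\ast=w_0(\pm\eta_j/v_{j,0})^{\gamma/\beta_j}$, and substituting into the closed-form orbits gives the remaining components $\tilde u$, $\tilde v_i$, $\tilde w$. This disposes of every component of $\Pi$ except the one along $Z$.

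For the $Z$--component, since $Z(T)=Z_0\exp(-I)$ with $I=\int_0^T g\,dt$, I change variables to $s=w(t)$ using $ds=\gamma s^2\,dt$ to obtain
\eq{
I \;=\; \int_{w_0}^{w_\ast} g\!\left(u_0(w_0/s)^{1/\gamma},\, v_0(s/w_0)^{\beta/\gamma},\, s\right)\frac{ds}{\gamma s^2}. \nonumber
}
Writing $g=\Lambda+\tilde g$ with $\tilde g(0)=0$, the $\Lambda$--part integrates to $\frac{\Lambda}{\gamma w_0}(1-w_0/w_\ast)$, which for $\Pi^w$ is $\frac{\Lambda}{\gamma w_0}(1+O(w_0))$ and for $\Pi^{\pm v_j}$ produces the corresponding $\frac{\Lambda}{\gamma w_0}$ prefactor announced in the statement. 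The remaining piece $\int \tilde g\, ds/(\gamma s^2)$ is the source of all corrections; the linear $s$--coefficient in the Taylor expansion of $\tilde g$ (i.e.\ the term $\partial_w g(0)\cdot s$) contributes $\frac{1}{\gamma}\partial_w g(0)\,\ln(w_\ast/w_0)$, which after factoring out $\Lambda/(\gamma w_0)$ gives the $\tilde\alpha\, w_0\ln(w_0)$ (respectively $\tilde\alpha'\, w_0 \ln(v_{j,0})$) contribution. All other monomials in the Taylor expansion of $\tilde g$, when evaluated on the orbit, involve powers of $s/w_0$ or $w_0/s$ and integrate to bounded $C^\ell$ expressions in the invariants.

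The main obstacle is the final step: certifying that $\tilde\alpha$ and $\tilde G$ are $C^\ell$--functions \emph{with respect to the monomials} specified, rather than merely smooth in the variables. Here I would use the partition lemma (\cref{prop:partition}) to split $\tilde g$ into pieces depending only on pairs of coordinates whose product along the orbit is the conserved quantity $u v_i^{1/\beta_i}$ or $u w^{1/\gamma}$, and separately handle the flat remainder via Borel's lemma. Each monomial $u^a v^b w^c$ of $\tilde g$, when substituted using the orbit parametrization, becomes $u_0^a v_{j,0}^b (w_0)^{a/\gamma-b\beta_j/\gamma} s^{-a/\gamma+b\beta_j/\gamma+c}$, and the resulting $s$--integral is elementary; regrouping the prefactors in terms of the invariants $u_0 v_{j,0}^{1/\beta_j}$, $u_0 w_0^{1/\gamma}$ together with the ratio $\mu_j=v_{j,0}^{1/\beta_j}w_0^{-1/\gamma}$ (respectively $\mu_w=w_0^{1/\gamma}v_{j,0}^{1/\beta_j}$ in the $\Pi^{\pm v_j}$ case) yields the stated dependence. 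The flat remainder contributes an integral of a flat function against an integrable kernel, producing a $C^\ell$--admissible correction. Putting the logarithmic term aside into $\tilde\alpha w_0\ln w_0$ (or $\tilde\alpha' w_0\ln v_{j,0}$) and the rest into $w_0\tilde G$ (or $w_0\tilde G'$) yields the exponential-type structure claimed.
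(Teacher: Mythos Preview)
Your proposal is essentially the paper's proof, with one cosmetic difference and one imprecision worth flagging.

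\textbf{Cosmetic difference.} The paper first divides $Y$ by $w$ to obtain the \emph{linear} system $u'=-u$, $v_j'=\beta_j v_j$, $w'=\gamma w$, then introduces $V_i^{\beta_i}=v_i$, $W^\gamma=w$ so that the first integrals become simply $UV_i$ and $UW$. Your choice to keep the quadratic $w'=\gamma w^2$ and change the integration variable to $s=w(t)$ (via $ds=\gamma s^2\,dt$) is an equivalent reparametrization of the same orbit; nothing is lost, though the paper's bookkeeping is slightly cleaner because the flow becomes $e^{\pm t}$ in each coordinate.

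\textbf{Imprecision.} Your identification of the logarithmic coefficient as ``$\partial_w g(0)$'' is too narrow and, as written, does not match the statement: the proposition asserts $\tilde\alpha=\tilde\alpha(uv_i^{1/\beta_i},uw^{1/\gamma})$, a genuine function of the first integrals, not a constant. A naive Taylor expansion of $\tilde g$ along the orbit in $s$ does not isolate the $1/s$ part of the integrand cleanly, because the substituted arguments $u_0(w_0/s)^{1/\gamma}$ and $v_{i,0}(s/w_0)^{\beta_i/\gamma}$ mix powers of $s$ into every monomial (and resonances $\beta_i/\gamma\in\mathbb Z$ can produce additional logs). The paper handles this by applying \cref{prop:partition} \emph{before} integrating: writing $F=F_1(UV,UW,U)+F_2(UV,UW,V,W)$, the log coefficient is exactly $F_1(UV,UW,0)+F_2(UV,UW,0,0)$, automatically a $C^\ell$ function of the invariants. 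You do invoke the partition lemma later, so your argument ultimately lands in the right place; just be aware that the partition is what \emph{produces} the correct $\tilde\alpha$, not a post-hoc regularity check.
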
 

\begin{proofof}{\cref{prop:tr_semihyp}}
We detail first the computations for the transition $\Pi^{w}$. The transition $\Pi^{\pm v_j}$ is computed in a similar way so we only highlight the key parts of the computation.

\subsubsection*{The transition $\Pi^{w}$} 
\label{ssub:the_transition_}

In this case, the time of integration is $T=\ln\left( \frac{\wo}{w} \right)^{1/\gamma}$, where $\wo=w(t)|_{\Sigma_{\ex}^w}$ and $w=w(t)|_{\Sigma_{\en}}$. This time of integration is obtained form the equation $w'=\gamma w$. We also make the assumption that $v_i\in O(w^{\beta_i/\gamma})$. This assumption appears our applications, but roughly speaking it ensures that $\tilde v_i$ is well defined when $w\to 0$.  From the form of $Y$ we evidently have
\eq{
u(T) &=\tilde u = u\left( \frac{w}{\wo}\right)^{1/\gamma}\\
v_i(T) &= \tilde v_i = v_i\left( \frac{\wo}{w}\right)^{\beta_i/\gamma}.
}

It only remains to compute the transition for the $Z$ coordinate. Let us rewrite $Y$ as follows
\eq{
u' &= -u\\
v_i &= \beta_i v_i\\
w &= \gamma w\\
Z' &= -\frac{\Lambda+ G(u,v,w)}{w}Z,
}

where $G$ is a $C^\ell$ function vanishing at the origin. Observe that we have the first integrals $u^{b_i}v_i$ and $u^\gamma w$. We shall take advantage of such a fact. We define new coordinates $(U,V,W)$ given by
\eq{
U=u, \; V_i^{\beta_i} &= v_i, \; W^{\gamma}=w.
}

In these new coordinates we have the system

\eq{
U' &= -U\\
V_i' &= V_i\\
W' &= W\\
Z' &= -\frac{\Lambda+ G(U,V^{\beta_i},W^{\gamma})}{W^{\gamma}}Z.
}

In the new coordinates, the time of integration is given as $T=\ln\left( \frac{W_o}{W} \right)$. To have an idea of the expression of $\tilde Z$, let us first study a simplified scenario.

\subsubsection*{The case $G=0$} 
\label{ssub:the_case_g_0}

Let us suppose $G=0$. Therefore we have $Z'=-\frac{\Lambda}{W^\gamma}z$, which has the solution
\eq{
Z(t)=Z(0)\exp\left( -\Lambda \int_0^t W(s)^{-\gamma}ds \right),
} 

where $W(s)=W(0)\exp(s)$. Substituting the time of integration $T$ we have
\eq{\label{g0}
Z(T)=\tilde Z &= Z\exp\left(  -\frac{\Lambda}{W^\gamma}\int_0^{\ln\left( \frac{W_o}{W} \right)} e^{-\gamma s}ds \right)\\
&=Z\exp\left(  -\frac{\Lambda}{\gamma W^\gamma}\left( 1-\left( \frac{W}{W_o} \right)^\gamma\right) \right).
}

Observe that $\tilde Z\to 0$ as $W\to 0$. Let us now study the general case. We expect that the general case $G\neq 0$ is a perturbation of \cref{g0}.

\subsubsection*{The case $G\neq 0$} 
\label{ssub:the_case_gneq_0_}

We now consider that $G\neq 0$, we have
\eq{
Z(T)=\tilde Z = Z\exp\left( I_0+I_1 \right),
}

where 
\eq{
I_0 &= -\Lambda\int_0^T\frac{1}{W(s)}ds\\
I_1 &= \int_0^T\frac{G(U(s),V(s)^{\beta_i},W(s)^{\gamma})}{W(s)^{\gamma}}ds.
}

The integral $I_0$ has already been computed above. Let us write $F(U,V,W)=\frac{G(U(s),V(s)^{\beta_i},W(s)^{\gamma})}{W(s)^{\gamma}}$. We can do this because $G(U,0,0)=0$ and $V^{\beta_i}\in O(W^{\gamma})$. Now we estimate the integral $I_1$. Using \cref{prop:partition}, we can write
\eq{
I_1=\int_0^T \left[ F_1(s)+F_2(s)\right]ds,
}

where
\eq{
F_1 &= F_1(UV_1, \, \ldots, \, UV_m,\, UW,\, U)\\
F_2 &= F_2(UV_1, \, \ldots, \, UV_m,\, UW,\, V_1,\, \ldots,\, V_m,\, W).
}

Observe that $UW$ and all the $UV_j$'s' are first integrals. Let $J_1=\int F_1$ and $J_2=\int F_2$. Then we have
\eq{
J_1 &=\int _0^T F_1(UV,UW,U(s))ds\\\\
&=\int_0^{\ln\left( \frac{W_o}{W} \right)} F_1(UV,UW,Ue^{-s})ds.
}

Let us make the change of variables $y=e^{-s}$, we obtain
\eq{
J_1=-\int_{1}^{\frac{W}{W_o}}F_1(UV,UW,Uy)\frac{dy}{y}.
}

We expand the function $F_1$ in power of $y$ that is
\eq{
F_1(UV,UW,Uy)=F_1(UV,UW,0)+O(y).
}

Then we have
\eq{
J_1 &= -\int_{1}^{\frac{W}{W_o}} \alpha_1\frac{dy}{y}+\tilde F_1,
}
where $\alpha_1=\alpha_1(UV,UW)$ and $\tilde F_1=\tilde F_1(UV,UW,Uy(T))$ is some (unknown) $C^\ell$ function. Finally we get
\eq{
J_1=\alpha_1\ln\left( \frac{W_0}{W} \right)+\tilde F_1\left(UV,UW,U\frac{W}{W_0}\right).
}

The function $\tilde F_1$ is $C^\ell$ but unknown, and $W_0$ is a fixed positive constant, then we can simplify the notation of $\tilde F_1$ as $\tilde F_1 = \tilde F_1(UV,UW)$.\\

Next we have
\eq{
J_2 &=\int_0^T F_2(UV,UW,V(s),W(s))ds\\
&=\int_0^{\ln\left( \frac{W_o}{W} \right)} F_2(UV,UW,V_1e^{\beta_1 s},\ldots,V_me^{\beta_m s},We^{\gamma s})ds.
}

Let us make the change of variables $y=e^s$. Then we obtain
\eq{
J_2=\int_1^{\frac{W_o}{W}}F_2(UV,UW,V_1y^{\beta_1},\ldots,V_my^{\beta_m},Wy^{\gamma})\frac{dy}{y}.
}

As above, we expand in powers of $y$, that is
\eq{
F_2=\alpha_2+O(y),
}

and then we have
\eq{
J_2 = \alpha_2\ln\left( \frac{W_0}{W}\right)+\tilde F_2,
}

where $\alpha_2=\alpha_2(UV,UW)$, $F_2=F_2(UV,UW,\mu_i)$ is a $C^\ell$ function with $\mu_i=V_iW^{-1}$ for all $i\in[1,m]$. Recall that since $v_i\in O(w^{\beta_i/\gamma})$ we also have that $V\in O(W)$, that is $\mu_i$ is well defined.\\

Now we can write the integral $I_1$ as
\eq{
I_1 &= J_1+J_2\\
&=\alpha_1\ln\left( \frac{W_0}{W} \right)+\tilde F_1 + \alpha_2\ln\left( \frac{W_0}{W}\right)+\tilde F_2\\
&=\alpha \ln\left( \frac{W_0}{W}\right)+\tilde F,
}

where $\alpha=\alpha(UV,UW)$ and $\tilde F=\tilde F(UV,UW,\mu_i)$ are $C^\ell$ functions. Finally we write $\tilde Z$ in the original coordinates as follows
\eq{
\tilde Z &= Z\exp(I_0+I_1)\\
&=Z\exp\left[ -\frac{\Lambda}{\gamma w}\left( 1- \frac{w}{\wo}  \right) + \frac{1}{\gamma}\alpha \ln\left( \frac{\wo}{w}\right)+\tilde F \right]\\
&=Z\exp\left[ -\frac{\Lambda}{\gamma w}\left( 1+ \tilde\alpha w\ln(w)+w\tilde G \right)  \right],
}

where $\tilde \alpha=\tilde \alpha(uv_i^{1/\beta_i},uw^{1/\gamma})$ and $\tilde G=\tilde G(uv_i^{1/\beta_i},uw^{1/\gamma},\mu_i)$ are $C^\ell$ functions with $\mu_i=v_i w^{-\beta_i/\gamma}$.


\subsubsection*{The transition $\Pi^{\pm v_j}$} 
\label{ssub:the_trainsition_pi_pm_v_j_}
In this case the time of integration is given by $T=\ln\left( \frac{\eta_j}{v_j} \right)^{1/\beta_j}$. Such a time of integration is obtained from the equation $v_j'=\beta_jv_j$. The we have
\eq{
\tilde u &= u \left( \frac{v_j}{\eta_j} \right)^{1/\beta}\\
\tilde v_i &= v_i \left( \frac{\eta_j}{v_j} \right)^{\beta_i/\beta_j}\\
\tilde w &= w \left( \frac{\eta_j}{v_j} \right)^{\gamma/\beta_j}.
}

It then only rests to compute $\tilde Z$. Following similar arguments as for the transition $\Pi^w$ we get in this case
\eq{
\tilde Z=Z\exp\left[ -\frac{\Lambda}{\gamma w} \left( 1+\tilde\alpha' w\ln(v_j)+w\tilde G'\right)\right],
}

where now 
\eq{
\tilde\alpha' &=\tilde\alpha'(uv_i^{1/\beta_i},uw^{1/\gamma})\\
\tilde G' &=\tilde G'(uv_i^{1/\beta_i},uw^{1/\gamma},\mu_w,\mu_i)
}

are $C^\ell$ functions with $\mu_w=wv_j^{-\gamma/\beta_j}$ and $\mu_i=v_iv_j^{-\beta_i/\beta_j}$.
\end{proofof}

\subsection*{Saddle 2} 
\label{sub:case2_}
\newcommand{\uo}{u_{\text{out}}}
In this case we investigate the transitions of a vector field of the form
\eq{\label{eq:semihyp2}
Y:\begin{cases}
u' &= w u \\
v_j' &= -\beta_j w v_j \\
w' &= -\gamma w^2\\
Z' &=-g Z,
\end{cases}
}

where the coefficients $\beta_j$, $\gamma$ are positive. We assume that $u\in\R^+$. Observe that now, in contrast with case 1, we only have one expanding direction, which is $u$. This makes the study of the transition easier. Due to the same reason, it is more convenient to study a transition
\eq{
\Pi^u:\Sigma_{\en}\to\Sigma_{\ex},
}

where to be general, we let $\Sigma_{\en}$ be any codimension 1 subset of $\R^{m+3}$ obtained by setting one of the coordinates $(v,w)$ to a constant and with $u<\uo$; and where
\eq{
\Sigma_{\ex}=\left\{ (\ u,\tilde v,\tilde w,\tilde Z)\, | \, \tilde u=\uo \right\}.
}

\begin{proposition}\label{prop:semihyp2} Consider the vector field $Y$ given by \cref{eq:semihyp2} and let $\Sigma_{\en}$, $\Sigma_{\ex}$ and $\Pi^{u}$ be as above. Then
\eq{
\tilde v_i &=v_i\left( \frac{u}{\uo}\right)^{\beta_i}\\
\tilde w &=w\left( \frac{u}{\uo}\right)^{\gamma}\\
\tilde Z &=Z\exp\left[ -\frac{\Lambda}{\gamma w} \left( \left( \frac{\uo}{u}\right)^\gamma -1 +\alpha w\ln(u)+w\tilde F\right) \right]
}

where $\alpha=\alpha(u^{\beta_i}v_i,u^{\gamma}w)$ and $\tilde F=\tilde F(u^{\beta_i}v_i,u^{\gamma}w,u)$ are $C^\ell$ functions.
\end{proposition}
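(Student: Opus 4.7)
The strategy is to adapt the argument of \cref{prop:tr_semihyp} (Saddle~1), exploiting the fact that here $u$ is the unique expanding direction of $Y$ while $(v_i,w)$ all contract. First, a direct computation shows that $u^{\beta_i}v_i$ and $u^\gamma w$ are first integrals of $Y$. Evaluating these invariants at the entry and at the exit $u=u_o$ immediately yields $\tilde v_i=v_i(u/u_o)^{\beta_i}$ and $\tilde w=w(u/u_o)^\gamma$ without any integration; only the $Z$-component requires real work.

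For $\tilde Z$ the plan is to introduce the normalizing change of coordinates $U=u$, $V_i^{\beta_i}=v_i$, $W^\gamma=w$ already used in Saddle~1, together with the time rescaling $d\tau=W^\gamma\,dt$. After these the system takes the form $dU/d\tau=U$, $dV_i/d\tau=-V_i$, $dW/d\tau=-W$ and $dZ/d\tau=-\left(\Lambda+G(U,V^{\beta_i},W^\gamma)\right)/W^\gamma\cdot Z$ with $G(0)=0$, and the transition time is $T=\ln(u_o/U)$. The principal contribution
\[
I_0=-\int_0^T\frac{\Lambda\,d\tau}{W^\gamma e^{-\gamma\tau}}=-\frac{\Lambda}{\gamma w}\left((u_o/u)^\gamma-1\right)
\]
already reproduces the leading factor in the claim.

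The main computation is the perturbation integral $I_1=-\int_0^T F(U(\tau),V(\tau),W(\tau))\,d\tau$ with $F=G/W^\gamma$. Applying \cref{prop:partition} exactly as in Saddle~1, and exploiting that $UV_i$ and $UW$ are first integrals of the normalized flow, split $F=F_1(UV,UW,U)+F_2(UV,UW,V,W)$. In the $F_1$ integral the substitution $y=Ue^\tau$ gives $\int_U^{u_o}F_1(UV,UW,y)\,dy/y$; expanding $F_1$ in its last slot around $y=0$ produces a $\ln(u_o/U)$ term with coefficient $F_1(UV,UW,0)$ plus a smooth remainder. In the $F_2$ integral the substitution $y=e^{-\tau}$ gives $\int_{U/u_o}^1 F_2(UV,UW,Vy,Wy)\,dy/y$, and the same expansion in $y$ again produces a $\ln(u_o/U)$ coefficient plus a smooth remainder. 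Combining and pulling back to $(u,v,w)$ via $(UV_i)^{\beta_i}=u^{\beta_i}v_i$ and $(UW)^\gamma=u^\gamma w$, and noting that $V_i=(u^{\beta_i}v_i)^{1/\beta_i}/u$ and $W=(u^\gamma w)^{1/\gamma}/u$ are expressible in the advertised arguments, gives $I_1=\alpha\ln u+\tilde F$ with $\alpha$ a $C^\ell$ function of $(u^{\beta_i}v_i,u^\gamma w)$ and $\tilde F$ a $C^\ell$ function of $(u^{\beta_i}v_i,u^\gamma w,u)$. Factoring $-\Lambda/(\gamma w)$ out of $I_0+I_1$ and absorbing the overall constant $\alpha\ln u_o$ into $\tilde F$ yields the announced form for $\tilde Z$.

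The principal obstacle is bookkeeping rather than conceptual. One must check that the $y$-expansions in both the $F_1$ and $F_2$ integrals produce logarithms in $u$ alone (and not in $v_i$ or $w$), and that the $C^\ell$ regularity with respect to the first-integral arguments $u^{\beta_i}v_i$ and $u^\gamma w$ survives the fractional-power change of coordinates $V_i=v_i^{1/\beta_i}$, $W=w^{1/\gamma}$. The whole computation follows Saddle~1 with the roles of expanding and contracting directions reversed, so the bulk of the technical work has already been done there.
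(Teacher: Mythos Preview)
Your proposal is correct and follows essentially the same approach as the paper, which simply writes down the transition time $T=\ln(u_{\text{out}}/u)$, reads off $\tilde v_i$ and $\tilde w$, and then says ``following similar arguments as in case~1'' for $\tilde Z$. Your write-up spells out those arguments in the expected way---the coordinate change $V_i^{\beta_i}=v_i$, $W^\gamma=w$, the splitting via \cref{prop:partition}, and the two $y$-substitutions---and correctly identifies why no $\mu$-type variables appear here (the contracting quantities at the exit are functions of the first integrals alone, since $V_iU$ and $WU$ are conserved and $U$ reaches the fixed value $u_{\text{out}}$).
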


\begin{proofof}{\cref{prop:semihyp2}}
We have that the time of integration is $T=\ln\left( \frac{\uo}{u}\right)$. It follows that
\eq{
\tilde v_i &= v_i\left( \frac{u}{\uo}\right)^{\beta_i}\\
\tilde w &= w\left( \frac{u}{\uo}\right)^{\gamma}.
}

It only remains to compute $\tilde Z$. Following similar arguments as in case 1 we have
\eq{
\tilde Z=Z\exp\left[ -\frac{\Lambda}{\gamma w} \left( \left( \frac{\uo}{u}\right)^\gamma -1 +\alpha w\ln(u)+w\tilde F\right) \right],
}
where $\alpha=\alpha(u^{\beta_i}v_i,u^{\gamma}w)$ and $\tilde F=\tilde F(u^{\beta_i}v_i,u^{\gamma}w,u)$ are $C^\ell$ functions.
\end{proofof}

\section*{Acknowledgments}

H.J.K is partially supported by a CONACyT PhD grant.


\end{document}